\documentclass[12pt]{amsart}
\usepackage{amscd}
\usepackage{amssymb,amsmath, amsthm}
\usepackage{thmtools}
\usepackage{hyperref}
\usepackage{mathrsfs}
\usepackage{graphicx}
\usepackage{enumitem}
\usepackage{romannum}
\usepackage{mathtools}
\usepackage[utf8]{inputenc}
\usepackage{listings}
\usepackage[capitalize]{cleveref}

\usepackage{lipsum,eso-pic,xcolor}
\usepackage{lineno}

\usepackage{tikz}
\usetikzlibrary{calc, arrows.meta, decorations.markings}
\usetikzlibrary{arrows,decorations.pathmorphing,backgrounds,positioning,fit}
\usetikzlibrary{positioning}
\hypersetup{
    colorlinks=true,
    linkcolor=blue,
    hypertexnames=false,
}

\usetikzlibrary{trees}
\usetikzlibrary{arrows}

\def\supp{\operatorname{supp}}

\def\deg{\operatorname{deg}}

\newcommand{\NN}{\mathbb{N}}

\newcommand{\G}{\mathcal{G}}

\newtheorem{lemma}{Lemma}[section]
\newtheorem{corollary}[lemma]{Corollary}
\newtheorem{theorem}[lemma]{Theorem}
\newtheorem{proposition}[lemma]{Proposition}
\newtheorem{definition}[lemma]{Definition}
\newtheorem{remark}[lemma]{Remark}
\newtheorem{notation}[lemma]{Notation}
\newtheorem{example}[lemma]{Example}

\newtheorem{setting}[lemma]{Setting}

\advance\headheight1.15pt

\newtheorem{innercustomthm}{Theorem}
\newenvironment{customthm}[1]
  {\renewcommand\theinnercustomthm{#1}\innercustomthm\itshape}
  {\endinnercustomthm}

\newtheorem{innercustomcor}{Corollary}
\newenvironment{customcor}[1]
{\renewcommand\theinnercustomcor{#1}\innercustomcor\itshape}
{\endinnercustomcor}

\begin{document}
	
	\pagenumbering{arabic}
	
	\title[Homological shift ideals]{Homological shift ideals of weighted oriented graphs} 
	
	\author[M. Kumar]{Manohar Kumar}
	\address{Department of Mathematics, Indian Institute of Technology
		Madras, Chennai, INDIA - 600036.}
	\email{manhar349@gmail.com}

    \author[J. Mondal]{Joydip Mondal$^a$}
	\address{Department of Mathematics, Indian Institute of Technology
		Kharagpur, West Bengal, INDIA - 721302.}
	\email{joydipmondal1999i@gmail.com}
	
	\author[R. Nanduri]{Ramakrishna Nanduri$^b$}
	\address{Department of Mathematics, Indian Institute of Technology
		Kharagpur, West Bengal, INDIA - 721302.}
  \email{nanduri@maths.iitkgp.ac.in}

        \thanks{$^a$ Supported CSIR-UGC PhD Fellowship, India}
        \thanks{$^b$ corresponding author}

	\thanks{AMS Classification 2020: 13D02, 16E05, 05E40}

	\maketitle
	
\begin{abstract}
    In this paper, we study the homological shift ideals of edge ideals associated with weighted oriented graphs. For a weighted oriented graph $D$, let $HS_k(I(D))$ denote the $k^{th}$ homological shift ideal of its edge ideal $I(D)$. If $D$ is vertex-splittable, then we characterize that $HS_1(I(D))$ has linear quotients if and only if $D_6$, $D_7$, and $D_8$ are not induced subgraphs of $D$. Furthermore, we show that if $I(D)$ has linear quotients, then $\sqrt{HS_k(I(D))} = HS_k(I(G))$, for all $k\geq 1$, where $G$ is the underlying simple graph of $D$. We show that if $I(D)$ has homological linear quotients, then $I(G)$ also has homological linear quotients. If $D$ is a tree, then we establish the following characterization:     
    \begin{align*}
    HS_k(I(D)) \text{ has linear quotients for all } k\geq 0 \iff ~ &G~ \text{is}  \text{ a star graph or a broom graph} \\ &\text{ and }~ D \text{ is $D_i$-free, for } i=1,2,5,6,8.
   \end{align*} 
	\end{abstract}
    
\section{Introduction}
The $\mathbb{N}^n$-graded minimal free resolution of a monomial ideal is a fundamental object in combinatorial commutative algebra, providing deep insight into the algebraic and homological properties of the ideal. Its multigraded structure reflects the underlying combinatorial features of the generators, while the syzygy modules encode the relations among them at successive homological levels. Understanding the structure of these syzygies and the organization of the associated graded minimal free resolution has therefore been a central theme of research, leading to significant interactions between commutative algebra, combinatorics, and topology. 

Let $\mathbb{K}$ be a field and $I$ be a monomial ideal in the polynomial ring 
$R = \mathbb{K}[x_1, \ldots, x_n]$. The $\mathbb{N}^n$-graded minimal free resolution of 
$I$ takes the form of an exact sequence
\[
\mathcal{F}_{\bullet} : 0 \to F_r \xrightarrow{\partial_r} \cdots 
\xrightarrow{\partial_2} F_1 \xrightarrow{\partial_1} F_0 
\xrightarrow{\partial_0} I \to 0,
\]
where $F_k = \displaystyle \bigoplus_j R^{\beta_{k,\mathbf{a}_{kj}}(I)}(-\mathbf{a}_{kj})$ for each $k \geq 0$, and $R(-\mathbf{a}_{kj})$ denotes the polynomial ring $R$ equipped with a grading shift by the integer vector $\mathbf{a}_{kj}$. The vectors $\mathbf{a}_{kj}$ 
are referred to as the $k$th \textit{multigraded shifts} of $I$, while the 
integer $r$ is known as the \textit{projective dimension} of $I$, written 
$\mathrm{pd}(I)$. The positive integers $\beta_{k,\mathbf{a}_{kj}}(I)$ are called the  $k$th-{\it multigraded Betti numbers} of $I$. 

Motivated by the goal of systematically understand multigraded shifts $\mathbf{a}_{kj}$ in 
minimal free resolution $\mathcal{F}_{\bullet}$, the notion of \emph{homological shift ideals} has emerged as a fruitful direction of research. Although the underlying ideas appeared implicitly in the work of Miller and Sturmfels \cite{MS05} in 2005, the explicit construction of ideals generated by multigraded shifts was introduced later by Bayati \emph{et al.} \cite{b18, bjt19}. The subject gained significant momentum following the influential work of Herzog \emph{et al.}\ \cite{hmmz21}, who formalized the terminology homological shift ideal and established the concept as a distinct object of study. Since then, homological shift ideals have attracted considerable attention due to their close connections with the homological and combinatorial aspects of monomial ideals.

For a non-negative integer $k$, the $k$th \textit{homological shift ideal} 
of $I$, denoted $\mathrm{HS}_k(I)$, is defined as the monomial ideal generated by all monomials of the form ${\bf x}^{\mathbf{a}_{kj}}$, where each $\mathbf{a}_{kj}$ ranges over the $k$th multigraded shifts of $I$. That is, for each $k\geq 0$, 
$$\mathrm{HS}_k(I) := ({\bf x}^{\mathbf{a}_{kj}} ~:~ \beta_{k,\mathbf{a}_{kj}}(I)\neq 0).$$
In particular, one has 
$\mathrm{HS}_0(I) = I$, and $\mathrm{HS}_k(I) = (0)$ for all $k \geq \mathrm{pd}(I)+1$. It is natural to investigate which combinatorial and homological properties are shared by the homological shift ideals $HS_k(I)$, for $k=0,\ldots,\text{pd}(I)$. Any property satisfied by every $\mathrm{HS}_k(I)$ is referred to as a \emph{homological shift property} of $I$. In particular, if each $HS_k(I)$ has linear quotients, then $I$ is said to have \emph{homological linear quotients}. 

Homological shift ideals have been studied for a broad spectrum of monomial ideals, including polymatroidal ideals \cite{s24, b18, s23, f25}, ideals of Borel type \cite{bjt19, f25}, edge ideals of graphs, bounded principal Borel ideal, and some families of monomial ideals with linear quotients \cite{cdm26, fh23, fa25, fq25, hmmz21}, and vertex cover ideals \cite{cf23, cf24}, among others. A subtle but fundamental feature of the theory is that the minimal generating set of  $\mathrm{HS}_k(I)$ does not, in general, capture all multigraded shifts occurring in the $k$th free module of the minimal free resolution of $I$. Consequently, monomial ideals with linear quotients constitute a particularly important class in the study of homological shift ideals. 

In this paper, our goal is to study the homological shift ideals of edge ideals $I(D)$ of (vertex) weighted oriented graphs $D$, with a particular focus on determining exactly when $HS_k(I(D))$ has linear quotients. We identify the exact combinatorial obstructions, expressed as forbidden induced subgraphs of $D$, that characterize the homological linear quotients of $I(D)$. We first investigate the linear quotient property of $HS_1(I(D))$ of \emph{vertex-splittable} weighted oriented graphs $D$, a class defined inductively through vertex splittings and designed to capture edge ideals with good homological properties. In this setting, we establish the following result: 

\begin{customthm}{\ref{vertex splittable}}
Let $D$ be a vertex-splittable weighted oriented graph as in \Cref{s1s2s3s4}. Then $HS_1(I(D))$ has linear quotient property if and only if $D_6, D_7, D_8$ are not induced subgraphs of $D$ as in \Cref{fig7}.
\end{customthm}

This theorem shows that, within the vertex-splittable class, the linear quotient property of $HS_1(I(D))$ is governed entirely by the avoidance of three small weighted oriented graphs $D_6, D_7, D_8$ as induced weighted oriented subgraphs a phenomenon analogous to classical forbidden-subgraph characterizations for edge ideals of simple graphs.

We then turn to the relationship between the homological shift ideals of $I(D)$ and those of the edge ideal $I(G)$ of the underlying unweighted, undirected graph $G$. A priori, orienting the edges of $G$ and assigning weights can substantially change the algebraic invariants of the resulting ideal; nevertheless, we show that when $I(D)$ has the linear quotient property, its homological shift ideals are radically determined by those of $I(G)$:

\begin{customthm}{\ref{T-radicalhomological}}
Let $D$ be a weighted oriented graph with the underlying simple graph $G$ such that $I(D)$ has linear quotient property. Then
$$\sqrt{HS_k(I(D))} = HS_k(I(G)) \;\; \text{for all } k \geq 1.$$
\end{customthm}

As an immediate consequence, the linear-quotient behavior of the homological shift ideals descends from $D$ to its underlying graph $G$:

\begin{customcor}{\ref{HS_K(G)}}
Let $D$ be a weighted oriented graph such that $I(D)$ has homological linear quotients. Then $I(G)$ has homological linear quotients.
\end{customcor}

This corollary reduces certain questions about weighted oriented graphs to the (often better understood) unweighted, undirected case, and provides a useful necessary condition: if the underlying graph $G$ fails to have homological linear quotients, then no orientation or weight of the vertex of $G$ can produce a weighted oriented graph $D$ with this property. As a consequence, we give a necessary condition for the edge ideal of a weighted oriented graph to have homological linear quotients (\Cref{complement}). Furthermore, \Cref{T-radicalhomological} yields that several important algebraic properties of $HS_k(I(D))$ are inherited by those of the underlying simple graph $G$ of $D$: if $HS_k(I(D))$ is Cohen-Macaulay or Gorenstein or sequentially Cohen–Macaulay \emph{etc.}, then $HS_k(I(G))$ is so (\Cref{sec4cor1}). 

Finally, we specialize to the case where the underlying graph $G$ is a tree, obtaining a complete characterization. Combining the forbidden-subgraph obstructions from our first result with a structural classification of the underlying simple graph, we prove:

\begin{customthm}{\ref{thm:hmologicalshift:tree}}
  Let $D$ be a weighted oriented tree with its underlying simple graph $G$. Then 
  $HS_k(I(D))$ has linear quotients for all $k\geq 0$ \;$\iff$  \;$G$ is a star graph or a broom graph as in \Cref{fig9} and $D_i$ are not induced subgraphs of $D$ for all $i\in \{1,2,5,6,8\}$.
\end{customthm}

This theorem gives a complete, purely combinatorial description of when \emph{all} homological shift ideals of $I(D)$ simultaneously have linear quotients in the tree case: the underlying tree must be one of exactly two shapes a star or a broom and the orientation and weights must avoid a specific finite list of forbidden induced weighted oriented subgraphs $D_1, D_2, D_5, D_6, D_8$.

The paper is organized as follows. In \Cref{sec2}, we review the definitions and preliminary results needed to establish our main theorems. In \Cref{sec3}, we characterize the linear quotient property of $HS_1(I(D))$. Finally, in \Cref{sec4}, we investigate the linear quotient property of the higher homological shift ideals  $HS_k(I(D))$ and provide a characterization for weighted oriented trees.  

\section{Preliminaries} \label{sec2}
In this section, we recall some necessary prerequisites, which are used to describe our work and establish our results.\par 
\noindent
Let $D=(V(D), E(D), w)$ be a (vertex) weighted oriented graph with $V(D)=\{x_1,\ldots,x_n\}$ and underlying simple graph $G$. An edge $e\in E(D)$ is an ordered pair $e=(x_i,x_j)$ with $x_i,x_j\in V(D)$ and the orientation of $e$ is from the vertex $x_i$ to the vertex $x_j$. Also, $w:V(D)\rightarrow \NN$ is the weight function which assigns a weight for each vertex of $D$. Throughout the paper, the weight of the vertex $x_i$ is denoted by $w(x_i)$, and we abbreviate $w(x_i)$ by $w_i$. We write $V^+(D):= \{x\in V(D): w(x)>1 ~~\text{and}~~x ~\text{is not a  source vertex of $D$}\}$, in short $V^{+}(D)$ is denoted by $V^+$. For a vertex $x\in V(D)$, its {\it outer neighbourhood} is defined as 
  $N_{D}^{+}(x):= \{y \in V(D) | (x,y)\in E(D)\}$, its {\it inner neighbourhood} is defined as $N_{D}^{-}(x):= \{z \in V(D) | (z,x)\in E(D)\}$, and $N_D(x):=N_{D}^{+}(x) \cup N_{D}^{-}(x)=N_{G}(x)$. A vertex $x\in V(D)$ is called a {\it source} if $N_{D}^{-}(x)=\emptyset$ and $x$ is called a {\it sink} if $ N_{D}^{+}(x) = \emptyset $. If $x\in V(D)$ is a source, then we may assume $w(x)=1$ as it would not affect the ideal $I(D)$. For a subset $A\subseteq V(D)$, we denote the induced subgraph of $D$ on $A$ by $D[A]$. If $x\in V(D)$ is a vertex, then we write $D\setminus x$ to denote the graph $D[V(D)\setminus \{x\}]$.
  
 Now, let us discuss some notations and definitions regarding simple graphs. A graph $G$ is said to be a {\it complete graph} if there is an edge between each pair of vertices of $G$ and a complete graph on $n$ vertices is denoted by $K_{n}$. A {\it clique} of a graph $G$ is a set $A$ of vertices of $G$ such that $G[A]$ is a complete graph. A set of vertices $C$ of $G$ is called a {\it vertex cover} of $G$ if $C\cap e\neq \emptyset$ for all $e\in E(G)$. A {\it minimal vertex cover} is a vertex cover that is minimal with respect to inclusion. The \textit{complement} of a simple graph $G$, denoted by $G^c$, is the simple graph such that $V(G^c)=V(G)$ and $E(G^c)=\{\{u,v\}\mid \{u,v\}\not\in E(G)\}$. A cycle of length $n$, denoted by $C_n$, is a connected graph such that every vertex of $C_n$ has degree two. A simple graph $G$ is called {\it chordal} if $G$ has no induced cycle of length greater than three. A graph $G$ is called {\it co-chordal} if $G^c$ is chordal. A weighted oriented graph $D$ is said to be a cycle or complete or chordal or co-chordal if its underlying simple graph is so. 
 
Throughout this paper, let $R=\mathbb{K}[x_1,\ldots,x_n]$ be the polynomial ring in $n$ variables over a field $\mathbb{K}$ and $I$ be a monomial ideal of $R$. We denote by $\mathcal{G}(I)$ the unique minimal set of monomial generators of $I$ and by $\sqrt{I}$ the radical of $I$. For a monomial $u=x_1^{a_1}\cdots x_n^{a_n}$, the \emph{support} of $u$ is defined by $\supp(u)=\{\,x_i\mid a_i>0\,\}$.

\begin{definition}{\rm
    A monomial ideal $I \subseteq R$ has linear quotient property if there exists an order $ u_1 < \cdots < u_m$ on the minimal monomial generating set $\mathcal{G}(I)=\{u_1,\ldots,u_m\}$ of $I$ such that the colon ideal $((u_1, \ldots, u_{i-1}) : u_i)$ is generated by a subset of the variables, for $i = 2, \ldots, m$.
    }
\end{definition}

\begin{lemma}\cite[Lemma 2.1]{jz10}\label{L-increasing-admissible}
Let $I \subseteq R$ be a monomial ideal with linear quotients. Then there is a degree increasing admissible order of $\mathcal{G}(I)$ such that $I$ has linear quotients with respect to this degree increasing order.
\end{lemma}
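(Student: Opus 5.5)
The plan is to start from an arbitrary linear-quotient order $u_1<\cdots<u_m$ of $\mathcal{G}(I)$ and rearrange it, one exchange at a time, into a degree-increasing order. The key tool is the standard characterization of linear quotients: for each $i\ge 2$ and each $j<i$ there must exist $k<i$ with $u_k:u_i = x_\ell$ a single variable and $x_\ell \mid u_j:u_i$. Here $u:v$ denotes $u/\gcd(u,v)$, and $(u_1,\ldots,u_{i-1}):u_i$ is generated by the monomials $u_j:u_i$.

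I would first prove an exchange lemma. Suppose $u_{i}$ and $u_{i+1}$ are adjacent in the order with $\deg u_i>\deg u_{i+1}$. Then swapping them should preserve linear quotients.

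\begin{itemize}
\item For every position other than $i$ and $i+1$ the set of predecessors is unchanged, so those colon ideals stay the same.
\item For $u_{i+1}$, now moved to position $i$, the predecessors are a subset of the old ones. Every old linear generator $x_\ell = u_k:u_{i+1}$ with $k<i$ is still available. The only lost generator is $u_i:u_{i+1}$, which was needed only if it was a variable. That would force $\deg u_i \le \deg u_{i+1}+1$. Here I need to check that a strictly larger degree cannot occur in a way that breaks anything; dropping a single generator of the colon ideal can only remove a variable generator, and the remaining colon ideal is still generated by the other monomials $u_j:u_{i+1}$, each divisible by some available variable. The main care is a generator $u_j:u_{i+1}$ whose witnessing variable came from $k=i$. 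In that case $x_\ell=u_i:u_{i+1}$ divides $u_j:u_{i+1}$. Since $\deg u_i>\deg u_{i+1}$ and $u_i:u_{i+1}=x_\ell$, we get $u_i = x_\ell u_{i+1}$, contradicting minimality of $\mathcal{G}(I)$. So this case never happens.
\item For $u_i$, now at position $i+1$, its old colon ideal $J$ gains the generator $u_{i+1}:u_i$. I must show $u_{i+1}:u_i$ lies in $J$, or is divisible by a variable already in $J$. Apply the criterion at the old position of $u_{i+1}$ with $j=i$: there is $k\le i$ with $u_k:u_{i+1}=x_\ell$ and $x_\ell\mid u_i:u_{i+1}$. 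As above, $k=i$ is impossible. So $k<i$, and $u_k \mid x_\ell u_{i+1}$. Then $u_k:u_i$ divides $x_\ell u_{i+1}:u_i$. Since $x_\ell$ divides $u_i$ more times than $u_{i+1}$ does, this equals $u_{i+1}:u_i$. Hence $u_{i+1}:u_i\in J$.
\end{itemize}

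With the exchange lemma in hand, bubble sort by degree, swapping only adjacent pairs in which the degree strictly decreases. This yields a degree-increasing order with linear quotients, and it keeps the relative order within each degree, so the resulting order is admissible in the sense of \cite{jz10}.

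The step I expect to be most delicate is the third bullet, the new colon ideal for $u_i$. It is the only place where the degree hypothesis interacts with minimality of the generators, through excluding $u_i=x_\ell u_{i+1}$. That computation is the one I would verify most carefully.
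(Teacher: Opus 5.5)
Your proof is correct. The paper gives no argument of its own but imports the lemma from Jahan--Zheng, and their proof is essentially this same adjacent-transposition argument: a generator of larger degree sitting immediately before one of smaller degree cannot contribute a variable to the colon (otherwise $u_i=x_\ell u_{i+1}$), and the witnessing variable $x_\ell=u_k:u_{i+1}$ with $k<i$ yields $u_{i+1}:u_i\in (u_1,\ldots,u_{i-1}):u_i$. Two cosmetic points: since $\gcd(u_i,u_{i+1})$ properly divides $u_{i+1}$, a variable $u_i:u_{i+1}$ forces $\deg u_i\le \deg u_{i+1}$ outright (not merely $\le \deg u_{i+1}+1$), which excludes $k=i$ at once; and preserving the relative order within each degree is harmless but not needed for how the lemma is used in this paper.
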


\begin{lemma}\cite[Lemma 3.2]{fmr25}\label{sum}
  Let $x\in X$ be a variable, and let $I_1\subseteq R=\mathbb{K}[X]$ and
$I_2\subseteq \mathbb{K}[X\setminus\{x\}]$ be monomial ideals with linear quotients such that
$I_2\subseteq I_1$. Suppose that $ \mathcal{G}(xI_1)\subseteq \mathcal{G}(I) $.
Then $ I = xI_1 + I_2$ has linear quotients.
\end{lemma}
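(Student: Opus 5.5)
The plan is to prove the statement directly by exhibiting a linear-quotient order for $I = xI_1 + I_2$. Take linear-quotient orders $u_1<\cdots<u_r$ on $\mathcal{G}(I_1)$ and $v_1<\cdots<v_s$ on $\mathcal{G}(I_2)$. Because $I_2\subseteq I_1$ and the generators of $I_2$ do not involve $x$, no element $v_j$ is divisible by any $xu_i$. Together with the hypothesis $\mathcal{G}(xI_1)\subseteq\mathcal{G}(I)$, this gives
\[
\mathcal{G}(I)=\{xu_1,\ldots,xu_r\}\cup\{v_1,\ldots,v_s\}.
\]
I would order these generators as $xu_1<\cdots<xu_r<v_1<\cdots<v_s$.

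For the first block, $(xu_1,\ldots,xu_{i-1}):xu_i=(u_1,\ldots,u_{i-1}):u_i$. This is generated by variables because $I_1$ has linear quotients.

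For $v_j$, the colon ideal is
\[
J_j=(xI_1+(v_1,\ldots,v_{j-1})):v_j = (xI_1:v_j)+((v_1,\ldots,v_{j-1}):v_j).
\]
Since $v_j\in I_1$, we have $I_1:v_j=R$, and therefore $x\in xI_1:v_j$. Conversely, $xI_1:v_j\subseteq (x)$, because $x\nmid v_j$. Hence $xI_1:v_j=(x)$. The second summand is generated by variables by the linear-quotient property of $I_2$, after extending to $R$, which is flat. Therefore $J_j=(x)+(\text{variables})$ is generated by variables.

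The only point needing care is the identity $xI_1:v_j=(x)$. Here I would check with monomials: a monomial $m$ lies in $xI_1:v_j$ iff $x$ divides $mv_j$ and $mv_j/x\in I_1$. As $x\nmid v_j$, this forces $x\mid m$. Conversely, $x\cdot v_j\in xI_1$. The remaining steps are routine bookkeeping on minimal generators, so no real obstacle is expected.
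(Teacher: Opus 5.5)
Your proof is correct. You order $xu_1<\cdots<xu_r$ before $v_1<\cdots<v_s$, and the key identity $xI_1:v_j=(x)$ follows from $x\nmid v_j$ together with $v_j\in I_2\subseteq I_1$. The paper does not prove this lemma itself; it only quotes \cite[Lemma 3.2]{fmr25}, and your argument is the standard one behind that lemma and behind the fact that vertex splittable ideals have linear quotients.
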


\begin{definition}\label{spl1}{\rm 
A monomial ideal $I\subseteq R=\mathbb{K}[X]$ is called {\it vertex splittable} if it can be obtained by the following recursive procedure.
\begin{enumerate}
    \item If $v$ is a monomial and $I=(v)$, $I=(0)$ or $I=R$, then $I$ is vertex splittable.
    \item If there is a variable $x$ in $R$ and vertex splittable ideals $I_1$ and $I_2$ in $\mathbb{K}[X\setminus \{x\}]$ such that $I=xI_1+I_2$, $I_2 \subseteq I_1 $ and $\mathcal{G}(I)= \mathcal{G}(xI_1)\sqcup \mathcal{G}(I_2)$, then $I$ is a vertex splittable. For $I=xI_1+I_2$, the variable $x$ is said to be a {\it splitting variable} for $I$.
\end{enumerate}
}
\end{definition}

\begin{definition}
A monomial ideal $I$ of $R$ is called \emph{weakly polymatroidal} if for every two monomials $v=x_1^{b_1}\cdots x_n^{b_n}<_{\mathrm{lex}}u=x_1^{a_1}\cdots x_n^{a_n}$
belonging to $\mathcal{G}(I)$ such that $a_1=b_1,\;\ldots,\;a_{t-1}=b_{t-1}\quad\text{and}\quad a_t>b_t$ for some $t$, there exists $j>t$ such that $x_t\left(\frac{v}{x_j}\right)\in I$.
\end{definition}
Note that, in the above definition, the lexicographic monomial order
$<_{\mathrm{lex}}$ on $R$ is induced by the ordering $x_1>x_2>\cdots>x_n$
of the variables.

The following proposition follows from \cite[Proposition 1.7]{cf23} and \cite[Proposition 2.7]{hmmz21}.

\begin{proposition}\label{betti split}
 Let $I=xI_1+I_2$ be a vertex splittable ideal. Then
$$HS_k(I)=xHS_k(I_1)+xHS_{k-1}(I_2)+HS_k(I_2), ~\text{for all} ~k\geq 0.$$   
\end{proposition}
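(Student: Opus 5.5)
The plan is to derive the formula from the Betti splitting of a vertex splittable ideal, reading off multigraded shifts degree by degree. Let $I=xI_1+I_2$ with $I_1,I_2\subseteq \mathbb{K}[X\setminus\{x\}]$, $I_2\subseteq I_1$, and $\mathcal{G}(I)=\mathcal{G}(xI_1)\sqcup\mathcal{G}(I_2)$. By \cite[Proposition 1.7]{cf23}, this decomposition is a Betti splitting: for every $k\geq 0$ and every multidegree $\mathbf{a}$,
$$\beta_{k,\mathbf{a}}(I)=\beta_{k,\mathbf{a}}(xI_1)+\beta_{k,\mathbf{a}}(I_2)+\beta_{k-1,\mathbf{a}}(xI_1\cap I_2).$$
Since $x$ does not divide any generator of $I_2$ and $I_2\subseteq I_1$, we get $xI_1\cap I_2=xI_2$. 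I would check this directly on monomials. A monomial $m$ lying in both ideals is divisible by some generator $u\in\mathcal{G}(I_2)$ and by $xv$ with $v\in I_1$. Because $x\nmid u$, the monomial $m$ is divisible by $\mathrm{lcm}(u,x)=xu$, so $m\in xI_2$. The reverse inclusion $xI_2\subseteq xI_1\cap I_2$ is immediate from $I_2\subseteq I_1$.

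Next I would translate Betti numbers into shifts. Multiplication by $x$ is an isomorphism of modules that shifts multidegrees by $e_x$, so $\beta_{k,\mathbf{a}}(xJ)=\beta_{k,\mathbf{a}-e_x}(J)$ for any monomial ideal $J$. Consequently $HS_k(xJ)=xHS_k(J)$. Since every Betti number in the displayed formula is nonnegative, a multidegree $\mathbf{a}$ contributes to $HS_k(I)$ exactly when it is a $k$th shift of $xI_1$, or of $I_2$, or a $(k-1)$th shift of $xI_2$.

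Taking the ideals generated by the monomials $\mathbf{x}^{\mathbf{a}}$ over these three sets of shifts gives
$$HS_k(I)=xHS_k(I_1)+HS_k(I_2)+xHS_{k-1}(I_2).$$
For $k=0$ the convention $HS_{-1}=0$ applies, and the formula reduces to $I=xI_1+I_2$. This is essentially the content of \cite[Proposition 2.7]{hmmz21}.

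The main point needing care is the Betti splitting itself, which is where \cite{cf23} is used: it rests on the maps $\mathrm{Tor}(xI_1\cap I_2)\to \mathrm{Tor}(xI_1)\oplus\mathrm{Tor}(I_2)$ being zero. If I had to verify this directly, I would argue by multidegree. For each $\mathbf{a}$ with $x$-exponent at least $1$, compare with $xI_1$ via the Koszul or Taylor complex and use induction on the vertex splittable structure. The hypothesis $\mathcal{G}(I)=\mathcal{G}(xI_1)\sqcup\mathcal{G}(I_2)$ together with the linear quotients of $I_1$ and $I_2$ are the inputs I expect to make those maps vanish.
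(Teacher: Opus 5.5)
Your proposal is correct and takes the same route as the paper, which simply states that the formula follows from the multigraded Betti splitting of \cite[Proposition 1.7]{cf23} together with \cite[Proposition 2.7]{hmmz21}. You unpack those citations correctly: you verify $xI_1\cap I_2=xI_2$ and $HS_k(xJ)=xHS_k(J)$, and your closing paragraph on checking the splitting directly is unnecessary, since the cited result already gives the vanishing of the Tor maps.
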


Let $I$ be a monomial ideal with $\mathcal{G}(I)=(u_1,\ldots,u_m)$. Suppose that $I$ has quotients with respect to the ordering $u_1<\cdots<u_m$.
Now, we denote
\[
\operatorname{set}_I(u_j) := \left\{\, k \in [n] \;\middle|\; x_k \in (u_1, \ldots, u_{j-1}) : u_j \right\}, \quad j = 1, \ldots, m.
\]

\begin{lemma}\cite[Lemma 1.5]{ht02} \label{L-homologicalshift}
Let $I$ be a monomial ideal in $R$ having linear quotients. Then    \[
\mathrm{HS}_k(I)
=
\left( x_F\,u \;\middle|\; u \in \G(I),\; |F| = k,\; F \subseteq \operatorname{set}_I(u) \right).
\]
\end{lemma}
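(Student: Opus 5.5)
The plan is the standard iterated mapping-cone argument of Herzog and Takayama. I will also track multidegrees so that every shift $x_F u$ is seen to actually occur.

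\textbf{Setup.} By \Cref{L-increasing-admissible} we may assume the linear quotient order $u_1<\cdots<u_m$ is degree increasing. I expect the sets $\operatorname{set}_I(u_j)$ to be unchanged by this reordering in the situations where the lemma is used; if not, we simply take $\operatorname{set}_I$ with respect to the degree increasing order. Put $I_j=(u_1,\ldots,u_j)$. Since $(I_{j-1}:u_j)$ is generated by the variables $x_k$, $k\in\operatorname{set}_I(u_j)$, we have a short exact sequence of $\mathbb{N}^n$-graded modules
$$0\to \bigl(R/(I_{j-1}:u_j)\bigr)(-\mathbf{a}_j)\xrightarrow{\cdot u_j} R/I_{j-1}\to R/I_j\to 0,$$
where $\mathbf{x}^{\mathbf{a}_j}=u_j$. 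The left module is resolved by the Koszul complex on $\{x_k : k\in\operatorname{set}_I(u_j)\}$. Its $i$th free module therefore has basis elements of multidegree $x_F u_j$ with $F\subseteq \operatorname{set}_I(u_j)$ and $|F|=i$.

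\textbf{Induction on $j$.} We prove simultaneously two statements.

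(a) $\beta_{i,\mathbf{b}}(I_j)$ equals the number of pairs $(u_l,F)$ with $l\le j$, $F\subseteq\operatorname{set}_I(u_l)$, $|F|=i$ and $x_Fu_l=\mathbf{x}^{\mathbf{b}}$.

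(b) $\operatorname{Tor}_i(\mathbb{K},I_j)$ lives in total degrees at most $\deg u_j+i$.

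Apply $\operatorname{Tor}(\mathbb{K},-)$ to the sequence above. The key point is that the connecting map
$$\operatorname{Tor}_{i}\bigl(\mathbb{K},R/(I_{j-1}:u_j)\bigr)(-\mathbf{a}_j)\to \operatorname{Tor}_{i}(\mathbb{K},R/I_{j-1})$$
vanishes, by a degree count. The source sits in total degree $\deg u_j+i$. By (b) for $j-1$, and using $\operatorname{Tor}_i(R/I_{j-1})=\operatorname{Tor}_{i-1}(I_{j-1})$, the target lies in degrees at most $\deg u_{j-1}+i-1<\deg u_j+i$, since the order is degree increasing. Hence the long exact sequence splits into short exact sequences in each multidegree, and the Betti numbers add.

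Concretely, the mapping cone is minimal, and
$$\beta_{i,\mathbf{b}}(I_j)=\beta_{i,\mathbf{b}}(I_{j-1})+\#\{F\subseteq\operatorname{set}_I(u_j): |F|=i,\ x_Fu_j=\mathbf{x}^{\mathbf{b}}\}.$$
This gives (a) for $j$, and (b) follows immediately as well. The base case $j=1$ is trivial.

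\textbf{Conclusion.} Taking $j=m$ and $i=k$, the $k$th multigraded shifts of $I$ are exactly the multidegrees of the monomials $x_Fu$ with $u\in\mathcal{G}(I)$, $F\subseteq\operatorname{set}_I(u)$ and $|F|=k$. Therefore $HS_k(I)$, which by definition is generated by all such shifts, equals the displayed ideal. Note that the list may contain redundant generators, but this does not affect the equality of ideals.

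The main obstacle is the vanishing of the connecting homomorphism, i.e.\ minimality of the iterated mapping cone. Without a degree increasing order, the multidegree $x_Fu_j$ could coincide with some $x_Gu_l$ for $l<j$, and a unit entry could occur. This is why I would first pass to a degree increasing order via \Cref{L-increasing-admissible} and carry the regularity bound (b) through the induction.
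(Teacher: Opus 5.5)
Your mapping-cone induction is correct for a \emph{degree-increasing} linear quotient order. The Koszul resolution of $R/(I_{j-1}:u_j)$, the degree bound taken from (a), and the resulting vanishing of $\operatorname{Tor}_i(\mathbb{K},R/(I_{j-1}:u_j))(-\mathbf{a}_j)\to\operatorname{Tor}_i(\mathbb{K},R/I_{j-1})$ are the standard Herzog--Takayama argument behind the cited lemma. (That map is induced by the injection $\cdot u_j$; it is not a connecting map.)

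The gap is the reduction to that case. In the statement, $\operatorname{set}_I(u)$ is computed in whatever order witnesses linear quotients, and these sets do depend on the order: for $(x_1,x_2)$ the nonempty set sits on $x_2$ or on $x_1$ according to the order. \Cref{L-increasing-admissible}, as quoted, only gives the existence of \emph{some} degree-increasing linear quotient order and says nothing about the sets. So ``I expect the sets to be unchanged'' is unproved, and the fallback of recomputing $\operatorname{set}_I$ in the new order proves a weaker statement. Where the paper fixes a degree-increasing order first (Setting~\ref{s1s2s3s4}, \Cref{T-radicalhomological}) your version suffices, but it is not the lemma as stated.

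Your closing diagnosis is also inaccurate. For unsorted orders, coincidences $x_Fu_j=x_Gu_l$ with $l<j$ do occur. An example is $I=(x_1x_3,x_2x_3,x_1x_2x_4,x_3x_4)$ in this order, where $x_1x_2\cdot x_3x_4=x_3\cdot x_1x_2x_4$. But such coincidences never give unit entries, by the argument below.

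The fix is to pass specifically to the stable sort by degree and show that it preserves $Q_j:=(u_1,\ldots,u_{j-1}):u_j$ for every $j$.

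(i) If $u_p:u_j$ is a variable $x_k$, then $u_p=x_k\gcd(u_p,u_j)$. So $\deg u_p\le\deg u_j+1$, with equality only if $u_j\mid u_p$, which is impossible; hence $\deg u_p\le\deg u_j$. Thus every predecessor contributing a generator of $Q_j$ stays before $u_j$, and moving higher-degree predecessors behind $u_j$ loses nothing.

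(ii) If $l>j$ and $\deg u_l<\deg u_j$, then $u_l:u_j\in Q_j$. Take $l$ minimal violating this.
\begin{itemize}
\item Since $u_j:u_l\in Q_l$, there is $q<l$ with $u_q:u_l=x_r$ and $x_r\mid u_j:u_l$.
\item From $u_q\mid x_ru_l$ and $\deg_{x_r}u_j>\deg_{x_r}u_l$ we get $u_q:u_j\mid (x_ru_l):u_j=u_l:u_j$.
\item By (i), $\deg u_q\le\deg u_l<\deg u_j$, so $q\neq j$.
\item If $q<j$, then $u_q:u_j\in Q_j$ by definition. If $j<q<l$, then $u_q:u_j\in Q_j$ by minimality of $l$.
\end{itemize}
Either way $u_l:u_j\in Q_j$, a contradiction.

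Hence the sorted order is a degree-increasing linear quotient order with the same $\operatorname{set}_I(u)$ for every $u$. Your induction applies to it verbatim, so the formula holds for the original order. As a by-product, the iterated mapping cone along any linear quotient order has the minimal multigraded ranks, and is therefore minimal.
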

Note that the support of all minimal generators of $HS_k(I)$ has the same cardinality.

\begin{lemma}\cite[Lemma 3.1]{kmn25}\label{V+}
     Let $D$ be a weighted oriented graph. If $D_i$ are not induced subgraphs of $D$ for $i\in \{1,\ldots,4\}$,  then  $|N_D^+(x)\cap V^+|\leq 1$, for all $x\in V(D)$.
\end{lemma}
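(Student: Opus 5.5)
The plan is to argue by contradiction, directly from the definitions. Suppose some vertex $x\in V(D)$ satisfies $|N_D^+(x)\cap V^+|\geq 2$, and pick two distinct vertices $y,z\in N_D^+(x)\cap V^+$. By definition of $V^+$ we have $w(y)>1$ and $w(z)>1$. We also have $(x,y),(x,z)\in E(D)$. Note that $y$ and $z$ automatically fail to be sources, since $x\in N_D^-(y)\cap N_D^-(z)$. So the only genuine constraint on them is the weight condition. The goal is to show that the induced weighted oriented subgraph $D[\{x,y,z\}]$, or a slight enlargement of it, is one of $D_1,\ldots,D_4$. That contradicts the hypothesis.

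The key step is a case split on the induced structure of $\{x,y,z\}$.
\begin{enumerate}
\item If $\{y,z\}\notin E(G)$, then $D[\{x,y,z\}]$ is an oriented path $y\leftarrow x\rightarrow z$ whose two end vertices have nontrivial weight.
\item If $\{y,z\}\in E(G)$, then after relabelling $y,z$ we may assume $(y,z)\in E(D)$. Then $D[\{x,y,z\}]$ is an oriented triangle with $x\to y$, $x\to z$, $y\to z$ and $w(y),w(z)>1$.
\end{enumerate}
Each of these cases is further refined by the status of $x$: either $x$ is a source, so that we may take $w(x)=1$, or $x$ has an in-neighbour and its weight is relevant. In the second situation, one would adjoin an in-neighbour $u\in N_D^-(x)$ and examine the induced subgraph on $\{u,x,y,z\}$, subdividing according to the adjacencies of $u$ with $y$ and $z$. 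I expect these subcases to line up exactly with the forbidden graphs $D_1,\ldots,D_4$, whose figures appear later.

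The main obstacle is matching each case precisely against the pictures of $D_1,\ldots,D_4$, which are not available at this point. In particular, I need to check that the weight conventions in those figures (for example, ``$w>1$'' on the two heads and an unrestricted or trivial weight on $x$) are stated loosely enough that every configuration above contains one of them as an induced weighted oriented subgraph. If the figures only prescribe the three-vertex configurations, the argument stops at the first case split. If they involve an in-neighbour of $x$, the refinement described above is needed. Either way, the conclusion $|N_D^+(x)\cap V^+|\leq 1$ follows for every $x\in V(D)$.
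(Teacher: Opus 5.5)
Your first case split already is the whole proof. The gap is that you never carry out the identification it depends on, so the closing ``either way, the conclusion follows'' is an expectation rather than an argument.

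The identification is immediate:
\begin{enumerate}
\item In Case 1, $D[\{x,y,z\}]$ is $y\leftarrow x\rightarrow z$ with $w(y),w(z)>1$ and $w(x)$ arbitrary. This is $D_2$: the centre $x_2$ has unrestricted weight and both out-neighbours have weight $>1$.
\item In Case 2, $D[\{x,y,z\}]$ is the transitive triangle with source $x$, middle vertex $y$ and sink $z$, where $w(y),w(z)>1$ and $w(x)$ is arbitrary. This is $D_4$.
\end{enumerate}
$D_2$ leaves the weight of its centre unconstrained, and $D_4$ leaves the weight of its source unconstrained. So neither $w(x)$ nor whether $x$ is a source of $D$ plays any role, and you should drop the refinement through an in-neighbour $u\in N_D^-(x)$. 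That refinement could not have produced the match anyway: a four-vertex induced subgraph on $\{u,x,y,z\}$ is never one of the three-vertex graphs $D_1,\ldots,D_4$. Only $D_2$ and $D_4$ are used; $D_1$ and $D_3$ are not needed.

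Your worry about matching against the pictures does hide one concrete trap. In Figure~\ref{fig1}, the edge between $x_1$ and $x_2$ of $D_2$ is drawn as $x_1\to x_2$. Read literally, none of $D_1,\ldots,D_4$ is an out-star. Then the graph $y\leftarrow x\rightarrow z$ with $w(y),w(z)>1$ would be $D_1,\ldots,D_4$-free yet have $|N_D^+(x)\cap V^+|=2$, a counterexample to the lemma. The paper, however, uses $D_2$ as the out-star $x_1\leftarrow x_2\rightarrow x_3$ with $w(x_1),w(x_3)>1$, for example in the proofs of Proposition~\ref{star} and Theorem~\ref{thm:hmologicalshift:tree}. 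The lemma is imported from \cite{kmn25} without proof and holds under that reading. With it, and with the two identifications above written in, your argument is complete.
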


For each $n\geq 6$, let $H_n$ be the graph with vertex set $V(H_n)=\{x_1,\ldots,x_n\}$
and edge set
\begin{align*}
E(H_n)= {}&
\{x_ix_{i+1}: i\in[n-5]\} 
{}\cup \{x_{n-4}x_{n-2},\,x_{n-2}x_{n-3}\} \\
&{}\cup \{x_{n-1}x_j: j\in[n]\setminus\{1,n-1\}\} 
{}\cup \{x_nx_j: j\in[n]\setminus\{n-3,n\}\}.
\end{align*}

\begin{theorem}\cite[Theorem 5.1]{cdm26}\label{Hs_k(I(G))}
If the edge ideal $I(G)$ has homological linear quotients, then $G$ is co-chordal and $H_n^c$-free for every $n\geq 6$.
\end{theorem}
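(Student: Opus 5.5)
The plan is to treat the two conclusions separately, using that $HS_0(I(G))=I(G)$ and that restricting to induced subgraphs is compatible with taking homological shifts.

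\emph{Co-chordality.} If $I(G)$ has homological linear quotients, then in particular $HS_0(I(G))=I(G)$ has linear quotients. Since $I(G)$ is generated in degree $2$, it then has a $2$-linear resolution. By Fr\"oberg's theorem this happens exactly when $G^c$ is chordal, so $G$ is co-chordal.

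\emph{Reduction to an induced subgraph.} Assume, for a contradiction, that $H_n^c$ is an induced subgraph $G[A]$ of $G$, where $|A|=n$. The multigraded Betti numbers of $I(G[A])$ are exactly the multigraded Betti numbers of $I(G)$ in squarefree degrees supported in $A$. Hence $HS_k(I(G[A]))$ is the restriction of $HS_k(I(G))$ to the variables in $A$, that is, the ideal generated by those minimal generators whose support lies in $A$. For co-chordal $G$ all these ideals are squarefree and equigenerated. For such ideals, linear quotients pass to restrictions: take the induced order, and note that a colon which is generated by variables stays generated by variables after discarding the generators that use variables outside $A$. The only point to check is that the witnessing generator $u_j$ for a variable $x_k$ in the colon can be chosen with $\supp(u_j)\subseteq A$; since $x_k u_i\in (u_j)$ forces $\supp(u_j)\subseteq \supp(u_i)\cup\{x_k\}$, I expect this to be a short argument. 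So it suffices to show that some $HS_k(I(H_n^c))$ fails to have linear quotients.

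\emph{Computing the shifts.} For co-chordal $G$, Hochster's formula together with chordality of $G^c$ gives the following for $|B|=k+2$:
\[
\beta_{k,B}(I(G))\neq 0 \iff G^c[B] \text{ is disconnected}.
\]
So $HS_k(I(H_n^c))$ is generated by the $x_B$ with $|B|=k+2$ and $H_n[B]$ disconnected. I will take $k=n-4$. Then the generators are $x_{[n]\setminus\{a,b\}}$, where $\{a,b\}$ runs over the separating pairs of $H_n$; call the graph on $[n]$ formed by these pairs $\Gamma$. For ideals generated by complements of edges, a generator $u_i=x_{[n]\setminus e}$ compared with an earlier $u_j=x_{[n]\setminus e'}$ gives a colon contribution that is:
\begin{itemize}
\item a single variable when $e$ and $e'$ share a vertex;
\item a degree-$2$ monomial when they are disjoint.
\end{itemize}
Such a degree-$2$ contribution must then be covered by an earlier edge meeting $e$ in a vertex of $e'$. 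Thus linear quotients holds iff the edges of $\Gamma$ can be ordered so that every edge $e$ disjoint from an earlier edge $e'$ has an earlier neighbour edge through one of the endpoints of $e'$.

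\emph{Main obstacle.} The hard step is the explicit combinatorics of $H_n$. One has to list its separating pairs and verify that $\Gamma$ admits no such ordering, for instance because $\Gamma$ has two disjoint components with edges, or because some forced first pair blocks the condition. The vertices $x_{n-1}$ and $x_n$ are adjacent to almost everything, and the path $x_1\cdots x_{n-4}$ hangs off the triangle-like piece on $x_{n-4},x_{n-3},x_{n-2}$. So I expect the separating pairs to be $\{x_{n-1},x_n\}$ together with pairs that cut the pendant end $x_1$ or the vertex $x_{n-3}$, and these should give the obstruction. If $k=n-4$ turns out to admit a valid ordering, I would instead try the smallest case $n=6$ directly, find the failing $k$ there, and then propagate it along the path using the same complement-of-separator description.
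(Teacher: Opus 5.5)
The paper contains no proof of this statement; it is quoted from \cite[Theorem 5.1]{cdm26}, so there is no internal argument to compare with, and your proposal has to stand on its own. Your framework is sound, and it uses tools the paper itself uses elsewhere:
\begin{itemize}
\item Fr\"oberg's theorem gives co-chordality.
\item $HS_k(I(G[A]))$ is generated by the minimal generators of $HS_k(I(G))$ supported on $A$, as in \Cref{I(D')}.
\item Linear quotients pass to such restrictions, which is \Cref{linear quotients} iterated.
\end{itemize}
In the contradiction argument, $G[A]\cong H_n^c$ is automatically co-chordal, since it is an induced subgraph of a co-chordal graph. So describing $HS_k(I(H_n^c))$ through disconnected induced subgraphs of $H_n$ is legitimate without checking chordality of $H_n$ separately.

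The genuine gap is that the decisive step is left as an expectation, and your one concrete prediction is false. The pair $\{x_{n-1},x_n\}$ is not separating, because $H_n\setminus\{x_{n-1},x_n\}$ is the path $x_1x_2\cdots x_{n-4}x_{n-2}x_{n-3}$.

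The missing computation is short. The vertices $x_{n-1}$ and $x_n$ are adjacent, $x_{n-1}$ is adjacent to every vertex except $x_1$, and $x_n$ is adjacent to every vertex except $x_{n-3}$. Hence a separating pair contains exactly one of $x_{n-1},x_n$. If it contains $x_{n-1}$, it must isolate $x_{n-3}$; if it contains $x_n$, it must isolate $x_1$. Since $N_{H_n}(x_1)=\{x_2,x_n\}$ and $N_{H_n}(x_{n-3})=\{x_{n-2},x_{n-1}\}$, the only separating pairs are $\{x_2,x_n\}$ and $\{x_{n-2},x_{n-1}\}$. These are disjoint for $n\ge 6$. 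Therefore
\[
HS_{n-4}(I(H_n^c))=\bigl(x_{[n]\setminus\{2,n\}},\; x_{[n]\setminus\{n-2,n-1\}}\bigr).
\]
The two colons are $x_{n-2}x_{n-1}$ and $x_2x_n$, so neither order is a linear quotient order.

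For $n=6$ this ideal is $(x_1x_3x_4x_5,\,x_1x_2x_3x_6)$. You can confirm it directly from $I(H_6^c)=(x_1x_3,x_1x_4,x_1x_5,x_2x_3,x_3x_6)$ and \Cref{L-homologicalshift}. With this computation, your choice $k=n-4$ works and no fallback is needed.

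You should also correct your ordering criterion. If $u_j=x_{[n]\setminus e'}$ precedes $u_i=x_{[n]\setminus e}$, then $u_j:u_i=x_{e\setminus e'}$. So when $e\cap e'=\emptyset$ the colon is $x_e$, and it must be covered by an earlier edge meeting $e$, not $e'$. Here $\Gamma$ consists of two disjoint edges, so the second edge meets no earlier edge in either order, and the conclusion is unaffected.
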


\section{First Homological shift of edge ideals of weighted oriented graphs} \label{sec3}
In this section, we study the first homological shift ideal $HS_1(I(D))$. We first show that, if $HS_1(I(D))$ has linear quotients, then certain weighted oriented graphs cannot be induced subgraphs of $D$. We then prove that, for a vertex-splittable weighted oriented graph $D$, $HS_1(I(D))$ has linear quotients if and only if $D$ is $\mathcal{F}$-free, where $\mathcal{F}$ is the family of weighted oriented graphs as in \Cref{fig7}.


\begin{lemma}\label{linear quotients}
  Let $I \subseteq R$ be a monomial ideal. For a variable $x \in R$, consider the ideal $I'$ such that $\mathcal{G}(I')=\mathcal{G}(I)\setminus \{\,u\in \mathcal{G}(I): x\mid u\,\}$.
Then $I'$ has linear quotients if $I$ has linear quotients.
\end{lemma}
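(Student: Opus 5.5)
We plan to take a linear quotient order on $\mathcal{G}(I)$ and restrict it to the subset $\mathcal{G}(I')$. Two preliminary observations are needed.

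First, $\mathcal{G}(I')$ really is the minimal generating set of the ideal it generates. It is a subset of the antichain $\mathcal{G}(I)$ under divisibility, so it is again an antichain and generates $I'$ minimally.

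Second, write $u_1<\cdots<u_m$ for a linear quotient order of $\mathcal{G}(I)$. Let $v_1<\cdots<v_s$ be the induced order on the generators not divisible by $x$. So $v_j=u_{i_j}$ with $i_1<\cdots<i_s$. We must show that for each $j\ge 2$ the colon ideal $J_j:=(v_1,\ldots,v_{j-1}):v_j$ is generated by variables.

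For a monomial ideal $(w_1,\ldots,w_r)$ and a monomial $v$, the colon ideal is generated by the monomials $w_l/\gcd(w_l,v)$. Set $i=i_j$ and $L_i:=(u_1,\ldots,u_{i-1}):u_i$; by hypothesis $L_i$ is generated by variables. We will prove that $J_j$ is generated by the variables $x_k\in L_i$ with $x_k\neq x$ and $x_k\in J_j$. For this it suffices to show the following: every generator $w:=v_l/\gcd(v_l,v_j)$ of $J_j$ with $l<j$ is divisible by some variable $x_k$ that lies in $J_j$.

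Fix such an $l$ and such a $w$. Since $v_l=u_{i_l}$ with $i_l<i$, we have $w\in L_i$. So some variable $x_k\in\mathcal{G}(L_i)$ divides $w$. As $x\nmid v_l$, we get $x\nmid w$, and hence $x_k\neq x$.

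The main obstacle is to show that this $x_k$ actually lies in $J_j$, not merely in $L_i$. Since $x_k\in L_i$, there is some $p<i$ with $u_p/\gcd(u_p,u_i)=x_k$. The worry is that this $u_p$ might be divisible by $x$. But $x\nmid u_i=v_j$, so $x$ does not divide $\gcd(u_p,u_i)$. Then if $x\mid u_p$, the variable $x$ would divide the quotient $u_p/\gcd(u_p,u_i)=x_k\neq x$, which is impossible. Hence $x\nmid u_p$, so $u_p=v_q$ for some $q<j$, and therefore $x_k\in J_j$.

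Consequently $J_j$ is generated by the variables of $\mathcal{G}(L_i)$ other than $x$. In particular every colon ideal of $I'$ is generated by variables, so $I'$ has linear quotients with respect to the induced order.
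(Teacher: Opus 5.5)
Your proposal is correct and is essentially the paper's argument. Both proofs restrict a linear quotient order of $\mathcal{G}(I)$ to $\mathcal{G}(I')$ and observe that a witness $u_p$ with $u_p:u_i$ a single variable cannot be divisible by $x$: otherwise that variable would have to be $x$, which cannot divide a colon coming from a generator of $I'$. Your version additionally identifies the colon ideal of $I'$ exactly, as the ideal generated by the variables of $(u_1,\ldots,u_{i-1}):u_i$ other than $x$.
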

\begin{proof}
Let $I=(u_1,\ldots,u_m)$ has linear quotients with respect to the order $u_1<\cdots <u_m$. Assume that $I'=(u_{i_1},\ldots,u_{i_k})$ with $u_{i_1}<\cdots <u_{i_k}$.

\noindent
\underline{Claim:} $u_{i_1}<\cdots <u_{i_k}$ be a linear quotients order of $I'$.

Let $u_{i_m},u_{i_r}\in \mathcal{G}(I')$ with $u_{i_m}<u_{i_r}$. If $u_{i_m}:u_{i_r}$ is a variable, then nothing to show. Assume $u_{i_m}:u_{i_r}$ is not a variable. Then we will show that there exists $i_{l} \geq i_1$ such that $i_{l}<i_r$ and $u_{i_l}:u_{i_r}$ is a variable which divides $u_{i_m}:u_{i_r}$. Since $I$ has a linear quotients then there exist $j<i_r$ such that $u_{j}:u_{i_r}$ is a variable, which divides $u_{i_m}:u_{i_r}$. If $u_j\in \mathcal{G}(I')$, then we are done. Assume $u_j\notin \mathcal{G}(I')$ this means that $x\mid u_j$. Since $u_j:u_{i_r}$ is a variable and
$x\in \operatorname{Supp}(u_j)\setminus \operatorname{Supp}(u_{i_r})$, we have
$u_j:u_{i_r}=x$. Consequently, $x$ divides $u_{i_m}:u_{i_r}$, contradicting the fact that
$u_{i_m}\in \mathcal{G}(I')$. Therefore, $u_j\in \mathcal{G}(I')$, which completes the proof. 
\end{proof}

\begin{figure}[htbp]
\centering
\resizebox{\textwidth}{!}{%
\begin{tikzpicture}[
    vertex/.style={circle,draw,fill=white,minimum size=3pt,inner sep=0pt},
    every node/.style={font=\normalsize},
    wlabel/.style={font=\small},
    midarrow/.style={
        postaction={decorate},
        decoration={markings, mark=at position 0.5 with {\arrow{Stealth[length=3mm,width=2mm]}}}
    },
    midarrowrev/.style={
        postaction={decorate},
        decoration={markings, mark=at position 0.5 with {\arrow{Stealth[length=3mm,width=2mm]}}}
    },
    thick
]
\node at (-1.1,0) {$D_1=$};
\node[vertex] (d1-1) at (0,0) {};
\node[vertex] (d1-2) at (2.2,0) {};
\node[vertex] (d1-3) at (4.4,0) {};
\draw[midarrow] (d1-1) -- (d1-2);
\draw[midarrow] (d1-2) -- (d1-3);
\node[above=2pt] at (d1-1) {$x_1$};
\node[above=2pt] at (d1-2) {$x_2$};
\node[above=2pt] at (d1-3) {$x_3$};
\node[wlabel,above=16pt] at (d1-1) {$w(x_1)$};
\node[wlabel,above=16pt] at (d1-2) {$w(x_2)>1$};
\node[wlabel,above=16pt] at (d1-3) {$w(x_3)>1$};

\begin{scope}[xshift=7.3cm]
\node at (-1.1,0) {$D_2=$};
\node[vertex] (d2-1) at (0,0) {};
\node[vertex] (d2-2) at (2.2,0) {};
\node[vertex] (d2-3) at (4.4,0) {};
\draw[midarrowrev] (d2-1) -- (d2-2);
\draw[midarrow] (d2-2) -- (d2-3);
\node[above=2pt] at (d2-1) {$x_1$};
\node[above=2pt] at (d2-2) {$x_2$};
\node[above=2pt] at (d2-3) {$x_3$};
\node[wlabel,above=16pt] at (d2-1) {$w(x_1)>1$};
\node[wlabel,above=16pt] at (d2-2) {$w(x_2)$};
\node[wlabel,above=16pt] at (d2-3) {$w(x_3)>1$};
\end{scope}

\begin{scope}[yshift=-3.6cm]
\node at (-1.1,0.6) {$D_3=$};
\node[vertex] (d3-1) at (0,0) {};
\node[vertex] (d3-2) at (1.5,1.8) {};
\node[vertex] (d3-3) at (3,0) {};
\draw[midarrow] (d3-1) -- (d3-2);
\draw[midarrow] (d3-2) -- (d3-3);
\draw[midarrowrev] (d3-1) -- (d3-3);
\node[left=2pt] at (d3-1) {$x_1$};
\node[above=2pt] at (d3-2) {$x_2$};
\node[right=2pt] at (d3-3) {$x_3$};
\node[wlabel,above=16pt] at (d3-2) {$w(x_2)>1$};
\node[wlabel,below left=6pt and -4pt] at (d3-1) {$w(x_1)>1$};
\node[wlabel,below right=6pt and -4pt] at (d3-3) {$w(x_3)>1$};
\end{scope}

\begin{scope}[xshift=7.3cm,yshift=-3.6cm]
\node at (-1.1,0.6) {$D_4=$};
\node[vertex] (d4-1) at (0,0) {};
\node[vertex] (d4-2) at (1.5,1.8) {};
\node[vertex] (d4-3) at (3,0) {};
\draw[midarrow] (d4-1) -- (d4-2);
\draw[midarrowrev] (d4-2) -- (d4-3);
\draw[midarrow] (d4-1) -- (d4-3);
\node[left=2pt] at (d4-1) {$x_1$};
\node[above=2pt] at (d4-2) {$x_2$};
\node[right=2pt] at (d4-3) {$x_3$};
\node[wlabel,above=16pt] at (d4-2) {$w(x_2)>1$};
\node[wlabel,below left=6pt and -4pt] at (d4-1) {$w(x_1)$};
\node[wlabel,below right=6pt and -4pt] at (d4-3) {$w(x_3)>1$};
\end{scope}

\begin{scope}[xshift=3cm,yshift=-6.3cm]
\node at (-1.1,0) {$D_5=$};
\node[vertex] (d5-1) at (0,0) {};
\node[vertex] (d5-2) at (2.2,0) {};
\node[vertex] (d5-3) at (4.4,0) {};
\node[vertex] (d5-4) at (6.6,0) {};
\draw[midarrow] (d5-1) -- (d5-2);
\draw[midarrow] (d5-2) -- (d5-3);
\draw[midarrow] (d5-3) -- (d5-4);
\node[above=2pt] at (d5-1) {$x_1$};
\node[above=2pt] at (d5-2) {$x_2$};
\node[above=2pt] at (d5-3) {$x_3$};
\node[above=2pt] at (d5-4) {$x_4$};
\node[wlabel,above=16pt] at (d5-1) {$w(x_1)$};
\node[wlabel,above=16pt] at (d5-2) {$w(x_2)=1$};
\node[wlabel,above=16pt] at (d5-3) {$w(x_3)>1$};
\node[wlabel,above=16pt] at (d5-4) {$w(x_4)=1$};
\end{scope}
\end{tikzpicture}%
}
  \caption{$D_1,D_2,D_3,D_4,D_5$ can not be induced subgraph of $D$ whose edge ideal has linear quotients.}
    \label{fig1}
\end{figure}

\begin{corollary}\label{corollary-2..5}
    Let $D$ be a weighted oriented graph. Suppose $I(D)$ has linear quotients. Then $D_1, D_2, D_3,D_4$ and $D_5$ as in Figure \ref{fig1} cannot be induced subgraphs of $D$.
\end{corollary}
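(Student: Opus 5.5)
The plan is to reduce to the small graphs themselves. First I show that the linear quotient property of $I(D)$ passes to $I(D[A])$ for every induced subgraph $D[A]$. Then I check by hand that none of $I(D_1),\dots,I(D_5)$ has linear quotients.

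\emph{Reduction to induced subgraphs.} Recall $I(D)=(x_ix_j^{w_j} : (x_i,x_j)\in E(D))$, with the convention that sources have weight $1$. For a vertex $x$, the elements of $\mathcal{G}(I(D))$ not divisible by $x$ are exactly the generators $x_ix_j^{w_j}$ of the edges of $D\setminus x$. Since $D\setminus x$ is induced, a vertex keeps its weight and the orientations of its remaining edges. Any vertex that becomes a source in $D\setminus x$ can be assigned weight $1$ without changing the ideal, because a source never appears with exponent $w_j$. Hence the ideal $I'$ of \Cref{linear quotients} equals $I(D\setminus x)$, and \Cref{linear quotients} says that $I(D\setminus x)$ has linear quotients whenever $I(D)$ does. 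Deleting the vertices outside $A$ one at a time shows that $I(D[A])$ has linear quotients for every $A\subseteq V(D)$. So it suffices to prove that $I(D_i)$ has no linear quotients for $i=1,\dots,5$.

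\emph{Direct verification.} For each $D_i$ there are at most three generators. I run over all possible last elements $u$ of an order and compute the colon $(\text{others}):u$ using $v:u=v/\gcd(v,u)$. If this colon is fine, I then examine the two-element ideal formed by the remaining generators.
\begin{itemize}
\item $D_1$: $I=(x_1x_2^{w_2},x_2x_3^{w_3})$. The two colons are $x_1x_2^{w_2-1}$ and $x_3^{w_3}$, and neither is a variable since $w_2,w_3>1$.
\item $D_2$: $I=(x_2x_1^{w_1},x_2x_3^{w_3})$. The colons are $x_1^{w_1}$ and $x_3^{w_3}$, neither a variable.
\item $D_3$: by the cyclic symmetry, take $u=x_1x_2^{w_2}$ last. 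The colon is $(x_3^{w_3},\,x_3x_1^{w_1-1})$, which is not generated by variables.
\item $D_4$: $I=(x_1x_2^{w_2},x_3x_2^{w_2},x_1x_3^{w_3})$.
  \begin{itemize}
  \item If $x_1x_2^{w_2}$ is last, the colon is $(x_3)$, which is fine. But then $\{x_3x_2^{w_2},x_1x_3^{w_3}\}$ has colons $x_2^{w_2}$ and $x_1x_3^{w_3-1}$, neither a variable.
  \item If $x_3x_2^{w_2}$ is last, the colon is $(x_1)$. But then $\{x_1x_2^{w_2},x_1x_3^{w_3}\}$ has colons $x_2^{w_2}$ and $x_3^{w_3}$.
  \item If $x_1x_3^{w_3}$ is last, the colon is $(x_2^{w_2})$.
  \end{itemize}
\item $D_5$: $I=(x_1x_2,\,x_2x_3^{w_3},\,x_3x_4)$, where $x_1$ may have weight $1$ as a source.
  \begin{itemize}
  \item If $x_1x_2$ is last, the colon contains $x_3^{w_3}$ and is not linear.
  \item If $x_3x_4$ is last, the colon is $(x_1x_2,\,x_2x_3^{w_3-1})$, not linear since $w_3>1$.
  \item If $x_2x_3^{w_3}$ is last, the colon is $(x_1,x_4)$. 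But the remaining pair $x_1x_2,x_3x_4$ has a colon of degree $2$.
  \end{itemize}
\end{itemize}
In every case some colon ideal fails to be generated by variables, so none of these ideals has linear quotients.

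The only delicate step is the identification $I'=I(D\setminus x)$ in the reduction, namely that the weights and source conventions behave correctly under deletion. The rest is routine bookkeeping. The case checks need care with exponents, for instance $x_2x_3^{w_3}:x_3x_4=x_2x_3^{w_3-1}$ and not $x_2$.
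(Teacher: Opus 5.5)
Your proof is correct and follows the paper's route: reduce to the graphs $D_i$ themselves via Lemma~\ref{linear quotients} (your identification of $I'$ with $I(D\setminus x)$ is exactly what the paper uses implicitly), and then check that no ordering of $\mathcal{G}(I(D_i))$ gives linear quotients. The only difference is that the paper does this hand check only for $D_5$ and cites \cite[Corollary 3.3]{kns25} for $D_1,\dots,D_4$, while your case analysis makes that part self-contained; it uses the intended reading of Figure~\ref{fig1}, in which both edges of $D_2$ leave $x_2$, $D_3$ is the directed $3$-cycle, and $x_2$ is the sink of $D_4$.
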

\begin{proof} By \cite[Corollary 3.3]{kns25}, we have $D_1, D_2, D_3,D_4$ cannot be induced subgraphs of $D$.
  Let $D_5$ be a weighted oriented graph as in Figure \ref{fig1}. Then $I(D_5)=(x_1x_2,x_2x_3^{w_3},x_3x_4)$. It is easy to see that $I(D_5)$ does not have linear quotients for any order of its generators. By \Cref{linear quotients}, we have $D_5$ cannot be an induced subgraph of $D$. 
\end{proof}

For a monomial ideal $I$ and a monomial $m$, we denote by $I^{\le m}$ the ideal generated by all minimal monomial generators of $I$ that divide $m$. In particular, $I^{\le m}$ is a monomial subideal of $I$. 
\begin{proposition}\label{I(D')}
    Let $D$ be a weighted oriented graph such that $I(D)$ has linear quotients. Let $D'$ be an induced weighted oriented subgraph of $D$. Let $k\geq 1$. If $HS_k(I(D))$ has linear quotients, then $HS_k(I(D'))$ has linear quotients.
\end{proposition}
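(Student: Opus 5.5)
The plan is to identify $HS_k(I(D'))$ with the ideal obtained from $HS_k(I(D))$ by discarding every minimal generator divisible by a variable outside $V(D')$. Once that is done, repeated application of \Cref{linear quotients} finishes the proof.

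Let $W=V(D')$ and let $m$ be the lcm of $\mathcal{G}(I(D'))$. Because $D'$ is an induced subgraph, a generator $x_ix_j^{w_j}$ of $I(D)$ has support in $W$ exactly when $(x_i,x_j)\in E(D')$. Since $m$ is a monomial in the variables of $W$, a generator of $I(D)$ divides $m$ if and only if its support lies in $W$. Hence $I(D)^{\le m}=I(D')$. I will then invoke the standard restriction principle for multigraded Betti numbers: for any monomial ideal $I$ and monomial $m$, one has $\beta_{k,\mathbf a}(I^{\le m})=\beta_{k,\mathbf a}(I)$ whenever $\mathbf x^{\mathbf a}\mid m$, and $\beta_{k,\mathbf a}(I^{\le m})=0$ otherwise. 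This follows from Hochster's/lcm-lattice description, since $\beta_{k,\mathbf a}$ only depends on the generators dividing $\mathbf x^{\mathbf a}$.

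Every multigraded shift of $I(D)$ is an lcm of some generators, because shifts occur in the Taylor resolution. So a shift of $I(D)$ whose support lies in $W$ is an lcm of generators supported in $W$, hence divides $m$. Therefore the $k$th shifts of $I(D')$ are precisely the $k$th shifts of $I(D)$ with support in $W$. I next compare minimal generators. Let $\mathbf x^{\mathbf a}\in\mathcal G(HS_k(I(D')))$, and suppose a $k$th shift $\mathbf x^{\mathbf b}$ of $I(D)$ divides it. Then $\mathbf x^{\mathbf b}$ is supported in $W$, so it is a shift of $I(D')$, and minimality forces $\mathbf b=\mathbf a$. Thus $\mathbf x^{\mathbf a}\in\mathcal G(HS_k(I(D)))$. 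The converse inclusion works the same way. This gives
\[
\mathcal G(HS_k(I(D')))=\{u\in \mathcal G(HS_k(I(D))) : x\nmid u \text{ for all } x\notin W\}.
\]

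Now list the vertices $y_1,\dots,y_s$ outside $W$. I will apply \Cref{linear quotients} successively with $x=y_1$, then $x=y_2$, and so on. At each step the resulting ideal has linear quotients, and the minimal generating set of each step is the previous one with the generators divisible by $y_t$ removed. After $s$ steps the generating set is exactly the one displayed above, so $HS_k(I(D'))$ has linear quotients.

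The main obstacle is justifying the restriction principle and the fact that shifts are lcms of generators. If a self-contained argument is wanted, I would instead use the hypothesis that $I(D)$ has linear quotients. Then \Cref{L-homologicalshift} describes shifts as $x_Fu$ with $F\subseteq\operatorname{set}(u)$. One can check that restricting the linear quotient order of $I(D)$ to generators supported in $W$ gives an order for $I(D')$ whose sets are $\operatorname{set}(u)\cap W$, using the argument in the proof of \Cref{linear quotients}. This yields the displayed identity directly.
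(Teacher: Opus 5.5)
Your proposal is correct and follows essentially the paper's route: identify $I(D)^{\le m}$ with $I(D')$, use the restriction principle (the paper cites \cite[Corollary 2.10]{hmmz21}) to show that $\mathcal{G}(HS_k(I(D')))$ consists of the generators of $HS_k(I(D))$ supported in $V(D')$, and then apply \Cref{linear quotients} to remove the generators divisible by each vertex outside $V(D')$. The only real difference is how the support/divisibility equivalence is checked: the paper uses the description $x_Fu$ from \Cref{L-homologicalshift}, which is where it needs $I(D)$ to have linear quotients, whereas your Taylor-lcm argument avoids that hypothesis and also spells out the minimal-generator comparison and the vertex-by-vertex iteration that the paper leaves implicit.
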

\begin{proof}
    By \cite[Corollary 2.10]{hmmz21}, we have $HS_k(I(D'))=HS_k(I(D)^{\leq \prod_{x_i\in V(D')}x_i^{w(x_i)}}$. Let $f\in \mathcal{G}(HS_k(I(D))$. Then by \Cref{L-homologicalshift}, we have $f=x_Fu$ for some $F \subseteq \text{set}_{I(D)}(u)$ and $u\in \mathcal{G}(I(D))$. Then $$f\in HS_k(I(D')) \iff x_Fu \; | \;   \prod_{x_i\in V(D')}x_i^{w(x_i)} \iff \supp(f) \subseteq V(D').$$  
This implies that 
\begin{eqnarray*}
\mathcal{G}(HS_k(I(D'))) &=& \mathcal{G}(HS_k(I(D))) \setminus \{ g \in \mathcal{G}(HS_k(I(D))) : \supp(g) \nsubseteq V(D') \} \\ 
&=& \mathcal{G}(HS_k(I(D))) \setminus \{ g \in \mathcal{G}(HS_k(I(D))) : x_i | g \text{ for some }  x_i \in V(D) \setminus V(D') \} \\
 &=& \mathcal{G}(HS_k(I(D))) \setminus \cup_{x_i \in V(D) \setminus V(D')} \{ g \in \mathcal{G}(HS_k(I(D))) : x_i | g  \}. 
\end{eqnarray*}
Then by \Cref{linear quotients}, we have $HS_k(I(D'))$ has linear quotients.    
\end{proof}

\begin{proposition}\label{not induced}
Let $ D_6, D_7,D_8$ be weighted oriented graphs as in \Cref{fig7}. Then $HS_1(I(D_i))$ does not have linear quotients for $i\in\{6,7,8\}$. 
  \end{proposition}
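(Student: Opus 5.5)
The plan is a direct computation for each of $D_6,D_7,D_8$ (Figure \ref{fig7}). Since these graphs are small, in each case I will first write $I(D_i)$ explicitly. If $I(D_i)$ has linear quotients, I will use \Cref{L-homologicalshift} with a degree-increasing admissible order (\Cref{L-increasing-admissible}) to list $\mathcal{G}(HS_1(I(D_i)))$ as the minimal elements among $x_j u$, $u\in\mathcal{G}(I(D_i))$, $j\in\operatorname{set}(u)$. If $I(D_i)$ does not have linear quotients, I will instead use \Cref{betti split} for a suitable splitting variable, or simply compute the first syzygies. These are the lcm's of pairs of generators that are not redundant, i.e.\ $\operatorname{lcm}(u,v)$ with no third generator $w$ such that $\operatorname{lcm}(u,w)$ and $\operatorname{lcm}(w,v)$ strictly divide it, giving the Taylor-type minimal shifts in homological degree one. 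Either way the output is an explicit finite list of monomials.

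To show that such an ideal $J=HS_1(I(D_i))$ has no linear quotients, I will not check all orders. Instead I will use a necessary condition that exploits the first generator and the last generator of any putative order. By \Cref{L-increasing-admissible} we may assume the order is degree increasing. So the last generator $g$ of maximal degree must satisfy the following: for every other generator $h$ there is a generator $h'$ with $h':g$ a single variable dividing $h:g$. Symmetrically, every generator placed after the first must have some earlier generator differing from it by a variable. The same applies to every generator in turn, so I will build the "linear neighbour" graph on $\mathcal{G}(J)$, joining $h$ and $g$ when $h:g$ is a variable. I will then exhibit a pair (or small set) of generators whose colon is not covered in either relative order.

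Concretely, the key step is to find two generators $f,g\in\mathcal{G}(J)$ with the following property. Whichever of them comes later, say $g$, the set of generators $h$ with $h:g$ a variable (the only candidates available to witness linearity) has colons $h:g$ none of which divides $f:g$, and the same holds with the roles of $f$ and $g$ swapped. This pair gives the obstruction regardless of where the other generators are placed, because the candidate witnesses are determined independently of the order. If no single pair works, I will instead argue on the minimal-degree piece: these generators must come first, so the ideal they generate must itself have linear quotients, and I will check by hand that it does not (again via a pair obstruction), or use \Cref{linear quotients} to delete a variable and reduce to a smaller non-linear-quotient configuration.

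I expect the main obstacle to be the weights. The graphs in Figure \ref{fig7} have weights subject only to inequalities such as $w>1$, so the generators of $HS_1$ contain symbolic exponents and the minimality of the shifts depends on comparisons like $w_j$ versus $1$. I will handle this by treating each exponent symbolically, noting that every generator lies in a few degree patterns independent of the actual value once $w>1$. The obstruction pair will be chosen to involve the heavy vertex in such a way that the colon $f:g$ contains that vertex to the power $w-1\ge 1$ together with another variable, so that no variable colon divides it.
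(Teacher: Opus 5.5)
Your framework matches the paper's: each $I(D_i)$ has linear quotients, so $HS_1(I(D_i))$ is read off from Lemma~\ref{L-homologicalshift}, and then every order is ruled out. However, the proposal never performs the computation, and that computation is the whole proof. Putting the weight-one edge first gives the linear quotient orders $x_2x_3<x_1x_2^{w(x_2)}<x_3x_4^{w(x_4)}$, $x_1x_3<x_1x_2^{w(x_2)}<x_2x_3^{w(x_3)}$ and $x_2x_3<x_2x_1^{w(x_1)}<x_3x_4^{w(x_4)}$. In these orders the last two generators have sets $\{3\},\{2\}$, resp.\ $\{3\},\{1\}$, resp.\ $\{3\},\{2\}$. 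Hence
\begin{gather*}
HS_1(I(D_6))=(x_1x_2^{w(x_2)}x_3,\ x_2x_3x_4^{w(x_4)}),\qquad HS_1(I(D_7))=(x_1x_2^{w(x_2)}x_3,\ x_1x_2x_3^{w(x_3)}),\\
HS_1(I(D_8))=(x_1^{w(x_1)}x_2x_3,\ x_2x_3x_4^{w(x_4)}).
\end{gather*}
With exactly two minimal generators, linear quotients just means that one of the two colons is a single variable. So the linear-neighbour graph, the minimal-degree reduction, Lemma~\ref{linear quotients} and your fallback for non-linear-quotient $I(D_i)$ are never needed.

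The substantive problem is your prediction of the obstruction. You treat the weights as constrained only by $w>1$ (``independent of the actual value once $w>1$''). You then expect a colon equal to the heavy vertex to the power $w-1\ge 1$ times another variable, but that describes only $x_1x_2^{w(x_2)-1}$ in $D_6$. All other colons are pure powers:
\begin{itemize}
\item $x_4^{w(x_4)}$ in $D_6$, and $x_1^{w(x_1)}$, $x_4^{w(x_4)}$ in $D_8$; these are not variables because these weights are at least $2$;
\item $x_2^{w(x_2)-1}$ and $x_3^{w(x_3)-1}$ in $D_7$; these fail to be variables only because $w(x_2),w(x_3)>2$, which is why Figure~\ref{fig7} imposes $>2$ on $D_7$.
\end{itemize}
Under your reading the claim is false for $D_7$. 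If $w(x_2)=2$, then $x_1x_2^2x_3:x_1x_2x_3^{w(x_3)}=x_2$, so $x_1x_2^2x_3<x_1x_2x_3^{w(x_3)}$ is a linear quotient order. You need to carry out the three computations explicitly and use the hypothesis $w>2$ in the $D_7$ case.
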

\begin{proof}
Let $I(D_6)=(x_2x_3,x_1x_2^{w(x_2)},x_3x_4^{w(x_4)})$, $I(D_7)=(x_3x_1,x_1x_2^{w(x_2)},x_2x_3^{w(x_3)})$ and $I(D_8)=(x_2x_3,x_2x_1^{w(x_1)},x_3x_4^{w(x_4)})$. By \Cref{L-homologicalshift}, we have 
\begin{align*}
  HS_1(I(D_6))=(x_3x_1x_2^{w(x_2)},x_2x_3x_4^{w(x_4)})\\
  HS_1(I(D_7))=(x_3x_1x_2^{w(x_2)},x_1x_2x_3^{w(x_3)})\\
  HS_1(I(D_8))=(x_3x_2x_1^{w(x_1)},x_2x_3x_4^{w(x_4)})
\end{align*}
 It is clear that none of these ideals have linear quotients. 
\end{proof}


\begin{figure}[htbp]
\centering
\resizebox{\textwidth}{!}{%
\begin{tikzpicture}[
    vertex/.style={circle,draw,fill=white,minimum size=3pt,inner sep=0pt},
    every node/.style={font=\normalsize},
    wlabel/.style={font=\small},
    midarrow/.style={
        postaction={decorate},
        decoration={markings, mark=at position 0.5 with {\arrow{Stealth[length=4mm,width=3.5mm]}}}
    },
    midarrowrev/.style={
        postaction={decorate},
        decoration={markings, mark=at position 0.5 with {\arrow{Stealth[length=3mm,width=2mm] reversed}}}
    },
    thick
]
\node (D6label) at (-1.5,0) {$D_6=$};
\node[vertex] (d6-1) at (0,0) {};
\node[vertex] (d6-2) at (3,0) {};
\node[vertex] (d6-3) at (6,0) {};
\node[vertex] (d6-4) at (9,0) {};

\draw[midarrow] (d6-1) -- (d6-2);
\draw[midarrow] (d6-2) -- (d6-3);
\draw[midarrow] (d6-3) -- (d6-4);

\node[above=3pt] at (d6-1) {$x_1$};
\node[above=3pt] at (d6-2) {$x_2$};
\node[above=3pt] at (d6-3) {$x_3$};
\node[above=3pt] at (d6-4) {$x_4$};

\node[wlabel,above=22pt] at (d6-2) {$w(x_2)>1$};
\node[wlabel,above=22pt] at (d6-3) {$w(x_3)=1$};
\node[wlabel,above=22pt] at (d6-4) {$w(x_4)>1$};

\begin{scope}[yshift=-3.5cm]
\node at (-1.5,0) {$D_8=$};
\node[vertex] (d8-1) at (0,0) {};
\node[vertex] (d8-2) at (3,0) {};
\node[vertex] (d8-3) at (6,0) {};
\node[vertex] (d8-4) at (9,0) {};

\draw[midarrow] (d8-2) -- (d8-1);
\draw (d8-2) -- (d8-3);
\draw[midarrow] (d8-3) -- (d8-4);

\node[above=3pt] at (d8-1) {$x_1$};
\node[above=3pt] at (d8-2) {$x_2$};
\node[above=3pt] at (d8-3) {$x_3$};
\node[above=3pt] at (d8-4) {$x_4$};

\node[wlabel,above=22pt] at (d8-1) {$w(x_1)>1$};
\node[wlabel,above=22pt] at (d8-2) {$w(x_2)=1$};
\node[wlabel,above=22pt] at (d8-3) {$w(x_3)=1$};
\node[wlabel,above=22pt] at (d8-4) {$w(x_4)>1$};
\end{scope}

\begin{scope}[xshift=13cm,yshift=-3 cm]
\node at (-0.8,1.7) {$D_7=$};
\node[vertex] (d7-1) at (0,0.2) {};
\node[vertex] (d7-2) at (1.8,3.0) {};
\node[vertex] (d7-3) at (3.8,0.2) {};

\draw[midarrow] (d7-1) -- (d7-2);
\draw[midarrow] (d7-2) -- (d7-3);
\draw[midarrow] (d7-3) -- (d7-1);

\node[left=3pt] at (d7-1) {$x_1$};
\node[above=3pt] at (d7-2) {$x_2$};
\node[right=3pt] at (d7-3) {$x_3$};

\node[wlabel,above=22pt] at (d7-2) {$w(x_2)>2$};
\node[wlabel,below left=10pt and -6pt] at (d7-1) {$w(x_1)=1$};
\node[wlabel,below right=10pt and -6pt] at (d7-3) {$w(x_3)>2$};
\end{scope}
\end{tikzpicture}%
}
 \caption{$D_6,D_7,D_8$ can not be  induced subgraphs of $D$ whose homological shift ideals $HS_k(I(D))$ have linear quotients for all $k\geq 0$.}
    \label{fig7}
\end{figure}

\begin{corollary}\label{D_i not induced}
      Let $D$ be a weighted oriented graph such that the edge ideal $I(D)$ has linear quotients. Suppose $HS_1(I(D))$ has linear quotients. Then $D_i$ (as in \Cref{fig7}) are not induced subgraphs of $D$, for $i=6,7,8$. 
\end{corollary}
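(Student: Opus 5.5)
The plan is to argue by contradiction, combining \Cref{I(D')} (with $k=1$) and \Cref{not induced}. Suppose, contrary to the claim, that $D_i$ is an induced weighted oriented subgraph of $D$ for some $i\in\{6,7,8\}$. Let $D'=D_i$, viewed as the induced subgraph $D[A]$ on the corresponding vertex set $A\subseteq V(D)$, carrying the orientations and weights inherited from $D$.

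The hypotheses of \Cref{I(D')} are satisfied. Indeed, $I(D)$ has linear quotients by assumption, $D'$ is an induced weighted oriented subgraph of $D$, and $HS_1(I(D))$ has linear quotients. Hence \Cref{I(D')} gives that $HS_1(I(D'))=HS_1(I(D_i))$ has linear quotients. This contradicts \Cref{not induced}, which says that $HS_1(I(D_i))$ does not have linear quotients for $i=6,7,8$. So no $D_i$ with $i\in\{6,7,8\}$ can be an induced subgraph of $D$.

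The one point needing care is that the edge ideal of the induced subgraph is exactly the ideal treated in \Cref{not induced}. Weights are inherited from $D$, but a vertex that is a source in $D'$ need not be a source in $D$. This does not matter: in $I(D')$ the weight of a source of $D'$ never appears as an exponent, so $I(D')$ has the form written in the proof of \Cref{not induced}. The only conditions used there are the weight conditions on non-source vertices of $D_i$, such as $w(x_2)>1$, $w(x_3)=1$ and $w(x_4)>1$ for $D_6$, and these are exactly the conditions encoded in \Cref{fig7}.

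The remaining check is that the generators listed in \Cref{not induced} are really non-linear-quotient for every admissible choice of weights. Each $HS_1(I(D_i))$ has two generators $f,g$, and it suffices that neither $f:g$ nor $g:f$ is a variable. For $D_6$, $f=x_1x_2^{w_2}x_3$ and $g=x_2x_3x_4^{w_4}$, so $g:f=x_4^{w_4}$ and $f:g=x_1x_2^{w_2-1}$. Since $w_4>1$ and $w_2>1$, neither quotient is a variable. The cases $D_7$ and $D_8$ follow in the same way from their weight hypotheses. This verification is already contained in \Cref{not induced}, so the corollary follows immediately and I expect no real obstacle.
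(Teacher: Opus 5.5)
Your proposal is correct and is the paper's argument: the paper proves this corollary in one line by combining \Cref{I(D')} (with $k=1$) and \Cref{not induced}. Your remarks that a source of $D_i$ contributes no weight exponent to $I(D_i)$, and your explicit colon computation for $D_6$, are sound; they spell out details the paper leaves implicit.
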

\begin{proof} It follows from \Cref{I(D')} and \Cref{not induced}.
\end{proof}

\noindent
To characterize when the first homological shift ideal of a vertex-splittable weighted oriented graph has linear quotients, we first establish the following setting and notation, which will be used throughout the subsequent lemmas and the proof of the main theorem of this section.
\begin{setting}\label{s1s2s3s4}
Let $D$ be a weighted oriented graph with the vertex sets $V(D)=\{x_1,\ldots,x_n\}$ such that $I=I(D)$ is vertex-splittable with the splitting  $I=x_1I_1+I_2$, where $x_1$ is a variable and $I_2\subseteq I_1$. Note that $I_2=I(D\setminus x_1) \subseteq I_1$ gives that $\sqrt{I_1}=(N_D(x_1))$ and $N_D(x_1)$ is a minimal vertex cover of $D$. Since vertex splittable ideals have linear quotients, it follows from \Cref{corollary-2..5}, we have that $D_i$ are not induced subgraphs for $i\in\{1,\ldots,5\}$. Therefore by \Cref{V+}, $|N_D^+(x_1) \cap V^+|\leq 1$, then we can write $I_1=(x_{m+1},\ldots,x_\ell^{w(x_\ell)})$, for some $m$, where $w(x_\ell)\geq 1$ and $N_D(x_1)=\{x_{m+1},\ldots,x_\ell\}$. By \Cref{spl1}, $I_2$ is vertex splittable. Therefore, $I_2$ has linear quotients. Then by \Cref{L-increasing-admissible}, let $I_2=(g_1,\ldots,g_s)$ with $g_1,\ldots,g_s$ be a degree increasing linear quotient order of $I_2$. By \Cref{betti split}, we have 
\begin{equation}\label{eq-1}
  HS_1(I)=x_1(HS_1(I_1)+I_2)+HS_1(I_2)  
\end{equation}
Let $X=\{g_i \in \mathcal{G}(I_2) \mid g_i \not \in HS_1(I_1)\}= \{g_{i_1},\ldots,g_{i_p}\} \subseteq \{g_1,\ldots,g_s\}$ with $i_1<\dots<i_p$.  Note that  
    $\text{set}_{I_1}(x_i)=\{m+1,\ldots,i-1\}$ for all $i\in\{m+2,\ldots,\ell-1\}$ and $\text{set}_{I_1}(x_\ell^{w_\ell})=\{m+1,\ldots,{\ell-1}\}$. Then using \Cref{L-homologicalshift}, we get 
    $$HS_1(I_1)=(x_ix_j\mid x_i, x_j \in N_D(x_1)\setminus \{x_l\})+(x_ix_\ell^{w_\ell}\mid x_i \in N_D(x_1)\setminus \{x_l\}).$$    
Let
\begin{align*}
S' :=\,& \{g_{i_j} \in X \mid g_{i_j}=x_rx_{\ell}^{w_{\ell}} \text{ for some } x_r \in V(D)\setminus N_D(x_1)\},\\[2mm]
S'' :=\,& \{x_rx_\ell^{w_\ell}\in\mathcal G(HS_1(I_1))
          \mid x_r\in N_D(x_1)\setminus\{x_\ell\}\}, \\[2mm]
S_1 :=\,& \{x_ix_j\mid x_i, x_j \in N_D(x_1)\setminus \{x_{\ell}\}\}
       =\{u_1,\ldots,u_\alpha\}, (\mbox{say})\\[2mm]
S_2 :=\,& X \setminus S' = \{g_{i_j}\in X \mid 
           g_{i_j}=x_tx_l^{w_l} \text{ for some } x_t \in N_D(x_1) \text{ or } l\neq \ell\} \\
       =\,& \{g_{j_1},\ldots,g_{j_q}\},
       \quad \mbox{with } j_1<\cdots<j_q, (\mbox{say})\\[2mm]
       S_3 :=\,& \{x_rx_\ell^{w_\ell}\in S''
          \mid
          x_rx_\ell^{w_\ell}\notin (S_2)\}\\[2mm]
S_4 :=\,& S'
       =\{x_rx_\ell^{w_\ell}\in X
         \mid x_r\in V(D)\setminus N_D(x_1)\}.
\end{align*}
Then $HS_1(I_1)+I_2= (S_1\cup S_2\cup S_3\cup S_4)$. Moreover, $S_1\cup S_2\cup S_3\cup S_4$ is the minimal generating set of $HS_1(I_1)+I_2$ because no monomial in this union divides another. Furthermore, $I_2 \subseteq HS_1(I_1)+(S_2\cup S_4)$. 
\end{setting}

\begin{remark}\label{subset}
Suppose $x_ax_b^{w_b}\in I_2$. Let $a=\ell$ and $w_\ell\geq 2$. Then $x_\ell x_b^{w_b}\in I_2\subseteq I_1=(x_{m+1},\ldots,x_\ell ^{w_\ell})$. Since $w_\ell \geq 2$, this gives that $x_c \mid  x_\ell x_b^{w_b}$ for some $c \in \{m+1,\ldots,\ell-1\}$. This implies that $c=b$ which gives $x_b\in N_D(x_1)\setminus \{x_\ell\}$. 
\end{remark}
\noindent
We first consider the case $w_\ell=1$ and show that $HS_1(I_1)+I_2$ has linear quotients.
\begin{lemma}\label{wl-one} 
Let $D$ be a vertex splittable weighted oriented graph as in Setting~\ref{s1s2s3s4}. If $w_\ell=1$, then $HS_1(I_1)+I_2$ has linear quotients. 
\end{lemma}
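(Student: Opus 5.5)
Since $w_\ell=1$, the ideal $I_1=(x_{m+1},\ldots,x_\ell)$ is generated by variables. Hence $HS_1(I_1)$ is the squarefree Veronese-type ideal $(x_ix_j \mid x_i,x_j\in N_D(x_1),\ i\neq j)$, which is polymatroidal and in particular has linear quotients. I would realize $HS_1(I_1)+I_2$ as the degree-two-part plus the remaining generators of $I_2$, and order the generators as follows. First list all of $\mathcal{G}(HS_1(I_1))$ in a linear quotient order, say lexicographic with respect to $x_{m+1}>\cdots>x_\ell$. Then list the generators of $X=\{g_{i_1},\ldots,g_{i_p}\}$, i.e.\ those $g_i\in\mathcal{G}(I_2)$ not already in $HS_1(I_1)$, in the order inherited from the degree increasing linear quotient order $g_1<\cdots<g_s$ of $I_2$. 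This is a valid minimal generating list: with $w_\ell=1$ the sets $S''$ and $S_3$ collapse into $S_1$, so the minimal generating set is $\mathcal{G}(HS_1(I_1))\cup X$.

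The first block has linear quotients by the above. For $g=g_{i_j}\in X$, I must show that the colon $(HS_1(I_1), g_{i_1},\ldots,g_{i_{j-1}}) : g$ is generated by variables. Since $g\in I_2\subseteq I_1$, $g$ is divisible by some $x_c\in N_D(x_1)$. Moreover, $g\notin HS_1(I_1)$ forces $g$ to be divisible by \emph{exactly one} variable of $N_D(x_1)$: its support meets $N_D(x_1)$ in a single vertex $x_c$. Then $HS_1(I_1):g$ contains $x_d$ for every $x_d\in N_D(x_1)\setminus\{x_c\}$, because $x_cx_d\mid x_dg$. These are the variable generators contributed by the first block. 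Every other generator $x_ax_b\in HS_1(I_1)$ yields a colon monomial divisible by some such $x_d$, because $\{a,b\}\not\subseteq\{c\}$. So $HS_1(I_1):g=(x_d: x_d\in N_D(x_1)\setminus\{x_c\})$.

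Next take an earlier $g_{i_r}$ with $r<j$. If $g_{i_r}:g$ is divisible by some $x_d\in N_D(x_1)\setminus\{x_c\}$, it is already covered. Otherwise I use the linear quotient property of $I_2$. There is some $g_t$ with $t<i_j$ such that $g_t:g$ is a variable $x_e$ dividing $g_{i_r}:g$. If $g_t\in X$, then $t$ is some $i_{r'}$ with $r'<j$ and we are done. If $g_t\notin X$, then $g_t\in HS_1(I_1)$, so $g_t$ contains two vertices of $N_D(x_1)$. Since $g_t\mid x_eg$ and $g$ meets $N_D(x_1)$ only in $x_c$, the variable $x_e$ must lie in $N_D(x_1)\setminus\{x_c\}$. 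Thus $x_e$ is already among the first-block variables, and it divides $g_{i_r}:g$. This settles every case.

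The main obstacle is the bookkeeping in this last step. One must check that the degree increasing order of $I_2$ is compatible with deleting the elements of $\mathcal{G}(I_2)\cap HS_1(I_1)$. One must also handle the possibility that colons by generators of $I_2$ with higher weights are not squarefree, where divisibility $g_t\mid x_eg$ requires care with exponents. I would handle the exponent issue by noting that the vertices of $N_D(x_1)$ appearing in $g_t$ occur there with exponent at least one, and that the argument only uses their support.
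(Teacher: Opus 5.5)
Your proposal is correct and follows essentially the same route as the paper. You use the same ordering: all of $\mathcal{G}(HS_1(I_1))$ in a linear quotient order, then $X$ in the order inherited from the degree increasing order of $I_2$. You also use the same reduction through the linear quotient property of $I_2$, with the case split $g_t\in X$ versus $g_t\in HS_1(I_1)$. Your observation that $\operatorname{supp}(g)\cap N_D(x_1)$ is a single vertex $x_c$ makes the colon $HS_1(I_1):g=(x_d : x_d\in N_D(x_1)\setminus\{x_c\})$ explicit. This cleanly covers the small cases (such as a generator $x_ix_j$ with $i$ equal to the covering index) that the paper's ``without loss of generality'' step passes over.
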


\begin{proof}
Assume $w_\ell =1$. Note that $N_D(x_1)=\{x_{m+1},\dots,x_\ell\}$. Then 
\begin{align*}
 HS_1(I_1)=&(x_ix_j\mid x_i, x_j \in N_D(x_1)\setminus \{x_l\})+(x_ix_\ell \mid x_i \in N_D(x_1)\setminus \{x_l\}) \\
 =&(x_ix_j\mid x_i, x_j \in N_D(x_1))=(u_1\ldots,u_{\alpha})+(u_{\alpha+1}, \ldots, u_{\beta}) 
\end{align*} 
which is square-free Veronese in the variables $x_{m+1},\dots,x_\ell$, where $\{u_1\ldots,u_{\alpha}\}=S_1$. By \cite[Corollary 4.2]{hmmz21}, square-free Veronese ideals has linear quotients. Let $HS_1(I_1)$ has linear quotients with respect to the order $u_1,\ldots,u_{\beta}$. Also, we have 
\begin{align*}
HS_1(I_1)+I_2 = &(u_1\ldots,u_{\alpha})+(u_{\alpha+1}, \ldots, u_{\beta})+ (X) \\
=& (u_1\ldots,u_{\alpha})+(u_{\alpha+1}, \ldots, u_{\beta})+(g_{i_1},\ldots,g_{i_p}),
\end{align*}
which is a minimal generating set of $HS_1(I_1)+I_2$ because of degree reason, where $X$ is as in \Cref{s1s2s3s4}.  

Now, we show that this order is a linear quotient order for $HS_1(I_1)+I_2$. Note that $u_1,\dots,u_{\beta}$ is already in linear quotient order. Now, we show that $(u_1,\dots,u_{\beta}): g_{i_l}$ is generated by some variables. If $u_r:g_{i_{l}}$ is a variable for $u_r\in \mathcal{G}(HS_1(I))$, then there is nothing to show. Suppose $u_r:g_{i_{l}}$ is not a variable. Now, we will show that there exist $u_k\in \mathcal{G}(HS_1(I_1))$ such that $u_k:g_{i_l}$ is a variable which divides $u_r:g_{i_l}$. Let $u_r=x_ix_j$, for some $i,j\in \{m+1,\ldots,\ell\}$ and $g_{i_l}=x_ax_b^{w_b}$ for $1 \leq l \leq p$. Since, $g_{i_l} \in I_2\subseteq I_1=(N_D(x_1))$ then one of $x_a $ or $x_b$ is in $I_1$ because $N_D(x_1)$ is a minimal vertex cover of $D$. Without loss of generality, assume $x_a\in I_1$, that is, $a\in \{m+1,\dots,\ell\}$. Then we get $u_k := x_ax_i\in\mathcal{G}(HS_1(I_1))$ because $HS_1(I_1)$ is square-free Veronese ideals. Then $u_k:g_{i_l}=x_i$ which divides $u_r:g_{i_l}$, as required. It remains to show that $(u_1,\ldots,u_{\beta}, g_{i_1},\dots, g_{i_{j-1}}) : g_{i_{j}}$ is generated by some variables, for any $j$.  If $g_{i_{l}}:g_{i_{j}}$ is a variable for $g_{i_{l}},g_{i_{j}}\in X$, then there is nothing to show. Suppose $g_{i_{l}}:g_{i_{j}}$ is not a variable. Since $g_1,\ldots,g_s$ is a linear quotient order of $I_2$ then there exist a $t<i_j$ such that $g_t:g_{i_l}= z$ (say) is a variable, which divides $g_{i_{l}}:g_{i_{j}}$. If $t\in \{i_1,\ldots,i_{j-1}\}$, then we are done. If $t\notin \{i_1,\ldots,i_{j-1}\}$, then $g_t\in HS_1(I_1)$. This implies that $u_k\mid g_t$ for some $u_k\in \mathcal{G}(HS_1(I_1))$. Then we get $u_k:g_{i_j}$ divides  $g_t:g_{i_l}=z$ and $z$ divides $g_{i_{l}}:g_{i_{j}}$ which implies that $u_k:g_{i_j}=z$ is a variable, as required. Thus, $u_1,\ldots,u_{\beta},g_{i_1},\ldots,g_{i_p}$ is a linear quotient order for $HS_1(I_1)+I_2$. 
\end{proof}

We now consider the case $w_\ell\ge 2$. We first establish the following lemmas to prove $HS_1(I_1)+I_2$ has linear quotients.
\begin{lemma}\label{wl-two} 
Let $D$ be a vertex-splittable weighted oriented graph as in  Setting~\ref{s1s2s3s4}. Assume $w_{\ell} \geq 2$. Then 
\begin{enumerate}
\item[(1)] $((S_1):g_{j_i})$ is generated by variables for any $j_i$. 
\item[(2)] For any $g_{j_l}, g_{j_i} \in S_2$, either $g_{j_l}:g_{j_i}$ is a variable, or there exists $u_k \in S_1$ such that $u_k: g_{j_i}$ is a variable which divides $g_{j_l}:g_{j_i}$.  
\item[(3)] $(S_1 \cup S_2)$  has a linear quotient order.
\end{enumerate}
\end{lemma}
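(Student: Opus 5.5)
The three parts will be proved in order, with (3) following from (1), (2), and the fact that $S_1$ is a squarefree Veronese ideal.

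\textbf{Setup.} Recall the objects involved.
\begin{itemize}
\item $S_1=\{x_ix_j : x_i,x_j\in N_D(x_1)\setminus\{x_\ell\}\}$ is the squarefree Veronese ideal of degree two in the variables $x_{m+1},\dots,x_{\ell-1}$, so it has linear quotients by \cite[Corollary 4.2]{hmmz21}.
\item Fix the order $u_1,\dots,u_\alpha$ of $S_1$ as such a linear quotient order, followed by $g_{j_1},\dots,g_{j_q}$ in the order inherited from the degree-increasing linear quotient order $g_1,\dots,g_s$ of $I_2$.
\item Each $g_{j_i}$ lies in $X\setminus S'$, so $g_{j_i}=x_ax_b^{w_b}$ with $g_{j_i}\notin HS_1(I_1)$.
\item Since $N_D(x_1)$ is a minimal vertex cover, at least one of $x_a,x_b$ lies in $N_D(x_1)$.
\end{itemize}

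\textbf{Part (1).} It suffices to show: for each $u_r=x_ix_j\in S_1$ with $u_r:g_{j_i}$ not a variable, some $u_k\in S_1$ has $u_k:g_{j_i}$ a variable dividing $u_r:g_{j_i}$. Since $u_r:g_{j_i}$ is not a variable, $x_i,x_j\notin\supp(g_{j_i})$.

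I first claim that $g_{j_i}$ has a support variable $x_c$ with $c\in\{m+1,\dots,\ell-1\}$. Suppose instead that the only neighbour of $x_1$ in $\supp(g_{j_i})$ is $x_\ell$.
\begin{itemize}
\item Then $g_{j_i}\in I_1$ forces $x_\ell^{w_\ell}\mid g_{j_i}$, so $g_{j_i}=x_rx_\ell^{w_\ell}$.
\item If $x_r\notin N_D(x_1)$, then $g_{j_i}\in S'$, a contradiction.
\item If $x_r\in N_D(x_1)$, then $r\in\{m+1,\dots,\ell-1\}$ after all. (Remark~\ref{subset} covers the analogous case with $x_\ell$ in the unweighted position.)
\end{itemize}
With such a $c$ in hand, take $u_k=x_cx_i\in S_1$. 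Then $u_k:g_{j_i}=x_i$, which divides $u_r:g_{j_i}$. So $((S_1):g_{j_i})$ is generated by variables.

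\textbf{Part (2).} Take $g_{j_l}$ before $g_{j_i}$ with $g_{j_l}:g_{j_i}$ not a variable. Since $g_1,\dots,g_s$ is a linear quotient order of $I_2$, there is $t<j_i$ with $g_t:g_{j_i}=z$ a variable dividing $g_{j_l}:g_{j_i}$. There are three cases.
\begin{itemize}
\item If $g_t\in S_2$, we are done with $g_t$ in place of a $u_k$. This is fine for (3), although (2) as stated only asks for $S_1$; I would read (2) as allowing this case, or reduce it to the next one.
\item If $g_t\in HS_1(I_1)$, some generator $u$ of $HS_1(I_1)$ divides $g_t$, so $u:g_{j_i}$ divides $z$ and equals $z$, since $u\nmid g_{j_i}$ because $g_{j_i}\notin HS_1(I_1)$. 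If $u\in S_1$ we are done. If $u=x_rx_\ell^{w_\ell}\in S''$, then $z$ is $x_r$ or $x_\ell$.
  \begin{itemize}
  \item When $z=x_r$, pick $x_c\in\supp(g_{j_i})$ with $c\in\{m+1,\dots,\ell-1\}$ as in (1), and use $x_rx_c\in S_1$.
  \item When $z=x_\ell$ and $x_\ell\mid g_{j_i}$ with smaller exponent, we need a different witness; here I would use Remark~\ref{subset} to locate the other variable of $g_{j_i}$ in $N_D(x_1)\setminus\{x_\ell\}$.
  \end{itemize}
\item If $g_t\in S'$, then $g_t=x_rx_\ell^{w_\ell}$ with $x_r\notin N_D(x_1)$, and the degree-increasing order gives the same kind of $x_\ell$-power comparison to handle.
\end{itemize}

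\textbf{Part (3).} This is then immediate. The order $u_1,\dots,u_\alpha,g_{j_1},\dots,g_{j_q}$ works: the $S_1$ part by the Veronese property, colons against $S_1$ by (1), and colons against earlier $g_{j_l}$ by (2) together with (1). Minimality of $S_1\cup S_2$ was noted in Setting~\ref{s1s2s3s4}.

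\textbf{Main obstacle.} I expect the main obstacle to be the case analysis in (2) where the witness variable $z$ is $x_\ell$, coming from $S''$ or $S'$ generators carrying $x_\ell^{w_\ell}$ with $w_\ell\ge2$. There one must exploit Remark~\ref{subset}, the degree-increasing order, and the exclusion of $D_1,\dots,D_5$ via Lemma~\ref{V+} to show the exponent comparison still yields a linear witness.
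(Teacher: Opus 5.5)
Your parts (1) and (3) are correct and follow the paper. Your single claim that $\supp(g_{j_i})$ meets $\{x_{m+1},\dots,x_{\ell-1}\}$ packages the paper's three cases, including the content of \Cref{subset}.

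The gap is in part (2), in exactly the situations you defer to the ``main obstacle''. These are the subcase $z=x_\ell$ when $g_t$ is divisible by some $x_rx_\ell^{w_\ell}\in S''$, and the whole case $g_t\in S'$. You give no argument there, and the tools you name cannot produce one:
\begin{itemize}
\item No element of $S_1$ contains $x_\ell$, so no $u_k\in S_1$ can have $u_k:g_{j_i}=x_\ell$.
\item \Cref{subset} only returns $x_r\in N_D(x_1)\setminus\{x_\ell\}$, which you already have.
\item In the $S'$ case $x_r\notin N_D(x_1)$, so $x_r$ occurs in no element of $S_1$ either.
\item The degree-increasing order plays no role.
\end{itemize}
A witness would have to be some other variable of $g_{j_l}:g_{j_i}$, and nothing in your sketch locates one.

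What is missing is the observation that these configurations cannot occur. In both cases $g_t=x_rx_\ell^{w_\ell}$, since an edge generator divisible by $x_rx_\ell^{w_\ell}$ with $w_\ell\ge 2$ equals it.
\begin{itemize}
\item Suppose $g_t:g_{j_i}=x_\ell$. Then $x_r$ and $x_\ell^{w_\ell-1}$ divide $g_{j_i}$, so $\supp(g_{j_i})=\{x_r,x_\ell\}=\supp(g_t)$. Simplicity of $G$ then forces $g_{j_i}=g_t$, which is absurd.
\item Hence $g_t:g_{j_i}=x_r$, so $x_\ell^{w_\ell}\mid g_{j_i}$, i.e.\ $g_{j_i}=x_ax_\ell^{w_\ell}$.
\item Such a generator of $I_2$ lies in $S'$ if $x_a\notin N_D(x_1)$. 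If $x_a\in N_D(x_1)\setminus\{x_\ell\}$, it lies in $HS_1(I_1)$ and hence outside $X$. Either way it is not in $S_2$.
\end{itemize}
So only your cases ``$g_t\in S_2$'' and ``$g_t$ divisible by an element of $S_1$'' can actually arise, and (2) follows. Your $z=x_r$ argument in the $S''$ case is valid, but that case is vacuous too.

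The paper proceeds differently. In the $S'$ case it derives $g_{j_i}=x_ax_\ell^{w_\ell}$ with $x_a\in N_D(x_1)\setminus\{x_\ell\}$. It then splits $g_{j_l}:g_{j_i}\in\{x_r^{w_r},x_jx_r^{w_r},x_rx_j^{w_j}\}$ and uses \Cref{V+} and simplicity of $G$ to reach either a contradiction or the witness $x_ax_j\in S_1$. Either that case analysis or the vacuity argument above closes the gap; your sketch as written does not.
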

\begin{proof}
 Since $S_1$ is square-free Veronese in the variables. By \cite[Corollary 4.2]{hmmz21}, square-free Veronese ideals has linear quotients. Let $(S_1)=(u_1\ldots,u_{\alpha})$ has a linear quotient order, and $S_2=\{g_{j_1},\ldots,g_{j_q} \}$. 
 \vskip 0.2cm
\noindent
  (1) Consider $u_l:g_{j_i}$, for any $1\leq l\leq \alpha$.  If $u_l:g_{j_i}$ is variable, then there is nothing to prove. Assume  $u_l:g_{j_i}$ is not variable. Set $u_l=x_ix_j\in S_1$ and $g_{j_i}=x_ax_b^{w_b}\in S_2$ such that $u_l:g_{j_i}$ is not a variable. Now, we will show that there exists $u_k\in S_1$  such that $u_k:g_{j_i}$ is a variable which divides $u_l:g_{i_j}$. Since $g_{j_i}=x_ax_b^{w_b}$ is an edge in $D$ and $N_D(x_1)$ is a minimal vertex cover of $D$, then one of $x_a $ or $x_b$ is in $N_D(x_1)=\{x_{{m+1}}, \dots,x_{\ell}\}$. Now, we see in three cases: 

 Case (i): If $x_a\in N_D(x_1)\setminus \{x_\ell\}$ (respectively, $x_b\in N_D(x_1)\setminus \{x_\ell\}$), then taking $u_k=x_ix_a$ which is in $S_1$ (respectively, $u_k=x_ix_b$ which is in $S_1$) gives that $u_k:g_{j_i}=x_i$, which divides $u_l:g_{j_i}$, as required. 
 
 Case (ii): Assume $a=\ell$, then $g_{j_i}=x_\ell x_b^{w_b}\in S_2\subset I_2$. By \Cref{subset}, we get $x_b\in N_D(x_1)\setminus \{x_\ell\}$. Therefore, we get $x_ix_b\in S_1$. Take  $u_k=x_ix_b\in S_1$. Then $u_k:g_{j_i}=x_i$ which divides $u_l:g_{j_i}$, as required. 
 
 Case (iii): Assume $b=\ell$, then we have $g_{j_i}=x_ax_b^{w_b}=x_ax_\ell^{w_\ell}\in S_2$. This implies that $x_a\in N_D(x_1)$. Therefore, we get $u_k=x_ax_i\in S_1$. This implies that $u_k:g_{j_i}=x_i$ which divides $u_l:g_{j_i}$, as required.

\noindent
 (2) Let $g_{j_l}, g_{j_i}\in S_2$. If $g_{j_l}:g_{j_i}$ is a variable, then there is nothing to show. Suppose $g_{j_l}:g_{j_i}$ is not a variable. Since $g_1,\ldots,g_s$ is a linear quotient order, then there exists a $1\leq t<j_i$ such that $g_t:g_{j_i}$ is a variable, which divides $g_{j_{l}}:g_{j_{i}}$. If $t\in \{j_1,\ldots,j_{i-1}\}$, then we are done. If $g_t\in (S_1)$, then $u_k\mid g_t$, for some $u_k\in S_1$. So $u_k:g_{j_i}$ is a variable because $g_t:g_{j_i}$ is a variable and $u_k:g_{j_i}$ divides $g_t:g_{j_i}$. Since $g_t:g_{j_i}$ divides $g_{j_{l}}:g_{j_{i}}$, then we get $u_k:g_{j_i}$ is a variable, which divides $g_{j_{l}}:g_{j_{i}}$, for some $u_k\in S_1$, as required. Assume $t\notin \{j_1,\ldots,j_{i-1}\}$ and $g_t\notin (S_1)$. That is, $g_t\notin S_1 \cup S_2$. Note that, every generator of $I_2$ is either divisible by an element of $\mathcal{G}(HS_1(I_1))$ or belongs to $S_2\cup S_4$. If $g_t$ divisible by an element of $\mathcal{G}(HS_1(I_1))$. Then either $g_t\in (S_1)$ or $g_t\in S''= \{x_ix_\ell^{w_\ell}\mid x_i \in N_D(x_1)\setminus \{x_l\}\}$. If $g_t\in (S_1)$, we have already shown above. If $g_t\in S''$, then $g_t=x_rx_\ell^{w_\ell}$, for $x_r\in N_D(x_1)\setminus \{x_l\}$. We have $g_t:g_{j_i}$ is a variable and $g_t=x_rx_{\ell}^{w_\ell}$. This implies that $g_t:g_{j_i}=x_r$ and $g_{j_i}=x_ax_\ell^{w_\ell}$, for $x_a\in N_D(x_1)\setminus \{x_l\}$. Therefore, we get $x_ax_r\in S_1$. Take $u_k=x_ax_r$. Hence $u_k:g_{j_i}=x_r$, which divides $g_{j_i}:g_{j_i}$, as required. If $g_t\in (S_2\cup S_4)$, then $g_t\in S_4$ because $g_t\notin(S_1\cup S_2)$. Hence $g_t=x_rx_{\ell}^{w_\ell}$, for some $x_r\in V(D)\setminus N_D(x_1)$. Since $g_t:g_{j_i}$ is a variable and $w_\ell\geq 2$, we have $g_t:g_{j_i}=x_r$. This implies that $g_{j_i}=x_ax_\ell ^{w_\ell}$, for some $x_a \neq x_r \in V(D)$. As $g_{j_i}=x_ax_\ell ^{w_\ell}\in S_2$, it follows that $x_a\in N_D(x_1)\setminus \{x_\ell\}$. Since $x_r$ divides $g_{j_{l}}:g_{j_i}$, there are three possibilities:
\begin{equation*}
   g_{j_l}:g_{j_i}\in \{x_r^{w_r},~x_jx_r^{w_r},~x_rx_j^{w_j}\},~ \text{for some}~~ j. 
\end{equation*}

\item (i)
Assume $g_{j_{l}}:g_{j_i}=x_r^{w_r}$. Since $g_{j_l}:g_{j_i}$ is not a variable, then $w _r \geq 2$. As $g_{j_i}=x_ax_\ell^{w_\ell}$ and $g_{j_{l}}:g_{j_i}=x_r^{w_r}$, it follows that $g_{j_l}=x_ax_r^{w_{r}}$ or $x_\ell x_r^{w_{r}}$. If $g_{j_\ell}=x_\ell x_r^{w_r}$, then together with $g_t=x_r x_\ell^{w_\ell}$, it yields two edges joining the same pair of vertices, contradicting the fact that $G$ is a simple graph. Therefore, $g_{j_l}=x_ax_r^{w_{r}}$. Consequently, $x_{\ell},x_r \in N_D^+(x_a)\cap V^+$,  contradicting \cref{V+}. 
\item(ii)
Assume $g_{j_{l}}:g_{j_i}=x_jx_r^{w_r}$. Then necessarily $g_{j_{l}}=x_jx_r^{w_r}$. Since $x_r\in V(D)\setminus N_D(x_1)$ and $N_D(x_1)$ is a minimal vertex cover of $D$, then $x_j\in N_D(x_1)$. Also, $x_j\neq x_\ell$, otherwise $g_{j_{l}}:g_{j_i}=x_r^{w_r}$, which is a contradiction. Then $x_j\in N_D(x_1)\setminus \{x_\ell\}$. Now we show that $x_a\neq x_j$. Suppose $x_a=x_j$. If $w_r=1$, then we get $g_{j_{l}}:g_{j_i}=x_r$. If $w_r\geq2$, then  $x_{\ell},x_r \in N_D^+(x_a)\cap V^+$,  contradicting \cref{V+}. Therefore we get $x_a\neq x_j$. Thus $x_ax_j\in S_1$, say $u_k=x_ax_j$. This implies that $u_k:g_{j_i}=x_j$, which divides $g_{j_{l}}:g_{j_i}$, as required.
\item(iii)
Assume $g_{j_{l}}:g_{j_i}=x_rx_j^{w_j}$. Then necessarily $g_{j_l}=x_rx_j^{w_j}$. From $g_t=x_rx_\ell
^{w_\ell}$, we have $x_\ell\in N_D^+(x_r)$ and by \Cref{V+}, $|N_D^+(x_r)\cap V^+|\leq 1$ implies $w_j=1$. Then  $g_{j_l}=x_rx_j$. Since $g_{j_l}:g_{j_i}$ is not a variable, then $x_a\neq x_j$. Since $g_{j_l}$ is an edge of $D$ and $x_r\in V(D)\setminus N_D(x_1)$, then $x_j\in N_D(x_1)$. Also, $x_j\neq x_\ell$, otherwise $g_{j_l}=g_t\in S_4$, which contradicting that $g_{j_l}\in S_2$.  Hence, $x_j\in N_D(x_1)\setminus \{x_\ell\}$ and $x_ax_j\in S_1$, denote $u_k=x_ax_j$. This implies that $u_k:g_{j_i}=x_j$, which divides $g_{j_{l}}:g_{j_i}$, as required.
\vskip 0.2cm 
\noindent
(3)  follows from (1) and (2). 
\end{proof}

\begin{lemma}\label{s1-S3-S4-colon}
Let $D$ be a vertex-splittable weighted oriented graph as in \Cref{s1s2s3s4} and assume that
$w_\ell\ge2$. Let $u\in S_1$ and $v\in S_3\sqcup S_4$. Then either $u:v$ is a variable, or there exists
\[
z\in
\begin{cases}
S_1,& \text{if } v\in S_3,\\
S_2\sqcup S_3,& \text{if } v\in S_4,
\end{cases}
\]
such that $z:v$ is a variable which divides $z:v\mid u:v$.
\end{lemma}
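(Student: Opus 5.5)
The plan is to write everything explicitly and compute the colons directly. By \Cref{s1s2s3s4}, $u\in S_1$ has the form $u=x_ix_j$ with $x_i\neq x_j$ in $N_D(x_1)\setminus\{x_\ell\}$. Every element of $S_3\sqcup S_4$ has the form $v=x_rx_\ell^{w_\ell}$: for $v\in S_3$ we have $x_r\in N_D(x_1)\setminus\{x_\ell\}$, and for $v\in S_4=S'$ we have $x_r\in V(D)\setminus N_D(x_1)$. Since $x_i,x_j\neq x_\ell$, the colon reduces to
\[
u:v=\frac{x_ix_j}{\gcd(x_ix_j,x_r)}.
\]
So $u:v$ is a variable exactly when $x_r\in\{x_i,x_j\}$. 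In the remaining case $u:v=x_ix_j$, and it suffices to find $z$ in the required set with $z:v=x_i$.

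\emph{Case $v\in S_3$.} If $u:v$ is not a variable, then $x_r\notin\{x_i,x_j\}$. Both $x_i$ and $x_r$ lie in $N_D(x_1)\setminus\{x_\ell\}$ and are distinct, so $z:=x_ix_r\in S_1$. Then $z:v=x_i$, and this divides $u:v=x_ix_j$, as required.

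\emph{Case $v\in S_4$.} Here $x_r\notin N_D(x_1)$, so $u:v=x_ix_j$ is never a variable. The natural candidate is $z_0:=x_ix_\ell^{w_\ell}$. It lies in $S''$ because $x_i\in N_D(x_1)\setminus\{x_\ell\}$, and it satisfies $z_0:v=x_i$. I split according to the definition of $S_3$.
\begin{enumerate}
\item If $z_0\notin(S_2)$, then $z_0\in S_3$ by definition, and we take $z=z_0$.
\item If $z_0\in(S_2)$, some $g\in S_2$ divides $z_0=x_ix_\ell^{w_\ell}$. Then $g:v$ divides $z_0:v=x_i$, so $g:v\in\{1,x_i\}$.
\end{enumerate}
In the second situation I exclude $g:v=1$, i.e.\ $g\mid v=x_rx_\ell^{w_\ell}$. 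The element $g$ is a minimal generator of the edge ideal $I_2=I(D\setminus x_1)$, so its support consists of two adjacent vertices. Since $g\mid x_ix_\ell^{w_\ell}$, that support must be $\{x_i,x_\ell\}$. Hence $x_i\mid g$, while $x_i\nmid v$ because $x_i\neq x_r$: indeed $x_i\in N_D(x_1)$ and $x_r\notin N_D(x_1)$. Therefore $g:v=x_i$, and $z:=g\in S_2$ works.

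The only step needing care is this last one: recording that elements of $S_2$ are edge monomials with two-element support, so that $g$ is forced to involve $x_i$. The rest is direct bookkeeping with the definitions of $S_1,\dots,S_4$ from \Cref{s1s2s3s4}. I do not expect \Cref{V+} or \Cref{subset} to be needed here.
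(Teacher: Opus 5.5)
Your proposal is correct and follows the paper's proof essentially step for step. For $v\in S_3$ you use $x_ix_r\in S_1$; for $v\in S_4$ you use $x_ix_\ell^{w_\ell}\in S''$, which either lies in $S_3$ or is divisible by some $g\in S_2$. Your argument that $g$ must have support $\{x_i,x_\ell\}$, so that $g:v=x_i$ and not $1$, makes explicit a step the paper only asserts.
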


\begin{proof} Let $u \in S_1$, say $u=x_ix_j$, for some $i,j\in \{m+1,\ldots,\ell-1\}$. If $u: v$ is a variable, there is nothing to show. Assume $u: v$ is not a variable. We will prove in the following two cases:\\
{ \bf Case 1:} Suppose $v\in S_3$, say $v=x_rx_\ell^{w_\ell}\in S_3$, where $r\in \{m+1,\ldots,\ell -1\}$. This implies that $x_ix_r\in S_1$, denoted by $u'=u_k=x_ix_r$. This implies that $u':v=x_i$, which divides $u:v$.

\noindent 
{\bf Case 2:} Suppose $v\in S_4$, say $v=x_rx_\ell^{w_\ell}\in S_4$ such that $u:v$ is not a variable. From the definition of $S''$, we have $u'=x_ix_\ell^{w_\ell}\in S''$ for some $i$. This implies that,  $u'\in S_3$ or $u'\in (S_2)$. If $u'\in S_3$, then $u'=x_ix_\ell^{w_\ell}$. Therefore we get $u':v=x_i$, which divides $u:v$. If $u'\in (S_2)$, then $u''\mid u'$, for some $u''\in S_2$. This implies that $u'':v\mid u':v=x_i$. Then we get $u'':v=x_i$, which divides $u:v$, as required. 
\end{proof}

\begin{lemma}\label{lem-S_2-S_4}
  Let $D$ be a vertex-splittable weighted oriented graph as in \Cref{s1s2s3s4} and assume that
$w_\ell\ge2$. Let $u\in S_2$ and $v\in S_4$. Then either $u:v$ is a variable, or there exists $z\in S_2\sqcup S_3$
such that $z:v$ is a variable which divides $u:v$.
\end{lemma}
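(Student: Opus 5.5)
We write $u=x_ax_b^{w_b}\in S_2$ and $v=x_rx_\ell^{w_\ell}\in S_4$, where $x_r\in V(D)\setminus N_D(x_1)$ and $w_\ell\ge 2$. We assume $u:v$ is not a variable and look for a $z\in S_2\sqcup S_3$ with $z:v$ a variable dividing $u:v$. The guiding idea, as in Case 2 of \Cref{s1-S3-S4-colon}, is to use the generators $x_ix_\ell^{w_\ell}\in S''$ with $x_i\in N_D(x_1)\setminus\{x_\ell\}$. Such a monomial either lies in $S_3$, or is divisible by some $u''\in S_2$. In both cases, since $x_i\neq x_r$, its colon with $v$ is $x_i$ (or divides $x_i$, hence equals $x_i$). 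So it suffices to find a variable $x_i\in N_D(x_1)\setminus\{x_\ell\}$ dividing $u:v$.

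Since $u$ is an edge monomial and $N_D(x_1)$ is a minimal vertex cover of $D$, one of $x_a,x_b$ lies in $N_D(x_1)$. The case analysis goes as follows.

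\emph{Case (i):} some endpoint $x_c\in\{x_a,x_b\}$ lies in $N_D(x_1)\setminus\{x_\ell\}$. Then $x_c\nmid v$, since $x_c\ne x_r$ (because $x_r\notin N_D(x_1)$) and $x_c\ne x_\ell$. Hence $x_c$ divides $u:v$, and the observation above gives $z=x_cx_\ell^{w_\ell}$, or an element of $S_2$ dividing it, with $z:v=x_c$.

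\emph{Case (ii):} the only endpoint of $u$ in $N_D(x_1)$ is $x_\ell$. If $u=x_\ell x_b^{w_b}$ with $w_b\ge2$, then \Cref{subset} gives $x_b\in N_D(x_1)\setminus\{x_\ell\}$, which puts us back in Case (i). So either $u=x_\ell x_b$, or $u=x_ax_\ell^{w_\ell}$ with $x_a\notin N_D(x_1)$.
\begin{itemize}
\item If $u=x_ax_\ell^{w_\ell}$ with $x_a\notin N_D(x_1)$, then $u\in X$ (it belongs to $S_2$) and has the form defining $S'$. Thus $u\in S'=S_4$, contradicting $S_2=X\setminus S'$.
\item If $u=x_\ell x_b$ with $x_b\notin N_D(x_1)$, then $u:v=x_b/\gcd$, which is a variable (it equals $x_b$, or $1$ if $x_b=x_r$). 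This contradicts the assumption that $u:v$ is not a variable.
\end{itemize}

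The main obstacle is this case split: ruling out the configurations in which neither endpoint of $u$ lies in $N_D(x_1)\setminus\{x_\ell\}$. It relies on \Cref{subset} and on the precise definition of $S_2$ versus $S_4$. One must also handle the weight carefully when $x_a=x_r$, where \Cref{V+} may be needed to exclude two outgoing heavy neighbours. In the remaining configurations, $u:v$ is forced to be a single variable, and the lemma follows.
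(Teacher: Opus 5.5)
Your proof is correct and follows the paper's argument. The key step in both is Case (i): you use $x_cx_\ell^{w_\ell}\in S''$, which lies in $S_3$ or is divisible by an element of $S_2$, and \Cref{subset} reduces the case $a=\ell$ to it. The only difference is that the paper dispatches $b=\ell$ directly via $u:v=x_a$, whereas you rule it out via $u\in S'$.

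A few small points. \Cref{subset} does not need $w_b\ge 2$, so your separate subcase $u=x_\ell x_b$ is vacuous. Its ``$u:v=1$'' alternative is not a variable, but it is anyway excluded by minimality of the generating set. Finally, the closing appeal to \Cref{V+} is unnecessary.
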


\begin{proof} Let $u=x_ax_b^{w_b}\in S_2$ and $v=x_rx_\ell^{w_\ell}\in S_4$, where $x_r\in V(D)\setminus N_D(x_1)$. If $u:v$ is variable then nothing to show. Assume $u:v$ is not a variable. If $b = \ell$, then it is easy to see that $u:v=x_a$ which is a variable, as required. Assume $b \neq \ell$. Since $u=x_ax_b^{w_b}\in S_2\subset I_2$ which is an edge in $D$ and $N_D(x_1)$ is minimal vertex cover. Then one of $x_a$ or $x_b$ in $N_D(x_1)$.

\item Case(i) Suppose $x_a\in N_D(x_1)\setminus\{x_\ell\}$ (respectively,  $x_b\in N_D(x_1)\setminus\{x_\ell\}$), then $x_a\neq x_r$, because $x_r\in V(D)\setminus N_D(x_1)$. From the definition of $S''$ we get $u'=x_ax_\ell^{w_\ell}\in S''$ (respectively, $u''=x_bx_\ell^{w_\ell}$). This implies that $u'\in S_3$ or $u'\in (S_2)$. If $u'\in S_3$, then $u':v=x_a$, which divides $u:v$, as required. If $u'\in (S_2)$, then $u''\mid u$, for some $u''\in S_2$. This implies that $u'':v\mid u':v=x_a$. Then we get $u'':v=x_a$, which divides $u:v$, as required. 
\item Case(ii)
Suppose $x_\ell=x_a$. Then $x_a\in N_D(x_1)$. Then by \Cref{subset}, we have $x_b\in N_D(x_1)\setminus\{x_\ell\}$. This falls under Case 1 and we are done in this case. 
\end{proof}

\begin{lemma} \label{colon1}
  Let $D$ be a vertex-splittable weighted oriented graph as in \Cref{s1s2s3s4}. Assume that $D_6,D_7,D_8$ are not induced subgraphs of $D$ and $w_\ell\geq2$. Suppose that 
  \begin{enumerate}
      \item $N_D^+(x_\ell)\cap V^+=\{x_t\}\nsubseteq N_D(x_1)$  \; or 
      \item $N_D^+(x_\ell)\cap V^+=\emptyset$ \; or 
      \item $N_D^+(x_\ell)\cap V^+=\{x_t\}\subseteq N_D(x_1)$ and $w_{\ell} > 2$. 
  \end{enumerate}
 For any $u\in S_2$ and $v\in S_3$, either $u:v$ is a variable or there exists $u'\in S_1$ such that $u':v$ is a variable which divides $u:v$.   
\end{lemma}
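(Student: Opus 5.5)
The plan is to reduce the statement to a support condition and then run a case analysis on how the edge $u$ meets the minimal vertex cover $N_D(x_1)$.

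\emph{Reduction.} Write $v=x_rx_\ell^{w_\ell}\in S_3$ with $x_r\in N_D(x_1)\setminus\{x_\ell\}$; by definition of $S_3$, $v\notin (S_2)$. Write $u=x_ax_b^{w_b}\in S_2$ and assume $u:v$ is not a variable. For any $x_c\in N_D(x_1)\setminus\{x_\ell,x_r\}$ the monomial $u'=x_rx_c$ lies in $S_1$, and $u':v=x_c$. So it suffices to show that $u:v$ is divisible by some variable $x_c$ with $c\in\{m+1,\dots,\ell-1\}$ and $c\ne r$. Since $x_c\nmid v$ for such $c$, this holds as soon as $\supp(u)$ meets $N_D(x_1)\setminus\{x_\ell,x_r\}$. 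Because $u$ is an edge and $N_D(x_1)$ is a vertex cover, the only remaining cases are those in which $\supp(u)\cap N_D(x_1)\subseteq\{x_r,x_\ell\}$.

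\emph{Easy subcases.}
\begin{itemize}
\item If $\supp(u)=\{x_r,x_\ell\}$, then $u=v$, which contradicts $v\notin(S_2)$. Otherwise $u=x_\ell x_r^{w_r}$, which would give two oriented edges on the same pair of vertices.
\item If $u=x_\ell x_y^{w_y}$ with $x_y\notin N_D(x_1)$, then \Cref{subset} forces $x_y\in N_D(x_1)\setminus\{x_\ell\}$, a contradiction.
\item If $u=x_yx_\ell^{w_\ell}$ with $x_y\notin N_D(x_1)$, then $u\in S_4$ rather than $S_2$.
\end{itemize}
Hence $u$ has the form $x_rx_y^{w_y}$ or $x_yx_r^{w_r}$, with $x_y\notin N_D(x_1)$ and $x_y$ not adjacent to $x_1$.

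\emph{Case $u=x_rx_y^{w_y}$.} Here $u:v=x_y^{w_y}$, which is not a variable only if $w_y\ge2$. Then $x_y,x_\ell\in N_D^+(x_r)\cap V^+$, contradicting \Cref{V+}.

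\emph{Case $u=x_yx_r^{w_r}$ (the main obstacle).} Here $u:v=x_yx_r^{w_r-1}$, so necessarily $w_r\ge2$, and we must derive a contradiction. Note that $x_r$ is then not a source, since $x_y\to x_r$, and $x_r\to x_\ell$ with $w_r,w_\ell\ge 2$.
\begin{itemize}
\item If $x_y\not\sim x_\ell$, the path $x_y\to x_r\to x_\ell$ is an induced $D_1$. This contradicts \Cref{corollary-2..5}, since vertex splittable ideals have linear quotients.
\item If $x_y\to x_\ell$, then $x_r,x_\ell\in N_D^+(x_y)\cap V^+$, contradicting \Cref{V+}.
\item If $x_\ell\to x_y$, we get a directed triangle $x_y\to x_r\to x_\ell\to x_y$. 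If $w_y\ge2$, this is $D_3$, which is excluded. If $w_y=1$, this is exactly where the three hypotheses and $D_6,D_7,D_8$ should enter. Under (1) or (2), I would show that $N^+_D(x_\ell)\cap V^+$ is incompatible with $x_r\in N_D(x_1)$. Failing that, I would form the induced path on $x_1,x_r,x_\ell,\dots$ or $x_1,x_r,x_y$ and recognise it as $D_6$ or $D_8$, using that $x_y\not\sim x_1$. Under (3), $w_\ell>2$, and the triangle, after checking $w_r>2$, is an induced copy of $D_7$.
\end{itemize}
The hard part is pinning down which of $D_6,D_7,D_8$ arises in each weight and orientation configuration. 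I would handle it by listing the orientations of the edges $x_1x_r$ and $x_1x_\ell$ and the possible weights, and checking each resulting induced subgraph on $\{x_1,x_r,x_\ell,x_y\}$ against \Cref{fig1} and \Cref{fig7}.
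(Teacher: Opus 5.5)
Your reduction to the case $\supp(u)\cap N_D(x_1)\subseteq\{x_r,x_\ell\}$ is correct. Everything after it, however, rests on a false premise. The element $v=x_rx_\ell^{w_\ell}\in S_3\subseteq S''$ is a minimal generator of $HS_1(I_1)$, i.e.\ the product of the generators $x_r$ and $x_\ell^{w_\ell}$ of $I_1$. It need not be an edge of $D$, so nothing gives you $x_r\to x_\ell$. This breaks three steps.

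First, $u=x_\ell x_r^{w_r}$ (an edge $x_\ell\to x_r$) is \emph{not} excluded, and it is exactly the case hypotheses (1)--(3) exist for. Here $u:v=x_r^{w_r-1}$, so $w_r\ge 3$ and $x_r\in N_D^+(x_\ell)\cap V^+\cap N_D(x_1)$, which contradicts (1) and (2) outright. Under (3), the edge $\{x_1,x_r\}$ must contribute $x_1x_r$ to $\mathcal{G}(x_1I_1)$. Since $w_r\ge 2$, this forces $x_r\to x_1$ with $w_1=1$, and then $x_1\to x_\ell\to x_r\to x_1$ is an induced $D_7$. That is how the paper uses (1)--(3), and they cannot be dropped. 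The triangle $x_1\to x_\ell\to x_t\to x_1$ with $w_1=1$, $w_\ell=2$, $w_t=3$ meets every other hypothesis, yet $S_1=\emptyset$, $u=x_\ell x_t^3\in S_2$, $v=x_tx_\ell^2\in S_3$ and $u:v=x_t^2$.

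Your argument never actually uses (1)--(3). The only place you reserved for them, the triangle with $x_\ell\to x_y$ and $x_y\notin N_D(x_1)$, is empty by \Cref{subset}, which you had already invoked.

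Second and third, two further steps also need the edge $x_r\to x_\ell$: the appeal to \Cref{V+} at $x_r$ when $u=x_rx_y^{w_y}$, and the $D_1$/triangle analysis when $u=x_yx_r^{w_r}$. The paper instead runs the paths through $x_1$, using the orientation of $\{x_1,x_r\}$ just described. It reads $x_y\to x_r\to x_1\to x_\ell$ as $D_6$ and $x_y\leftarrow x_r - x_1\to x_\ell$ as $D_8$.

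Even that route has a hole, shared by the paper's own proof: the $D_6$ identification tacitly assumes $x_r\not\sim x_\ell$. In the leftover configuration the lemma as stated is false. That configuration is: (3) holds, $x_\ell\to x_t$ with $w_t=2$, and $x_y\to x_t$ with $x_y\notin N_D(x_1)$.

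Take $D$ on $x_1,\dots,x_4$ with edges $x_1\to x_4$, $x_4\to x_3$, $x_3\to x_1$, $x_2\to x_3$, and weights $w_4=3$, $w_3=2$, all others $1$.
\begin{itemize}
\item $I(D)=x_1(x_3,x_4^3)+(x_4x_3^2,x_2x_3^2)$ is vertex splittable, and (3) holds with $x_t=x_3$.
\item $D$ is $D_6,D_7,D_8$-free: its only triangle has $w_3=2$, and its only $4$-vertex induced subgraph has four edges.
\item Yet $S_1=\emptyset$, $u=x_2x_3^2\in S_2$, $v=x_3x_4^3\in S_3$ and $u:v=x_2x_3$.
\end{itemize}
In this example your step claiming that $x_y\to x_r\to x_\ell$ is an induced $D_1$ fails, because the edge is $x_4\to x_3$.

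What does hold is that $x_\ell x_t^2=x_4x_3^2\in S_2$ satisfies $x_4x_3^2:v=x_3$, which divides $u:v$. So no proof of the lemma exactly as stated can be completed. The natural repair is to allow $u'\in S_1\cup S_2$, which is all that the order $S_1,S_2,S_3,S_4$ in \Cref{vertex splittable} actually requires.
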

\begin{proof}
Let $u=x_ax_b^{w_b}\in S_2$ and $v=x_{r}x_\ell^{w_\ell}\in S_3$, where $r\in \{m+1,\ldots,\ell -1\}$. Assume $u:v$ is not a variable. Note that one of $x_a$ or $x_b$ is in $N_D(x_1)=\{x_{m+1},\ldots ,x_\ell\}$ because $N_D(x_1)$ is a vertex cover of $D$. Now we consider the following two cases. 
 
\noindent
\textbf{Case 1:}
Assume $x_b\in N_D(x_1)$. If $b = \ell$, then $u:v=x_a$ which is a variable, as required. Now onwards we assume that $b\neq \ell$. If $b\neq r$, then $x_bx_{r}\in S_1$. Let $u'=x_bx_{r}$, then $u':v=x_b,$ which divides $u:v$, as required. Therefore, it remains to consider the case
$b=r$. Thus $u=x_ax_b^{w_b}$ and $v=x_bx_\ell^{w_\ell}$. Then $w_b>1$, otherwise $u:v=x_a$.
If $a\neq \ell$, then the induced subgraph $D[\{x_1,x_a,x_b,x_\ell\}]$ is same as $D_6$, which is a contradiction. Therefore $a=\ell$. 

(1) Assume $N_D^+(x_\ell)\cap V^+=\{x_t\}\nsubseteq N_D(x_1)$. For $a=\ell$, we get $u=x_\ell x_b^{w_b}\in S_2$ and $v=x_bx_\ell^{w_\ell}\in S_3$. As $u:v$ is not a variable, then $w_b>2$. This gives that $x_b\in N^+_D(x_\ell)\cap V^+$. Therefore, we have $x_b \in N_D^+(x_\ell)\cap V^+=\{x_t\}$. This gives that $b=t$ and hence  $u=x_\ell x_t^{w_t}$ and $v=x_tx_\ell^{w_\ell}$. Thus $v=x_tx_\ell^{w_\ell}\in S_3$. This implies that $x_t \in N_D(x_1)$, which contradicts that $\{x_t\}\nsubseteq N_D(x_1)$. Thus, in either case $a \neq \ell$ or $a=\ell$ we get a contradiction. This implies that $b\neq r$.

(2) Assume $N_D^+(x_\ell)\cap V^+=\emptyset$. For $a=\ell$, we get $u=x_\ell x_b^{w_b}\in S_2$ and $v=x_bx_\ell^{w_\ell}\in S_3$. As $u:v$ is not a variable, then $w_b>2$. This implies that $x_b\in N_D^+(x_\ell)\cap V^+$, which is a contradiction. Thus, in either case $a \neq \ell$ or $a=\ell$ we get a contradiction. This implies that $b\neq r$.

(3) Assume $N_D^+(x_\ell)\cap V^+=\{x_t\}\subseteq N_D(x_1)$ and $w_{\ell} > 2$. For $a=\ell$, we get $u=x_\ell x_b^{w_b}\in S_2$ and $v=x_bx_\ell^{w_\ell}\in S_3$. As $u:v$ is not a variable, then $w_b>2$.  Since $w_\ell>2$, then we get the induced subgraph $D[\{x_1,x_\ell,x_b\}]$ is same as $D_7$, which is a contradiction. Thus, in either case $a \neq \ell$ or $a=\ell$ we get a contradiction. This implies that $b\neq r$.

At the beginning of this case, we already shown that the conclusion of the lemma is true if $b\neq r$.    
 
 \noindent
 \textbf{Case 2:} Assume $x_a\in N_D(x_1)$. If $a=\ell$, then by \Cref{subset}, we get $x_b\in N_D(x_1)\setminus\{x_\ell\}$, which falls under Case 1 and we are done in this case. 
Assume $a\neq \ell$. We first show that $a\neq r$. If $a=r$, then $u=x_ax_b^{w_b}~ \text{and}~ v=x_ax_\ell^{w_\ell}$. Thus $w_b>1$, otherwise $u:v=x_a$. This implies that the induced subgraph $D[\{x_1,x_a,x_b,x_\ell\}]$ is same as $D_8$, which is a contradiction. Therefore $a\neq r$. Then $x_ax_r\in S_1$, denote $u'=x_ax_{r}$. This implies that $u':v=x_a$, which divides $u:v$, as required. 
\end{proof}

\begin{notation} \label{nota1}
Let $N_D^+(x_\ell)\cap V^+=\{x_t\}$. Define the following sets 
\begin{align*}
S_1' &:= S_1,\\[1mm]
S_2' &:= \{g_{j_1},\ldots,g_{j_{p'}},x_tx_\ell^2,
g_{j_{p'+1}},\ldots,g_{j_q}\}
      = S_2\cup\{x_tx_\ell^2\}, \\ & \qquad \text{where } \deg(g_{j_r})=2
\text{ for every } 1\le r\le p', 
      \\[1mm]
S_3' &:= \{x_rx_\ell^2\in S_3 : r\neq t\}
      = S_3\setminus\{x_tx_\ell^2\},\\[1mm]
S_4' &:= S_4.
\end{align*} 
Note that $HS_1(I_1)+I_2$ is minimally generated by $S_1'\cup S_2'\cup S_3'\cup S_4'$. 
\end{notation}

\begin{lemma}\label{S_2':S_3'}
    Let $D$ be a vertex-splittable weighted oriented graph as in \Cref{s1s2s3s4}. Let $S_2',S_3'$ be the sets as defined in \Cref{nota1}. Assume that $D_6,D_7,D_8$ are not induced subgraphs of $D$. Suppose that $$N_D^+(x_\ell)\cap V^+=\{x_t\}\subseteq N_D(x_1)\text{ and } w_{\ell}=2.$$ 
     For any $u\in S_2'$ and $v\in S_3'$, either $u:v$ is a variable or there exists $u'\in S_1$ such that $u':v$ is a variable which divides $u:v$. 
\end{lemma}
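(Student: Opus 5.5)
The plan is to follow the proof of \Cref{colon1} almost verbatim. The only new features are the extra generator $x_tx_\ell^2$ that \Cref{nota1} moves from $S_3$ into $S_2'$, and the fact that $x_t$ is excluded from $S_3'$. Fix $u\in S_2'$ and $v=x_rx_\ell^{2}\in S_3'$, so that $r\in\{m+1,\ldots,\ell-1\}$ and $r\neq t$, and assume $u:v$ is not a variable.

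First I dispose of the new generator. If $u=x_tx_\ell^2$, then since $t\neq r$ we get $u:v=x_t$, which is a variable. So we may assume $u=x_ax_b^{w_b}\in S_2$. Since $N_D(x_1)$ is a minimal vertex cover of $D$, one of $x_a,x_b$ lies in $N_D(x_1)$, and I split into the same two cases as in \Cref{colon1}.

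\textbf{Case 1: $x_b\in N_D(x_1)$.} If $b=\ell$, then $u:v=x_a$ and we are done. If $b\neq\ell$ and $b\neq r$, then $u'=x_bx_r\in S_1$ satisfies $u':v=x_b\mid u:v$.

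It remains to treat $b=r$. Then $w_b>1$, since otherwise $u:v=x_a$.
\begin{itemize}
\item If $a\neq\ell$, the induced subgraph $D[\{x_1,x_a,x_b,x_\ell\}]$ is $D_6$, exactly as in \Cref{colon1}; this is excluded by hypothesis.
\item If $a=\ell$, then $u=x_\ell x_r^{w_r}$ and $v=x_rx_\ell^2$, so $u:v=x_r^{w_r-1}$. This is not a variable only if $w_r>2$. In that case $x_r\in N_D^+(x_\ell)\cap V^+=\{x_t\}$, so $r=t$, contradicting $v\in S_3'$.
\end{itemize}
This last bullet is where the hypothesis $w_\ell=2$ and the definition of $S_3'$ replace the $D_7$-argument used in \Cref{colon1}(3). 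In that lemma, $w_\ell>2$ was needed to produce an induced $D_7$. Here the offending generator $x_tx_\ell^2$ has simply been removed from $S_3'$, so the appeal to $D_7$ is not needed.

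\textbf{Case 2: $x_a\in N_D(x_1)$.} If $a=\ell$, then \Cref{subset} gives $x_b\in N_D(x_1)\setminus\{x_\ell\}$, which reduces to Case 1. If $a\neq\ell$ and $a=r$, then $w_b>1$, since otherwise $u:v=x_a$. In that situation $D[\{x_1,x_a,x_b,x_\ell\}]$ is $D_8$, by the same identification as in Case 2 of \Cref{colon1}, which is excluded. Hence $a\neq r$, and $u'=x_ax_r\in S_1$ satisfies $u':v=x_a\mid u:v$.

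The main point to verify carefully is the subcase $a=\ell$, $b=r$ of Case 1. There one must check that $w_\ell=2$ forces the colon to be the pure power $x_r^{w_r-1}$, and then use \Cref{V+} together with $r\neq t$. The remaining cases are identical to those of \Cref{colon1}, since they never used the hypothesis on $w_\ell$ or on $x_t$.
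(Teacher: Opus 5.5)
Your proposal follows the paper's proof essentially step for step, including replacing the $D_7$ argument of \Cref{colon1} with the observation that $x_r\in N_D^+(x_\ell)\cap V^+=\{x_t\}$ would force $r=t$. The one slip is in the subcase $b=r$, $a=\ell$: if $w_b=1$, then $u=x_\ell x_r$ divides $v=x_rx_\ell^2$, so $u:v=1$ rather than $x_a$. That case has to be excluded the way the paper does it, by noting that $u\mid v$ contradicts $v\in S_3'$, since elements of $S_3$ do not lie in $(S_2)$. Once this is done, any $w_b\geq 2$ already gives $r=t$, so you need not separate $w_r=2$ from $w_r>2$. Similarly, in Case 2 with $a=r$ and $w_b=1$, the colon is $x_b$, not $x_a$.
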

\begin{proof} Let $u\in S_2'$ and $v\in S_3'$. Set $v=x_{r}x_\ell^2\in S_3'$,  where $r\in \{m+1,\ldots,\ell-1\}$ and $r\neq t$. If $u=x_tx_\ell^2$, then $u:v = x_tx_\ell^2:v = x_t$ which is a variable. Now assume that $u \neq x_tx_\ell^2$ and $u:v$ is not a variable. 
Set $u=x_ax_b^{w_b}\in S_2$. Note that one of $x_a$ or $x_b$ is in $N_D(x_1)=\{x_{m+1},\ldots ,x_\ell\}$ because $N_D(x_1)$ is a vertex cover of $D$. 

\noindent
\textbf{Case 1:}
Assume $x_b\in N_D(x_1)$. If $b = \ell$, then $u:v=x_a$, which is a variable, as required. Now onwards we assume that $b\neq \ell$. If $b\neq r$, then $x_bx_{r}\in S_1$, let $u'=x_bx_{r}$, then $u':v=x_b$ which divides $u:v$, as required. Therefore, it remains to consider the case
$b=r$. Thus, $u=x_ax_b^{w_b}$ and $v=x_bx_\ell^2$.
If $a\neq \ell$, then the induced subgraph $D[\{x_1,x_a,x_b,x_\ell\}]$ is same as $D_6$, which is a contradiction. Therefore $a=\ell$. Then we get $u=x_\ell x_b^{w_b}\in S_2$ and $v=x_bx_\ell^2\in S_3'$. If $w_b= 1$, then $u\mid v$, which contradicts that $v=x_bx_\ell^2\in S_3'$. Therefore $w_b \geq 2$,  this gives that $x_b\in N^+_D(x_\ell)\cap V^+$. This implies that $b=t$. This gives that $r=b=t$, which contradicts the fact that $r\neq t$. Thus, either $a \neq \ell$ or $a=\ell$, we get a contradiction. This implies that $b\neq r$. At the beginning of this case, we already shown that the conclusion of the lemma is true if $b\neq r$.  
 
 \noindent
 \textbf{Case 2:} Assume $x_a\in N_D(x_1)$. If $a=\ell$, then by \Cref{subset}, we get $x_b\in N_D(x_1)\setminus\{x_\ell\}$, which falls under Case 1 and we are done in this case. 
Assume $a\neq \ell$. We first show that $a\neq r$. Suppose $a=r$. Then $u=x_ax_b^{w_b}~ \text{and}~ v=x_ax_\ell^2$. Since $u:v$ is not a variable, then $w_b>1$. This implies induced subgraph $D[\{x_1,x_a,x_b,x_\ell\}]$ is same as $D_8$, which is a contradiction. Therefore $a\neq r$. Then $x_ax_r\in S_1$, denote $u'=x_ax_{r}$. This implies that $u':v=x_a$, which divides $u:v$, as required. 
\end{proof}

Now we prove the main result of this section. 

\begin{theorem}\label{vertex splittable} 
Let $D$ be a vertex-splittable weighted oriented graph as in \Cref{s1s2s3s4}. Then $HS_1(I)$ has linear quotient property if and only if $ D_6, D_7,D_8$ are not induced subgraphs of $D$ as in \Cref{fig7}. 
\end{theorem}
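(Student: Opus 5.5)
The forward direction is immediate. A vertex-splittable ideal has linear quotients, so $I(D)$ has linear quotients. If $HS_1(I)$ has linear quotients, then \Cref{D_i not induced} (that is, \Cref{I(D')} combined with \Cref{not induced}) shows that $D_6,D_7,D_8$ are not induced subgraphs of $D$.

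For the converse, assume $D$ is $\{D_6,D_7,D_8\}$-free and write $I=x_1I_1+I_2$ as in \Cref{s1s2s3s4}. By \eqref{eq-1},
\[
HS_1(I)=x_1\bigl(HS_1(I_1)+I_2\bigr)+HS_1(I_2).
\]
I will apply \Cref{sum} with $x=x_1$, the ideal $HS_1(I_1)+I_2$ in the role of $I_1$, and $HS_1(I_2)$ in the role of $I_2$. Three hypotheses must be checked.

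\textbf{Containment.} We need $HS_1(I_2)\subseteq HS_1(I_1)+I_2$. This is clear, since $HS_1(I_2)\subseteq I_2$: every generator of $HS_1(I_2)$ has the form $x_Fu$ with $u\in\mathcal G(I_2)$.

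\textbf{Minimality.} We need $\mathcal G(x_1(HS_1(I_1)+I_2))\subseteq\mathcal G(HS_1(I))$. This holds because $x_1$ does not divide any generator of $HS_1(I_2)$.

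\textbf{Linear quotients of $HS_1(I_2)$.} Here I use induction on the number of vertices. The graph $D\setminus x_1$ is again vertex-splittable, since $I_2$ is, and it is still $\{D_6,D_7,D_8\}$-free, because induced subgraphs of $D\setminus x_1$ are induced subgraphs of $D$. The induction hypothesis therefore gives that $HS_1(I_2)$ has linear quotients.

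The heart of the proof is showing that $J=HS_1(I_1)+I_2$ has linear quotients. The case $w_\ell=1$ is \Cref{wl-one}. For $w_\ell\ge 2$, \Cref{V+} gives $|N_D^+(x_\ell)\cap V^+|\le1$, which leads to the cases below.

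\textbf{Cases (1)–(3) of \Cref{colon1}.} Here I order the generators as $S_1,S_2,S_3,S_4$: $S_1$ in its squarefree Veronese order, $S_2$ in the order inherited from the degree-increasing order on $\mathcal G(I_2)$, and $S_3$ and $S_4$ arbitrarily. The colon conditions are verified block by block.
\begin{itemize}
\item Within $S_1\cup S_2$, linear quotients hold by \Cref{wl-two}(3).
\item For $S_1:S_3$ and $S_1:S_4$, use \Cref{s1-S3-S4-colon}.
\item For $S_2:S_3$, use \Cref{colon1}.
\item For $S_2:S_4$, use \Cref{lem-S_2-S_4}.
\end{itemize}
The comparisons inside $S_3$, inside $S_4$, and from $S_3$ to $S_4$ remain. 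All these generators have the form $x_rx_\ell^{w_\ell}$, so the colon of two of them is the single variable $x_r$.

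\textbf{Remaining case: $N_D^+(x_\ell)\cap V^+=\{x_t\}\subseteq N_D(x_1)$ and $w_\ell=2$.} Here I use the order $S_1',S_2',S_3',S_4'$ of \Cref{nota1}, inserting $x_tx_\ell^2$ into $S_2$ after the degree-two elements. The comparisons are handled as follows.
\begin{itemize}
\item For $S_2':S_3'$, use \Cref{S_2':S_3'}.
\item For $S_1':S_3'$ and for $S_4$, the earlier arguments carry over.
\item The new element $x_tx_\ell^2$ needs two checks. Against $S_1$, the argument of \Cref{wl-two}(1), Case (iii), applies. Against earlier elements of $S_2$ that are comparable, one needs a witness variable. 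The natural one is $x_\ell$, coming from $x_\ell x_t^{w_t}\in S_2$, or else a suitable element of $S_1$.
\item Later degree-three elements $g$ of $S_2$ must be checked against $x_tx_\ell^2$. The colon $(x_tx_\ell^2):g$ must be handled via $S_1$, as in \Cref{wl-two}(2).
\end{itemize}

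I expect this placement of $x_tx_\ell^2$ to be the main obstacle. It is exactly where $D_7$-freeness fails to help, since $w_\ell=2$, and a careful choice of position in the order is required. Once $J$ has linear quotients, \Cref{sum} gives that $HS_1(I)$ has linear quotients, completing the induction. The base case is a graph with no edges or a single edge, where $HS_1$ is zero or principal.
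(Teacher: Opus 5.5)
Your outline follows the paper's proof step for step: induction on $|V(D)|$, the decomposition $HS_1(I)=x_1(HS_1(I_1)+I_2)+HS_1(I_2)$ with \Cref{sum}, \Cref{wl-one} for $w_\ell=1$, the order $S_1,S_2,S_3,S_4$ with the four colon lemmas, and the modified order of \Cref{nota1} in the exceptional case. Your check of the hypotheses of \Cref{sum} is more explicit than the paper's. For minimality you also need that no generator of $HS_1(I_2)$ divides a generator of $HS_1(I_1)+I_2$; this holds because their supports have $3$ and $2$ elements.

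The gap is the step you flag yourself. When $N_D^+(x_\ell)\cap V^+=\{x_t\}\subseteq N_D(x_1)$ and $w_\ell=2$, no earlier lemma shows that $S_1'\cup S_2'$ has linear quotients once $x_tx_\ell^2$ is inserted. This is the one verification the theorem's proof must supply itself (the Claim in the paper), and your sketch of it is not correct.

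\emph{Earlier elements.} Take an earlier, degree-two element $x_ax_b$ of $S_2'$. The element $x_\ell x_t^{w_t}$ cannot be the witness: it has degree $w_t+1\ge 3$, so it comes after $x_tx_\ell^2$, and in any case $(x_\ell x_t^{w_t}):(x_tx_\ell^2)=x_t^{w_t-1}$. The witness comes from $S_1$. If $x_ax_b:x_tx_\ell^2$ is not a variable, then $\{x_a,x_b\}\cap\{x_t,x_\ell\}=\emptyset$. The endpoint lying in the vertex cover $N_D(x_1)$, say $x_a$, then gives $x_ax_t\in S_1$ with $x_ax_t:x_tx_\ell^2=x_a$.

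\emph{Later elements.} Take a later element $g=x_cx_d^{w_d}$ with $w_d\ge 2$. Your claim that $(x_tx_\ell^2):g$ is handled via $S_1$, as in \Cref{wl-two}(2), fails whenever this colon equals $x_\ell^2$, since no element of $S_1$ involves $x_\ell$. This occurs in two situations.
\begin{enumerate}
\item[(a)] $g=x_tx_d^{w_d}$. This can be excluded: $x_\ell\to x_t\to x_d$ with $w_t,w_d>1$ forces an induced $D_1$, a second vertex in $N_D^+(x_\ell)\cap V^+$, or a directed triangle with all weights $>1$. None of these is compatible with $I(D)$ having linear quotients.
\item[(b)] $g=x_cx_t^{w_t}$ with $c\neq\ell$. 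This cannot be excluded. The paper dismisses it by saying $D[\{x_1,x_t,x_c,x_\ell\}]$ is $D_6$, but $x_1,x_t,x_\ell$ span a triangle, so that argument does not apply.
\end{enumerate}

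\emph{Example for (b).} Take arcs $(x_2,x_1),(x_1,x_3),(x_3,x_2),(x_4,x_2)$ with $w_2=w_3=2$ and $w_1=w_4=1$, so $I(D)=(x_1x_2,\,x_1x_3^2,\,x_2^2x_3,\,x_2^2x_4)=x_1(x_2,x_3^2)+x_2^2(x_3,x_4)$. This ideal is vertex splittable, $D$ is $D_6,D_7,D_8$-free, and it falls in this case with $x_\ell=x_3$, $x_t=x_2$, $x_c=x_4$. Here $S_1'=\emptyset$. The order of \Cref{nota1} is $x_2x_3^2$ followed by $S_2$ in an order inherited from $I_2$. It has linear quotients if and only if $x_2^2x_3$ precedes $x_2^2x_4$, yet both orders of $S_2$ come from degree-increasing linear quotient orders of $I_2$. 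The conclusion still holds in this example with the good order, so this is a gap in the proof, not a counterexample to the statement.

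So the witness you mention, $x_\ell x_t^{w_t}$ with $(x_\ell x_t^{w_t}):(x_cx_t^{w_t})=x_\ell$, is exactly what is needed, but for this comparison rather than the earlier one. To close the gap you must show that the order can be arranged so that $x_\ell x_t^{w_t}$ precedes every such $x_cx_t^{w_t}$. This must be done without breaking the other comparisons, in particular those of \Cref{wl-two}(2), which rely on the order inherited from $I_2$.
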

\begin{proof}
Suppose $HS_1(I)$ has linear quotient property. Then by  \Cref{D_i not induced}, $D_i$'s are not induced subgraphs of $D$, for $i=6,7,8$. 

Conversely, assume $D_i$'s are not induced subgraphs of $D$, for $i=6,7,8$. To prove $HS_1(I)$ has linear quotient property, we use induction on $\vert V(D)\vert$. If $\vert V(D)\vert=2$, then it is trivial. Assume $\vert V(D)\vert \geq 3$. Since $I_2=I(D\setminus x_1)$ is vertex-splittable, by induction hypothesis, $HS_1(I_2)$ has linear quotients. By \Cref{eq-1} in \Cref{s1s2s3s4}, we have  $HS_1(I)=x_1(HS_1(I_1)+I_2)+HS_1(I_2)$. To show $HS_1(I)$ has linear quotients, by \Cref{sum}, it is enough to prove that $HS_1(I_1)+I_2$ has linear quotients. Now, we show that $HS_1(I_1)+I_2$ has linear quotients. If $w_\ell=1$, then by \Cref{wl-one}, $HS_1(I_1)+I_2$ has linear quotient property. Now onwards, assume $w_\ell \geq 2$.
\vskip 0.2cm 
\noindent 
By \Cref{V+}, we have $| N^+_D(x_{\ell}) \cap V^+| \leq 1$. Now we have two disjoint cases: 
\vskip 0.2cm
\noindent
 {\bf Case 1:} Suppose $N_D^+(x_\ell)\cap V^+=\{x_t\}\nsubseteq N_D(x_1) ~\text{or}~ N_D^+(x_\ell)\cap V^+=\emptyset ~\text{or}~ (N_D^+(x_\ell)\cap V^+=\{x_t\}\subseteq N_D(x_1)$ and $w_{\ell} > 2).$ In this case, show that $S_1\cup \dots \cup S_4$ is a linear quotient order. 
Note that by \Cref{wl-two}, we have that $(S_1\cup S_2)$ has a linear quotient order. It is clear that, for all $u,u'\in S_3$ and $v,v'\in S_4$, the colons
$u:u'$, $v:v'$, and $u:v$ are variables.
By \Cref{s1-S3-S4-colon}, \Cref{colon1} and \Cref{lem-S_2-S_4},  we have 
$$ ((S_1):(S_3)),\qquad ((S_1):(S_4)), \qquad ((S_1\cup S_2):(S_3)), \qquad ((S_2\cup S_3):(S_4)),$$
are generated by variables. Consequently, we get that $S_1\cup S_2\cup S_3\cup S_4$ is a linear quotient order of $HS_1(I_1)+I_2$. 
\vskip 0.2cm

\noindent 
\textbf{Case 2:} Suppose $N_D^+(x_\ell)\cap V^+=\{x_t\}\subseteq N_D(x_1)$ and $w_{\ell}=2$.  Now we will show that $S_1'\cup S_2'\cup S_3'\cup S_4'$ is a linear quotient order of $HS_1(I_1)+I_2$, where $S_i'$ are the sets as defined in \Cref{nota1}.
By \Cref{s1-S3-S4-colon}, \Cref{colon1} and \Cref{lem-S_2-S_4},  we have 
$$ ((S_1'):(S_3')),\qquad ((S_1'):(S_4')), \qquad ((S_1'\cup S_2'):(S_3')), \qquad ((S_2'\cup S_3'):(S_4')).$$ are generated by variables. Thus, to show that $S_1'\cup S_2'\cup S_3'\cup S_4'$ is a linear quotient order for $HS_1(I_1)+I_2$, it is enough to show $$(S_1' \cup S_2')=(u_1,\ldots,u_\alpha,g_{j_1},\ldots,g_{j_{p'}},x_tx_\ell^2,
g_{j_{p'+1}},\ldots,g_{j_q})$$ has a linear quotient order. 

\noindent 
\underline{Claim:} For $u,v\in S_1'\cup S_2'$, either $u:v$ is a variable or there exists $u'\in S_1'$ such that $u':v$ is a variable which divides $u:v$.

 Since $S_1'=S_1$ already has a linear quotient order, the claim holds for $u,v \in S_1'$. If $u\in S_1'$ and $v\in S_2$, then the claim follows from \Cref{wl-two}. If $u,v\in S_2$, then the claim follows from \Cref{wl-two}. If $u\in S_1'$ and $v=x_tx_\ell^2\in S_2'$, then the claim follows from \Cref{s1-S3-S4-colon}. Assume $u=g_{j_l}=x_ax_b\in S_2'$, for $l\in\{1,\ldots,p'\}$ and $v=x_tx_\ell ^{2}\in S_2'$ such that $g_{j_l}:v$ is not variable. Since $u$ is an edge of $D$ and $N_D(x_1)$ is a vertex cover of $D$, then we have one of $x_a$ or $x_b$ is in $N_D(x_1)=\{x_{m+1},\ldots,x_\ell\}$. If $a\neq \ell$ (respt. $b\neq \ell$), then $x_ax_t\in S_1'$ (respt. $x_bx_t\in S_1')$, say $u'=x_ax_t$ (respt. $x_bx_t$ ) such that $u':v=x_a$ (respt. $x_b$), which divides $u:v$, as required.
 
 If $a=\ell$ or $b=\ell$, then by \Cref{subset}, we get either $x_b\in N_D(x_1)\setminus \{x_\ell\}$ or $x_a\in N_D(x_1)\setminus \{x_\ell\}$ respectively. Without loss of generality assume $x_a\in N_D(x_1)\setminus \{x_\ell\}$. Note that $x_t\in N_D(x_1)$. Also $x_t\neq x_\ell$ because $v=x_tx_\ell^2\in S_3$. This implies that $x_tx_a\in S_1$, denote $u'=x_ax_t$ such that $u':v=x_a$, which divides $u:v$, as required. Therefore it remains to prove the claim for $u=x_tx_\ell ^{2}\in S_2'$ and $v=g_{j_l} \in S'_2$,  for $l\in\{p'+1,\ldots,q\}$. 

Let $u=x_tx_\ell ^{2}\in S_2'$ and $v=g_{j_l}=x_cx_d^{w_d}\in S'_2$,  for $l\in\{p'+1,\ldots,q\}$ with $w_d \geq 2$. If $u:v$ is a variable, then nothing to show. Assume $u:v$ is not a variable. If $d\neq \ell$, then $u:v=x_t$, which is a variable. Assume $d\neq \ell$. Now we will consider the following cases: 
\vskip 0.2cm  
\noindent
\textrm{(i)} $d=t~\text{and}~ c=\ell$.
\textrm{(ii)} $d=t~\text{and}~ c\neq \ell$.
\textrm{(iii)} $d\neq t~\text{and}~ c=\ell$.
\textrm{(iv)} $d\neq t~\text{and}~ c\neq \ell$. 

\noindent 
\textrm{(i)}. For $d=t~\text{and}~ c=\ell$, we get $u:v=x_tx_\ell^2:x_\ell x_t^{w_t}=x_\ell$ is a variable, as required.

 \noindent
\textrm{(ii)}. For $d=t~\text{and}~ c\neq \ell$, we get $u=x_tx_\ell^2$ and $v=x_cx_t^{w_t}$. Then the induced subgraph $D[\{x_1,x_t,x_c,x_\ell\}]$ is same as $D_6$, which is a contradiction.

\noindent
  \textrm{(iii)}. For $d\neq t$ and $c=\ell$, we get $u=x_tx_\ell^2$ and $v=x_\ell x_d^{w_d}$. Consequently, $x_t,x_d\in N^+_D(x_\ell)\cap V^+$, contradicting \Cref{V+}.

  \noindent
 \textrm{(iv)}. For $d\neq t$ and $c\neq \ell$, we have $u=x_tx_\ell^2$ and $v=x_cx_d^{w_d}$. Since $v=x_cx_d^{w_d}\in I_2$ is an edge of $D$, then one of $x_c ~\text{or}~x_d ~\text{is in}~ N_D(x_1)$. Without loss of generality, let $x_c\in N_D(x_1)$. Since $c,d\neq \ell$, then $x_c\in N_D(x_1)\setminus \{x_\ell\}$. Now we show that $t\neq c$. Suppose $t=c$. Then $u=x_tx_\ell^2$ and $v=x_tx_d^{w_d}$ is an edge in $D$. This implies that the induced subgraph $D[\{x_1,x_t,x_d,x_\ell\}]$ is same as $D_8$, which is a contradiction. Therefore, $t\neq c$. Then $x_tx_c\in S'_1$, denote $u'=x_tx_c$ such that $u':v=x_t$, which divides $u:v$, as required. This proves the claim and hence the theorem. 
\end{proof}
\begin{example}
   Let $I(D)=x_1I_1+I_2$ be a vertex splittable, where $I_1=(x_3,x_4,x_2^2)$ and $I_2=(x_1x_4,x_2x_3^3)$. By \Cref{betti split}, we have $HS_1(I)=x_1(HS_1(I_1)+I_2)+HS_1(I_2)$. Note that $HS_1(I_1)=(x_3x_4,x_3x_2^2,x_4x_2^2)$ and $I_2=(x_1x_4,x_2x_3^3)$. Now we consider 
   $$ S_1=\{x_3x_4\},~
       S_2'=\{x_1x_4,x_3x_2^2,x_2x_3^3\},
       ~S_3'=\{x_4x_2^2\},
      ~ S_4'=\emptyset.$$
   It is easy to show that $HS_1(I_1)+I_2=(S_1'\cup S_2'\cup S_3' \cup S_4')$ has linear quotients.
\end{example}

\section{Higher homological shift ideals of $I(D)$} \label{sec4} 

In this section, we investigate homological linear quotients of weighted oriented graphs. The main objective of this section is to characterize the radicals of homological shift ideals of weighted oriented graphs in terms of the homological shift ideals of their underlying simple graphs. We also characterize weighted oriented trees whose edge ideals have homological linear quotients. In this section, for any monomial $u={\bf x}^{\bf a}\in R$, we denote $\sqrt{u} := \prod_{a_i > 0} x_i$. 
\vskip 0.2cm
\noindent  
The following lemma will be used in the sequel.
\begin{lemma}\label{supp}
    Suppose that $u,v$ be two monomials in $R$ such that $\sqrt{u}\neq \sqrt{v}$. Then $$(\sqrt{u}:\sqrt{v})~\text{divides}~(u:v).$$
\end{lemma}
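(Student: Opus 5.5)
The colon $u:v$ of two monomials means $u/\gcd(u,v)$. So I will compute both colons exponentwise and compare supports. Write $u=\mathbf{x}^{\mathbf a}$ and $v=\mathbf{x}^{\mathbf b}$. Then
\[
u:v=\frac{u}{\gcd(u,v)}=\prod_i x_i^{\max(a_i-b_i,0)} .
\]
In the same way, $\sqrt{u}:\sqrt{v}=\prod_i x_i^{\max(\epsilon_i-\delta_i,0)}$, where $\epsilon_i=1$ if $a_i>0$ and $0$ otherwise, and $\delta_i$ is defined analogously from $b_i$.

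Hence $\sqrt{u}:\sqrt{v}$ is the squarefree monomial $\prod_{i\in A} x_i$, where
\[
A=\{\,i : a_i>0,\ b_i=0\,\}=\supp(\sqrt u)\setminus\supp(\sqrt v).
\]
It remains to show that each $x_i$ with $i\in A$ divides $u:v$. For such $i$ we have $a_i>0$ and $b_i=0$, so the exponent of $x_i$ in $u:v$ is $\max(a_i-0,0)=a_i\ge 1$.

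Since $\sqrt{u}:\sqrt{v}$ is squarefree and every variable in it appears in $u:v$ with positive exponent, the monomial $\sqrt{u}:\sqrt{v}$ divides $u:v$.

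The hypothesis $\sqrt u\neq\sqrt v$ is not needed for divisibility. If $\sqrt u=\sqrt v$, then $A=\emptyset$ and $\sqrt u:\sqrt v=1$, which divides anything. Presumably the hypothesis is included only so that the statement is applied in situations where $\sqrt u:\sqrt v$ is a nontrivial monomial, typically a variable.

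There is no real obstacle here. The only point requiring care is to fix the convention $u:v=u/\gcd(u,v)$, as it is used throughout the paper, and to note that the radical of a monomial records exactly its support.
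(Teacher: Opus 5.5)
Your proof is correct and follows essentially the same route as the paper: you compute both colons exponentwise as $u/\gcd(u,v)$, identify $\sqrt{u}:\sqrt{v}$ with the squarefree monomial on $\supp(u)\setminus\supp(v)$, and note that each such variable occurs in $u:v$ with exponent $a_i\ge 1$. You make that last step explicit, whereas the paper leaves it implicit, and you are also right that the hypothesis $\sqrt{u}\neq\sqrt{v}$ is not needed for the divisibility.
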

\begin{proof}
  Let $u=x_1^{a_1}\cdots x_n^{a_n}$ and $v=x_1^{b_1}\cdots x_n^{b_n}$. Then $${u}=\prod_{i\in \text{supp(u)}} x_i^{\alpha_i}~~ \text{and} ~~{v}=\prod_{i\in\text{supp(v)}} x_i^{\beta_i}, \mbox{ where } \alpha_i \in \{a_1,\ldots,a_r\} \mbox{ and } \beta_i \in \{b_1,\ldots,b_r\}.$$ Therefore, $\displaystyle u:v=\frac{u}{\text{gcd}(u,v)}=\prod_{i\in \text{supp(u)}} x_i^{\alpha_i-\gamma_i}$, ~~where~~~$\displaystyle\text{gcd}(u,v)=\prod _{i\in \text{supp(u)}\cap \text{supp(v)}}x_i^{\gamma_i}$ and $\gamma_i=\text{min}\{{\alpha_i,\beta_i}\}.$ As $\sqrt{u}=\prod_{i\in \text{supp}(u)} x_i$ and $\sqrt{v}=\prod_{i\in \text{supp}(v)} x_i$ then $\sqrt{u}:\sqrt{v}=\frac{\sqrt{u}}{\text{gcd}(\sqrt{u},\sqrt{v})}=\prod _{i\in \text{supp(u)}\setminus \text{supp(v)}}x_i$. This implies that $(\sqrt{u}:\sqrt{v}) \text{ divides } (u:v) $.
\end{proof}

The following proposition gives a sufficient condition for the radical of a monomial ideal to have linear quotients whenever the ideal itself has linear quotients.

\begin{proposition}\label{sqrt}
Let $I=(u_1,\ldots,u_m)$ be a monomial ideal in $R$. Suppose $I$ has linear quotient property. 
Assume $\sqrt{u_i}, \sqrt{u_j} \in \mathcal{G}(\sqrt{I})$, for all $i,j$ with $\sqrt{u_i}\neq \sqrt{u_j}$. Then $\sqrt{I}$ has linear quotient property.
\end{proposition}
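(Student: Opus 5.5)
Fix a linear quotient order $u_1<\cdots<u_m$ of $\mathcal{G}(I)$. The plan is to carry this order over to $\sqrt{I}$. Recall $\sqrt{I}=(\sqrt{u_1},\ldots,\sqrt{u_m})$. The hypothesis says that every $\sqrt{u_i}$ that is not repeated lies in $\mathcal{G}(\sqrt{I})$, so the distinct radicals $\sqrt{u_i}$ form exactly $\mathcal{G}(\sqrt{I})$. (When $m\ge 2$ every $\sqrt{u_i}$ differs from some other one; the case where all radicals coincide is trivial, since then $\sqrt{I}$ is principal.)

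We order $\mathcal{G}(\sqrt{I})$ by first occurrence. Say $\sqrt{u_i}$ comes before $\sqrt{u_j}$ when the least index $a$ with $\sqrt{u_a}=\sqrt{u_i}$ is smaller than the least index $b$ with $\sqrt{u_b}=\sqrt{u_j}$. We must show that for each $j$ which is a first occurrence, the ideal $(\sqrt{u_a} : a<j,\ \sqrt{u_a}\neq\sqrt{u_j}):\sqrt{u_j}$ is generated by variables. Since these colons are generated by the monomials $\sqrt{u_a}:\sqrt{u_j}$, it suffices to do the following: for each earlier $a$ with $\sqrt{u_a}\neq\sqrt{u_j}$, find a variable $x$ dividing $\sqrt{u_a}:\sqrt{u_j}$ with $x=\sqrt{u_c}:\sqrt{u_j}$ for some $c<j$.

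Fix such $a<j$. By the linear quotient property of $I$, there is $c<j$ such that $u_c:u_j=x$ is a variable and $x$ divides $u_a:u_j$. The key step is to show that $x\notin\supp(u_j)$, which gives $\sqrt{u_c}:\sqrt{u_j}=x$. The difficulty is that $x$ might only witness an exponent increase, not a new support element. We would argue as follows. Since $\sqrt{u_a}\ne\sqrt{u_j}$ and $j$ is a first occurrence, choose the witness by applying the linear quotient property not to $u_a$ directly. Since $u_a\nmid u_j$ and $\supp(u_a)\not\subseteq\supp(u_j)$ in the relevant case, there is a variable $y\in\supp(u_a)\setminus\supp(u_j)$. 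Then $y^{\,\mathrm{something}}$ divides $u_a:u_j$, and the monomial $y\,u_j$ lies in $(u_1,\ldots,u_{j-1})$ whenever $y\in(u_1,\ldots,u_{j-1}):u_j$. Because this colon is generated by variables and contains $u_a:u_j$, some generating variable $z$ divides $u_a:u_j$. It is not automatic that $z=y$, and this is the main obstacle. We handle it by using Lemma~\ref{supp}: $\sqrt{u_a}:\sqrt{u_j}$ divides $u_a:u_j$, so we work with a single variable $y\in\supp(u_a)\setminus\supp(u_j)$ and ask whether $y\in(u_1,\ldots,u_{j-1}):u_j$. If it is not, the colon generators dividing $u_a:u_j$ are all in $\supp(u_j)$. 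We would then try to exploit the hypothesis that all $\sqrt{u_i}$ are minimal, together with the minimality of $j$ as first occurrence, to derive a contradiction.

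If that fails, the fallback is an induction on $|\supp(u_a)\setminus\supp(u_j)|$, replacing $u_a$ by $u_c$. If $x\in\supp(u_j)$, then $\supp(u_c)\subseteq\supp(u_j)$, so $\sqrt{u_c}\mid\sqrt{u_j}$. By minimality of $\sqrt{u_j}$ in $\mathcal{G}(\sqrt I)$, this forces $\sqrt{u_c}=\sqrt{u_j}$ with $c<j$, contradicting that $j$ is a first occurrence. Hence $x\notin\supp(u_j)$, so $\sqrt{u_c}:\sqrt{u_j}=x$. Moreover $x\mid u_a:u_j$ and $x\notin\supp(u_j)$ give $x\in\supp(u_a)\setminus\supp(u_j)$, so $x\mid\sqrt{u_a}:\sqrt{u_j}$. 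Here $\sqrt{u_c}\neq\sqrt{u_j}$ and $c<j$, so the first occurrence of $\sqrt{u_c}$ precedes $\sqrt{u_j}$.

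This argument in fact dissolves the obstacle without the induction, and completes the proof.
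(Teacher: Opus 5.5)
Your final paragraph is a correct proof and follows essentially the paper's route: order $\mathcal{G}(\sqrt{I})$ by first occurrence, take the witness $c<j$ with $u_c:u_j=x$ dividing $u_a:u_j$, and combine the first-occurrence property of $j$ with the minimality of $\sqrt{u_j}$ to get $\sqrt{u_c}:\sqrt{u_j}=x$, which divides $\sqrt{u_a}:\sqrt{u_j}$. Your contradiction argument also makes explicit the non-divisibility $\sqrt{u_c}\nmid\sqrt{u_j}$ that the paper needs in addition to Lemma~\ref{supp}, since that lemma alone only gives $\sqrt{u_c}:\sqrt{u_j}\mid x$; the exploratory middle part (the variable $y$ and the induction fallback) is unnecessary and should be deleted.
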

\begin{proof}
Suppose $u_1 < \dots < u_m$ is a linear quotient order of $I$. Let 
\begin{align*}
i_1 &:= \min\{\, j \mid \sqrt{u_j} = \sqrt{u_1} \,\} = 1, \\
i_2 &:= \min\{\, j \mid \sqrt{u_j} \neq \sqrt{u_{i_1}} \,\}, \\
&\ \ \vdots \\
i_s &:= \min\{\, j \mid \sqrt{u_j} \neq \sqrt{u_{i_1}}, \ldots, \sqrt{u_{i_{s-1}}} \,\}.
\end{align*}
 So $1=i_1 < \dots < i_s\leq m$. Since the radicals $\sqrt{u_{i_1}},\ldots,\sqrt{u_{i_s}}$ are pairwise distinct, the assumption ensures that none of them divides another. Hence,
$$
\sqrt{I}=(\sqrt{u_{i_1}},\ldots,\sqrt{u_{i_s}})
$$
is the minimal generating set of $\sqrt{I}$.  
\vskip 0.2cm
\noindent 
{\bf Claim:} $\sqrt{u_{i_1}}, \ldots, \sqrt{u_{i_s}}$ is a linear quotient order of $\sqrt{I}$.\\  
Let $i_j < i_k$. If $\sqrt{u_{i_j}}: \sqrt{u_{i_k}}$ is a variable, then noting to show. Assume  $\sqrt{u_{i_j}}: \sqrt{u_{i_k}}$ is not a variable. By \Cref{supp}, $u_{i_j}:u_{i_k}$ is also not a variable. Since $I$ has linear quotient property, then there exists $l<i_k$ such that $u_l:u_{i_k}=x$, a variable, such that $x \mid u_{i_j}:u_{i_k}$. Then $x \in \supp(u_{i_j})$. Since $i_k=\min\{j :\sqrt{u_j} \neq \sqrt{u_{i_1}},\ldots,\sqrt{u_{i_{k-1}}}\}$ then $\sqrt{u_l} \neq \sqrt{u_{i_k}} $, otherwise $\sqrt{u_l} = \sqrt{u_{i_k}} $ contradicts the minimality of $i_k$. Since $u_l:u_{i_k}=x$,  then by \Cref{supp}, we get $\sqrt{u_l}:\sqrt{u_{i_k}}=x$. This gives that $x \not \in \supp(\sqrt{u_{i_k}})$. Then we have $x \in \supp(\sqrt{u_{i_j}})$ and $x \not \in \supp(\sqrt{u_{i_k}})$. Therefore, $x \mid \sqrt{u_{i_j}}:\sqrt{u_{i_k}}$. This implies that $\sqrt{u_l}:\sqrt{u_{i_k}}=x\mid \sqrt{u_{i_j}}:\sqrt{u_{i_k}}$. Furthermore, we have $\sqrt{u_l}=\sqrt{u_{i_{\ell}}}$, for some $i_{\ell} < i_k$. This proves the claim and hence the proposition. 
\end{proof}

In \Cref{sqrt}, we can not drop the assumption that $\sqrt{u_i}, \sqrt{u_j} \in \mathcal{G}(\sqrt{I})$, for all $i,j$ with $\sqrt{u_i}\neq \sqrt{u_j}$. See the following example. 
\begin{example}
     Consider the $5$-cycle $\mathcal{C}_5$, whose edge ideal $$I=I(\mathcal{C}_5)=(x_1x_2,x_2x_3,x_3x_4,x_4x_5,x_1x_5).$$ Then $I^2$ has linear quotients, whereas $I=\sqrt{I^2}$ does not. 
\end{example}

The following corollary is an immediate consequence of \Cref{sqrt}.
\begin{corollary} \label{cor1}
    Let $I$ be a monomial ideal such that the support of all minimal generators of $I$ have the same size. If $I$ has linear quotients, then $\sqrt{I}$ has linear quotients.   
\end{corollary}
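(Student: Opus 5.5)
The plan is to reduce the corollary directly to \Cref{sqrt}. Let $\mathcal{G}(I)=\{u_1,\ldots,u_m\}$, ordered so that $u_1<\cdots<u_m$ is a linear quotient order. By hypothesis all supports $\supp(u_i)$ have a common cardinality, say $d$. Once we know that the hypothesis of \Cref{sqrt} holds, that proposition immediately gives that $\sqrt{I}$ has linear quotients. So the whole task is to check that hypothesis: for any $i,j$ with $\sqrt{u_i}\neq\sqrt{u_j}$, both $\sqrt{u_i}$ and $\sqrt{u_j}$ lie in $\mathcal{G}(\sqrt{I})$.

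First I will recall that $\sqrt{I}$ is generated by $\{\sqrt{u_1},\ldots,\sqrt{u_m}\}$. This holds for any monomial ideal: a monomial $f$ lies in $\sqrt{I}$ if and only if some power $f^N$ is divisible by some $u_i$, which happens if and only if $\supp(u_i)\subseteq\supp(f)$, i.e.\ $\sqrt{u_i}\mid f$. Consequently $\mathcal{G}(\sqrt{I})$ consists of exactly those $\sqrt{u_i}$ that are minimal under divisibility among $\sqrt{u_1},\ldots,\sqrt{u_m}$.

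Next I will show that every $\sqrt{u_i}$ is minimal. Each $\sqrt{u_i}$ is a squarefree monomial of degree $|\supp(u_i)|=d$. If $\sqrt{u_j}\mid\sqrt{u_i}$ with two squarefree monomials of the same degree, then $\sqrt{u_j}=\sqrt{u_i}$. Hence no $\sqrt{u_j}$ properly divides any $\sqrt{u_i}$, so every $\sqrt{u_i}$ belongs to $\mathcal{G}(\sqrt{I})$. In particular the hypothesis of \Cref{sqrt} is satisfied for every pair $i,j$ with $\sqrt{u_i}\neq\sqrt{u_j}$, and \Cref{sqrt} finishes the proof.

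I do not expect a real obstacle here. The only point needing care is the description of $\mathcal{G}(\sqrt{I})$ as the divisibility-minimal elements among the $\sqrt{u_i}$. Since this is a standard fact about radicals of monomial ideals, I would state it with the one-line justification above rather than cite it.
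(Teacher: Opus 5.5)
Your proposal is correct and follows the paper's route: the paper's proof is a one-line observation that equal support sizes make the hypothesis of \Cref{sqrt} hold. You supply the justification the paper leaves implicit. Each $\sqrt{u_i}$ is a squarefree monomial of the common degree $d$, so none properly divides another, and all of them lie in $\mathcal{G}(\sqrt{I})$.
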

\begin{proof}
    Note that if $I$ is monomial ideal such that the support of all minimal generators of $I$ have the same size, then the assumption of \Cref{sqrt} is satisfied.  
\end{proof}

\begin{corollary}\label{I(G)}
  Let $D$ be a weighted oriented graph with the underlying simple graph $G$. Suppose $I(D)$ has linear quotient property. Then $I(G)$ has linear quotient property.    
\end{corollary}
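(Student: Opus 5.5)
The plan is to apply \Cref{cor1} directly to $I=I(D)$. First I record the shape of the minimal generators. Every element of $\mathcal{G}(I(D))$ has the form $x_ix_j^{w_j}$ for an edge $(x_i,x_j)\in E(D)$, with $x_i\neq x_j$. So each minimal generator has support of size exactly two, namely $\{x_i,x_j\}$. Thus the hypothesis of \Cref{cor1} holds, and since $I(D)$ has linear quotients, $\sqrt{I(D)}$ has linear quotients.

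The remaining step is to identify $\sqrt{I(D)}$ with $I(G)$. For a monomial ideal, the radical is generated by the radicals of its minimal generators. Hence
\[
\sqrt{I(D)}=\bigl(\sqrt{x_ix_j^{w_j}} : (x_i,x_j)\in E(D)\bigr)=\bigl(x_ix_j : \{x_i,x_j\}\in E(G)\bigr)=I(G).
\]
Here the underlying simple graph $G$ has exactly one undirected edge for each oriented edge of $D$, and $D$ is simple, so no pair carries two orientations. The generators $x_ix_j$ are pairwise distinct squarefree quadrics, so none divides another. They therefore form $\mathcal{G}(I(G))$, consistent with the hypothesis of \Cref{sqrt}.

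Combining the two steps, $I(G)=\sqrt{I(D)}$ has linear quotients. The ordering is explicit: take the linear quotient order $u_1<\cdots<u_m$ of $I(D)$ and apply $\sqrt{\cdot}$. In the proof of \Cref{sqrt} we keep the first occurrence of each radical, but here $u\mapsto\sqrt{u}$ is already injective on $\mathcal{G}(I(D))$, since distinct edges give distinct radicals. So the order is simply $\sqrt{u_1}<\cdots<\sqrt{u_m}$.

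I expect no serious obstacle. The only point needing care is the identification $\sqrt{I(D)}=I(G)$, in particular that distinct generators of $I(D)$ have distinct radicals, and this follows from $D$ having at most one oriented edge between any two vertices.
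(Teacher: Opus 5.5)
Your proposal is correct and is the paper's proof: apply \Cref{cor1}, since every minimal generator $x_ix_j^{w_j}$ of $I(D)$ has support of size two, and then identify $\sqrt{I(D)}$ with $I(G)$. Your remark that $u\mapsto\sqrt{u}$ is injective on $\mathcal{G}(I(D))$, so that $\sqrt{u_1},\ldots,\sqrt{u_m}$ minimally generate $\sqrt{I(D)}=I(G)$, is exactly the point the paper makes in the last sentence of its proof.
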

\begin{proof}
     Since the support of all minimal generators of $I(D)$ have the same size $2$, the result follows from \Cref{cor1}. In fact, if $I(D)$ is minimally generated by $u_1,\ldots,u_m$, then $\sqrt{I(D)}$ is minimally generated by $\sqrt{u_1},\ldots,\sqrt{u_m}$. 
\end{proof}

We are now ready to prove the main result of this section.
\begin{theorem}\label{T-radicalhomological}
Let $D$ be a weighted oriented graph with the underlying simple graph $G$ such that $I(D)$ has linear quotient property. Then 
$$\sqrt{HS_k(I(D))}=HS_k(I(G)) \;\; \text{ for all } k\geq 0.$$    
\end{theorem}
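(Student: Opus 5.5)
We fix a linear quotient order $u_1<\cdots<u_m$ of $\mathcal{G}(I(D))$ and compare the two sides using \Cref{L-homologicalshift}. By the proof of \Cref{I(G)}, $\sqrt{u_1},\ldots,\sqrt{u_m}$ are exactly the minimal generators of $I(G)$, one per edge. The argument of \Cref{sqrt} shows that $\sqrt{u_1}<\cdots<\sqrt{u_m}$ is a linear quotient order of $I(G)$; here every $i_j=j$, since distinct generators of $I(D)$ come from distinct edges. The whole proof then reduces to the claim
$$\operatorname{set}_{I(D)}(u_i)=\operatorname{set}_{I(G)}(\sqrt{u_i})\quad\text{for all } i .$$
Granting this, \Cref{L-homologicalshift} gives $HS_k(I(D))=(x_Fu_i : F\subseteq \operatorname{set}_{I(D)}(u_i),\ |F|=k)$ and $HS_k(I(G))=(x_F\sqrt{u_i} : F\subseteq \operatorname{set}_{I(G)}(\sqrt{u_i}),\ |F|=k)$.

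First I will show a preliminary fact: no $x_k\in\operatorname{set}_{I(D)}(u_i)$ divides $u_i$. Write $u_i=x_ax_b^{w_b}$. Suppose $x_k\mid u_i$ and some $u_j$ with $j<i$ divides $x_ku_i$. Then $\supp(u_j)\subseteq\{x_a,x_b\}$, so $u_j$ corresponds to the same edge $\{x_a,x_b\}$. Since each edge contributes a single generator, this forces $u_j=u_i$, a contradiction. Consequently $\sqrt{x_Fu_i}=x_F\sqrt{u_i}$.

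Since the radical of a monomial ideal is generated by the radicals of its generators, $\sqrt{HS_k(I(D))}$ is generated by the monomials $x_F\sqrt{u_i}$ with $F\subseteq\operatorname{set}_{I(D)}(u_i)$ and $|F|=k$. By the claim this is exactly $HS_k(I(G))$. The case $k=0$ is just $\sqrt{I(D)}=I(G)$.

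It remains to prove the claim, in two inclusions.

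For ``$\subseteq$'': let $x_k\in\operatorname{set}_{I(D)}(u_i)$. Because the colon ideal is generated by variables, there is $j<i$ with $u_j:u_i=x_k$. Since $\sqrt{u_j}\neq\sqrt{u_i}$, \Cref{supp} shows that $\sqrt{u_j}:\sqrt{u_i}$ divides $x_k$. It is nontrivial, since different edges have incomparable supports, so $\sqrt{u_j}:\sqrt{u_i}=x_k$.

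For ``$\supseteq$'': let $\sqrt{u_j}:\sqrt{u_i}=x_k$ with $j<i$. Then the two edges share a vertex $y$, so $\supp(u_j)=\{x_k,y\}$ and $\supp(u_j:u_i)\subseteq\{x_k,y\}$. The monomial $u_j:u_i$ lies in $((u_1,\ldots,u_{i-1}):u_i)$, which is generated by variables. Hence some variable in $\operatorname{set}_{I(D)}(u_i)$ divides $u_j:u_i$, and that variable is $x_k$ or $y$. But $y\mid u_i$, so $y\notin \operatorname{set}_{I(D)}(u_i)$ by the preliminary fact. Therefore $x_k\in\operatorname{set}_{I(D)}(u_i)$.

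I expect the main subtlety to be this reverse inclusion. Here $u_j:u_i$ may be $x_k^{w}$ or $x_ky^{c}$ rather than a variable, and one must rule out that the witnessing variable is the shared vertex $y$. The preliminary fact about $\operatorname{set}_{I(D)}(u_i)$ being disjoint from $\supp(u_i)$ is exactly what handles this.
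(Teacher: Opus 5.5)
Your proof is correct. For the inclusion $\operatorname{set}_{I(G)}(\sqrt{u_i})\subseteq\operatorname{set}_{I(D)}(u_i)$, which is the heart of the claim, you take a genuinely different route from the paper.

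The paper fixes a \emph{degree-increasing} linear quotient order (\Cref{L-increasing-admissible}). Given $\sqrt{u_r}:\sqrt{u_i}=x_a$, it runs through the four possible orientations of the two edges. It uses the forbidden subgraphs of \Cref{corollary-2..5} and the bound $|N_D^+(x)\cap V^+|\le 1$ of \Cref{V+} to produce an explicit earlier generator with colon exactly $x_a$, for instance a degree-two edge $x_jx_a$.

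You instead use the linear quotient property directly. Since $u_j:u_i$ lies in the colon ideal, some variable of $\operatorname{set}_{I(D)}(u_i)$ divides it. Your preliminary observation $\operatorname{set}_{I(D)}(u_i)\cap\supp(u_i)=\emptyset$ then forces that variable to be $x_k$; the observation holds because distinct generators of $I(D)$ have distinct two-element supports.

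Your approach has several advantages. It works for an arbitrary linear quotient order. It needs no structural input about $D$: no forbidden subgraphs and no case analysis on weights or orientations. It also justifies the identity $\sqrt{x_Fu_i}=x_F\sqrt{u_i}$, which the paper's final chain of equalities uses without comment.

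In fact your argument proves a more general statement: $\sqrt{HS_k(I)}=HS_k(\sqrt{I})$ for every monomial ideal $I$ with linear quotients whose minimal generators have pairwise $\subseteq$-incomparable supports. The paper's approach does yield an explicit witness in the degree-increasing order, but it depends on the external results behind \Cref{corollary-2..5} and \Cref{V+}.
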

\begin{proof}
Let $D$ be a weighted oriented graph such that $I=I(D)$ has linear quotient property. By \Cref{corollary-2..5}, we have $D_i$ are not induced subgraphs of $D$ for $i\in \{1,\ldots,5\}$. Furthermore, by \Cref{L-increasing-admissible}, $I$ has a degree-increasing linear quotients order. Let $u_1,\ldots,u_m$ be a degree-increasing linear quotients order of $I$. Then by \cref{L-homologicalshift}, we have $$HS_k(I)=(u_iX_A~|~i=1,\ldots,m, ~A\subseteq \text{set}_I (u_i),~ |A|=k).$$
Note that the support of all minimal generators of $I$ have the same size. Therefore by \Cref{cor1}, $\sqrt{I}$ has a linear quotient order $\sqrt{u_1},\ldots,\sqrt{u_m}$. 
\vskip 0.2cm
\noindent 
\textbf{Claim:} $\text{set}_I (u_i)=\text{set}_{\sqrt{I}}(\sqrt{u_i})$, for all $i=1,\ldots,m$.
\vskip 0.2cm
Let $a\in \text{set}_{\sqrt{I}}(\sqrt{u_i})$ then there exists $r<i$ such that $\sqrt{u_r}:\sqrt{u_i}=x_a$. This implies that $u_r=x_ax_b^{w_b}~\text{or}~ x_bx_a^{w_a}$ and $u_i=x_bx_j^{w_j}~\text{or}~ x_jx_b^{w_b}$.

\noindent
\textbf{Case 1:} Let $u_r=x_ax_b^{w_b}$ and $u_i=x_bx_j^{w_j}$. If $w_b=1$ then $u_r:u_i=x_a$. Suppose $w_b>1$. Since $u_r< u_i$ in the degree-increasing linear quotient order of $I$, then $w_b\leq w_j$. Since $I(D)$ has linear quotient property, then $D_3$ can not be an induced subgraph of $D$. Therefore,  $(x_a,x_b)\in E(D)$ and $(x_b,x_j)\in E(D)$ gives $(x_j,x_a)\in E(D)$. This implies that $x_jx_a\in \mathcal{G}(I(D))$. Since $\text{deg}(x_jx_a)=2$, then there exist a $k<i$ such that $u_k=x_jx_a$ and  $u_k:u_i=x_a$. Thus, in either case, we get $a\in \text{set}_I(u_i)$, as required.
\vskip 0.2cm
\noindent
\textbf{Case 2:} Let $u_r=x_ax_b^{w_b}$ and $u_i=x_jx_b^{w_b}$. Then $u_r:u_i=x_a$ implies that $a\in \text{set}_I(u_i)$.
\vskip 0.2cm
\noindent
\textbf{Case 3:} Let $u_r=x_bx_a^{w_a}$ and $u_i=x_bx_j^{w_j}$. Now we show that $w_a=1$. Suppose $w_a>1$. Since $u_r< u_i$ in the degree-increasing linear quotient order of $I$, then $w_a\leq w_j$. Then $x_a,~x_j\in N_D^+(x_b)\cap V^+$, contradicting \Cref{V+}, $|N_D^+(x_b)\cap V^+|\leq 1$. Therefore $w_a=1$. This implies that $u_r:u_i=x_a$. Hence we get $a\in \text{set}_I(u_i)$. 
\vskip 0.2cm
\noindent
\textbf{Case 4:} Let $u_r=x_bx_a^{w_a}$ and $u_i=x_jx_b^{w_b}$. If $w_a=1$ then $u_r:u_i=x_a$. Suppose $w_a>1$. Since $u_r< u_i$ in the degree-increasing linear quotient order of $I$, then $w_b\leq w_j$. Since $I(D)$ has linear quotient property, then $D_3$ can not be an induced subgraph of $D$. Therefore,  $(x_b,x_a)\in E(D)$ and $(x_j,x_b)\in E(D)$ gives $(x_j,x_a)\in E(D)$. This implies that $x_jx_a\in \mathcal{G}(I(D))$. Since $\text{deg}(x_jx_a)=2$, then there exist a $k<i$ such that $u_k=x_jx_a$ and  $u_k:u_i=x_a$. Thus, in either case, we get $a\in \text{set}_I(u_i)$, as required.

Conversely let $a\in \text{set}_I (u_i)$ then there exists $r<i$ such that $u_r:u_i=x_a$. Since $u_r$ and $u_i$ are edges of $D$, then $\sqrt{u_r}\neq \sqrt{u_i}$. Then by Lemma \ref{supp}, we get $\sqrt{u_r}:\sqrt{u_i}=x_a$. This implies that $a\in \text{set}_{\sqrt{I}} (\sqrt{u_i})$.

Therefore, $\text{set}_I (u_i)=\text{set}_{\sqrt{I}} (\sqrt{u_i})$, for all $i=1,\ldots,m$. Thus, 
\begin{eqnarray*}
    \sqrt{HS_k(I(D))} &=& \sqrt{(u_iX_A~|~i=1,\ldots,m, ~A\subseteq \text{set}_I (u_i),~ |A|=k)} \\
    &=& (\sqrt{u_i}X_A~|~i=1,\ldots,m, ~A\subseteq \text{set}_I (u_i),~ |A|=k) \\
    &=& (\sqrt{u_i}X_A~|~i=1,\ldots,m, ~A\subseteq \text{set}_{\sqrt{I}} (\sqrt{u_i}),~ |A|=k )  \\
    &=& HS_k(I(G)). 
\end{eqnarray*}
\end{proof}

In Theorem \ref{T-radicalhomological}, we can not drop the assumption that $I(D)$ has linear quotients; see the below example. 

\begin{example}
Let $I(D)=(x_1x_2^3,x_2x_3^3,x_3x_4^3,x_5x_4^3)$. Then $I(D)$ does not have linear quotients. Furthermore, $\sqrt{HS_2(I(D))}=(x_1x_2x_3x_4)$ and $HS_2(I(G))=(x_1x_2x_3x_4x_5)$. Hence  $\sqrt{HS_2(I(D))}\neq HS_2(I(G))$.    
\end{example}

The following corollary shows that the homological linear quotient property is inherited by the underlying simple graph.
\begin{corollary}\label{HS_K(G)}
Let $D$ be a weighted oriented graph such that $I(D)$ has homological linear quotients. Then $I(G)$ has homological linear quotients.     
\end{corollary}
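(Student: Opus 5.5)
The plan is to reduce the statement directly to \Cref{T-radicalhomological} and \Cref{cor1}. If $I(D)$ has homological linear quotients, then in particular $HS_0(I(D))=I(D)$ has linear quotients. This is exactly the hypothesis of \Cref{T-radicalhomological}, so for every $k\geq 0$ we get
$$\sqrt{HS_k(I(D))}=HS_k(I(G)).$$
The question therefore becomes whether linear quotients pass from $HS_k(I(D))$ to its radical.

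For that step I will use \Cref{cor1}. I need all minimal generators of $HS_k(I(D))$ to have supports of the same size. Since $I(D)$ has linear quotients, \Cref{L-homologicalshift} describes the generators of $HS_k(I(D))$ as the monomials $x_F u$ with $u\in\mathcal{G}(I(D))$, $F\subseteq \operatorname{set}_{I(D)}(u)$ and $|F|=k$. Each such $u$ has support of size $2$. The elements of $\operatorname{set}_{I(D)}(u)$ index variables outside $\supp(u)$, because $u_r:u$ is a variable not dividing $u$. Hence $|\supp(x_F u)|=k+2$ for every generator, and in particular for every minimal one, as the paper already notes after \Cref{L-homologicalshift}. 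Applying \Cref{cor1} to $HS_k(I(D))$, which has linear quotients by hypothesis, shows that $\sqrt{HS_k(I(D))}=HS_k(I(G))$ has linear quotients for every $k\geq 0$. For $k=0$ this is \Cref{I(G)}, and $HS_k$ vanishes for $k>\mathrm{pd}$, so that range is trivial.

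The only point needing care is that \Cref{cor1} is applied to the \emph{minimal} generating set of $HS_k(I(D))$. The set of $x_F u$ from \Cref{L-homologicalshift} may contain non-minimal elements, but every minimal generator is among these monomials. The equal-support-size condition is therefore inherited by $\mathcal{G}(HS_k(I(D)))$, and I expect this step to go through without further obstacle.
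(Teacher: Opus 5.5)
Your proposal is correct and is exactly the paper's argument: since $I(D)$ has linear quotients, \Cref{T-radicalhomological} gives $\sqrt{HS_k(I(D))}=HS_k(I(G))$, and \Cref{cor1} then transfers linear quotients from $HS_k(I(D))$ to its radical. Your check that every generator $x_Fu$ has support of size $k+2$ makes explicit the equal-support hypothesis of \Cref{cor1}, which the paper leaves implicit. One caveat on that check: a variable $u_r:u=x_k$ need not avoid $\supp(u)$ for monomial ideals in general, and it does so here only because distinct generators of $I(D)$ have distinct two-element supports.
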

\begin{proof}
  Let $I(D)$ has homological linear quotients. In particular, $I(D)$ has linear quotients. Then by \Cref{T-radicalhomological}, we have $$\sqrt{HS_k(I(D))}=HS_k(I(G)) ~\text{for all}~ k\geq 0.$$ Therefore, by \Cref{cor1}, we get that $I(G)$ has homological linear quotients.   
\end{proof}

The following corollary gives necessary conditions for the edge ideal of a weighted oriented graph to have homological linear quotients.
\begin{corollary}\label{complement}
    Let $D$ be a weighted oriented graph and $I(D)$ has homological linear quotients. Then $D$ is $D_i$-free for any $i=1,\ldots,8$. Moreover, the underlying graph $G$ is co-chordal and $H_n^c$-free for any $n\geq 6$.
\end{corollary}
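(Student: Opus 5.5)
The plan is to assemble the statement from results already proved, in two independent parts.

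For the first part, assume $I(D)$ has homological linear quotients. Taking $k=0$ gives $HS_0(I(D))=I(D)$, so $I(D)$ has linear quotients. Then \Cref{corollary-2..5} applies directly and shows that $D_1,\ldots,D_5$ are not induced subgraphs of $D$.

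Next, taking $k=1$, the ideal $HS_1(I(D))$ has linear quotients, and $I(D)$ has linear quotients by the previous step. These are exactly the hypotheses of \Cref{D_i not induced}, which excludes $D_6,D_7,D_8$ as induced subgraphs. Concretely, that corollary combines \Cref{I(D')}, restricting to the induced subgraph, with \Cref{not induced}, which records the failure of linear quotients for $HS_1(I(D_i))$. Hence $D$ is $D_i$-free for all $i=1,\ldots,8$.

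For the second part, apply \Cref{HS_K(G)}: since $I(D)$ has homological linear quotients, so does $I(G)$. Then \Cref{Hs_k(I(G))} (from \cite{cdm26}) applies to the simple graph $G$ and yields that $G$ is co-chordal and $H_n^c$-free for every $n\ge 6$.

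There is no real obstacle here; the argument is bookkeeping. The one point to check is that \Cref{D_i not induced} needs $I(D)$ to have linear quotients as an explicit hypothesis, and this is supplied by the $k=0$ case. Everything else is a direct invocation of earlier results.
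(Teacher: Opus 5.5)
Your proposal is correct and matches the paper's proof. The paper also uses the $k=0$ case to get linear quotients for $I(D)$, excludes $D_1,\ldots,D_8$ via \Cref{corollary-2..5} and \Cref{D_i not induced}, and then combines \Cref{HS_K(G)} with \Cref{Hs_k(I(G))} to get that $G$ is co-chordal and $H_n^c$-free.
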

\begin{proof}
    Suppose $I(D)$ has homological linear quotients. In particular, $I(D)$ has the linear quotient property. Hence by \Cref{corollary-2..5} and \Cref{D_i not induced}, we have $D_i$ can not be induced subgraph of $D$ for any $i=1,\ldots,8$. Moreover, by \Cref{HS_K(G)}, we have $I(G)$ has homological linear quotients. By, \Cref{Hs_k(I(G))}, we have  the underlying graph $G$ of $D$ is co-chordal and $H_n^c$-free for any $n\geq 6$. 
\end{proof}

The following corollary shows that several important algebraic properties of homological shift ideals are inherited by the underlying simple graph.
\begin{corollary} \label{sec4cor1}
    Let $D$ be a weighted oriented graph and $G$, its underlying simple graph. Suppose  $I=I(D)$ has linear quotients. Let $k\geq 0$.
Suppose that $HS_k(I(D))$ satisfies one of the following properties:
\begin{enumerate}
    \item Cohen--Macaulay,
    \item Gorenstein,
    \item sequentially Cohen--Macaulay,
    \item generalized Cohen--Macaulay,
    \item Buchsbaum.
\end{enumerate}
Then $HS_k(I(G))$ satisfies the corresponding property.
\end{corollary}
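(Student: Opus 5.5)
The plan is to reduce each listed property to the corresponding property of the radical, using \Cref{T-radicalhomological}: since $I(D)$ has linear quotients, $\sqrt{HS_k(I(D))}=HS_k(I(G))$. So it suffices to show that if a monomial ideal $J$ has one of the properties (1)--(5), then $\sqrt{J}$ has it too, where ``$J$ has the property'' means $R/J$ has it.

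I would not attempt an abstract proof of this transfer for arbitrary monomial ideals. Each property in (1), (3), (4) and (5) is known to pass from $R/J$ to $R/\sqrt{J}$ for monomial $J$, via polarization and Hochster-type formulas, and I would take these as citations. Concretely:
\begin{itemize}
\item Cohen--Macaulayness: Herzog--Takayama--Terai show that $R/\sqrt{J}$ is Cohen--Macaulay whenever $R/J$ is. In the same paper, $\operatorname{depth} R/J\le \operatorname{depth} R/\sqrt{J}$ holds, with equal dimension.
\item Sequential Cohen--Macaulayness: the analogous radical-transfer statement is due to Herzog--Takayama--Terai and Sbarra.
\item Generalized Cohen--Macaulayness and Buchsbaumness: these also pass to the radical, by results of Herzog--Takayama--Terai and the follow-up work on Buchsbaum monomial ideals.
\end{itemize}
Once these are cited, the four cases are immediate.

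Gorensteinness needs more care, and I expect it to be the main obstacle. Gorenstein does not transfer to the radical in general, although the Cohen--Macaulay part does. Here I would exploit the specific structure of our ideals. Every minimal generator of $HS_k(I(D))$ has support of size $k+2$ (by \Cref{L-homologicalshift}, since the supports all have the same cardinality). By the proof of \Cref{T-radicalhomological}, the generators of $HS_k(I(G))$ are exactly the radicals of those of $HS_k(I(D))$, obtained by replacing each $x_\ell^{w_\ell}$ by $x_\ell$.

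I would first try to cite the known result that if $R/J$ is Gorenstein then $R/\sqrt{J}$ is Gorenstein (Herzog--Takayama--Terai prove this for monomial ideals as well). If that citation is not available, I would instead argue directly. Cohen--Macaulayness of $R/\sqrt{J}$ follows from (1). For the Gorenstein part, compare the last Betti numbers: polarization preserves Betti numbers, and the squarefree ideal is obtained from the polarized ideal by setting variables equal. The goal is the bound $\beta_{\mathrm{top}}(R/\sqrt J)\le\beta_{\mathrm{top}}(R/J)=1$, which gives type one.

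In summary, the proof consists of invoking \Cref{T-radicalhomological} and then the radical-transfer theorems property by property.
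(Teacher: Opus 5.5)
Your plan is the paper's: \Cref{T-radicalhomological} gives $\sqrt{HS_k(I(D))}=HS_k(I(G))$, and the radical-transfer step is exactly what the paper cites, namely \cite[Theorem 2.6]{htt05}, which it applies to all five properties at once. The Gorenstein case needs no separate treatment, and your remark that Gorensteinness ``does not transfer to the radical in general'' is false for monomial ideals: it follows from Cohen--Macaulay transfer, equality of dimensions, and the Herzog--Takayama--Terai inequality $\beta_i(\sqrt J)\le\beta_i(J)$. Your fallback sketch also misdescribes $\sqrt J$, since identifying variables in the polarization recovers $J$ itself, not $\sqrt J$.
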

\begin{proof}
    Follows from \Cref{T-radicalhomological} and \cite[Theorem 2.6]{htt05}. 
\end{proof}

The below example conveys that the converse of \Cref{sec4cor1} is need not be true.
\begin{example}
    Let $I(D)=(x_1x_3,x_1x_2^2,x_2x_3^3)$ be the edge ideal of a $3$-cycle such that $I(D)$ has linear quotients. Furthermore, $HS_1(I(G))=(x_1x_2x_3)$ and $HS_1(I(D))=(x_1x_2x_3^2,x_1x_2^2x_3)=$. Note that $HS_1(I(G))$ is Cohen-Macaulay whereas $HS_1(I(D))$ is not. 
\end{example}

To prove the characterization of weighted oriented trees with homological linear quotients, we first establish the following lemma and propositions.

\begin{lemma}\label{tree}
    Let $I =(x_1,\ldots,x_{s-1},x_s^{w_s}) \subseteq R$ be an ideal and $w_s\geq 1$. Then $HS_k(I)$ is a weakly polymatroidal ideal, for any $k\geq 0$. Furthermore, $I$ has homological linear quotients. 
    \end{lemma}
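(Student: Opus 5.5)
We first compute $HS_k(I)$ explicitly and then check the two properties on the explicit description. With the order $x_1<\cdots<x_{s-1}<x_s^{w_s}$, the ideal $I$ has linear quotients, and
\[
\operatorname{set}_I(x_i)=\{1,\ldots,i-1\}\ (i<s),\qquad \operatorname{set}_I(x_s^{w_s})=\{1,\ldots,s-1\}.
\]
Hence \Cref{L-homologicalshift} gives
\[
HS_k(I)=\bigl(x_F \;\big|\; F\subseteq[s-1],\ |F|=k+1\bigr)+\bigl(x_F\,x_s^{w_s} \;\big|\; F\subseteq[s-1],\ |F|=k\bigr).
\]
Write $V_{k+1}$ for the squarefree Veronese ideal of degree $k+1$ in $x_1,\ldots,x_{s-1}$, and $V_k$ for the one of degree $k$. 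Then $HS_k(I)=V_{k+1}+x_s^{w_s}V_k$. If $w_s=1$, this is simply the squarefree Veronese ideal of degree $k+1$ in $x_1,\ldots,x_s$, which is polymatroidal and hence weakly polymatroidal.

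For the weakly polymatroidal property, we use the lex order induced by $x_1>\cdots>x_s$. Take $v<_{\mathrm{lex}}u$ in $\mathcal G(HS_k(I))$ that first differ at position $t$, with $a_t>b_t$. Then $a_t\ge1$, and $t\le s-1$, because at $t=s$ both monomials would agree on all earlier coordinates. Since the $x_t$-exponent is $0$ or $1$ for $t<s$, we have $x_t\nmid v$ and $x_t\mid u$.

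We split into two cases according to whether $x_s\mid v$.
\begin{enumerate}
\item[(a)] Suppose $x_s\nmid v$, so $v=x_G$ with $|G|=k+1$ and $G\subseteq[s-1]$. Since $u$ and $v$ agree before $t$ and $u$ has at most $k+1$ factors among $x_1,\ldots,x_{s-1}$, the set $G$ must contain some $j>t$. Then $x_tv/x_j=x_{G\cup\{t\}\setminus\{j\}}\in HS_k(I)$.
\item[(b)] Suppose $v=x_Gx_s^{w_s}$ with $|G|=k$. If $G$ has an element $j>t$ with $j<s$, exchange it as in (a). Otherwise all of $G$ lies before $t$, so $G$ is contained in the common prefix; then $u$ contains $x_G x_t$ and also needs $x_s^{w_s}$ or one more index $j'>t$. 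In either situation, taking $j=s$ gives $x_t v/x_s=x_{G\cup\{t\}}x_s^{w_s-1}$, which is divisible by $x_{G\cup\{t\}}\in V_{k+1}$.
\end{enumerate}
In both cases the condition holds with some $j>t$. The main obstacle is exactly this boundary case, where the exchange must use $x_s$ itself. The proof has to make sure that dividing by a single $x_s$ still leaves a monomial in $HS_k(I)$, even though $w_s>1$. It does, because $x_{G\cup\{t\}}$ already lies in $V_{k+1}$.

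Finally, weakly polymatroidal ideals have linear quotients with respect to the lex order (a result of Kokubo–Hibi / Mohammadi). Applying this to each $HS_k(I)$, every homological shift ideal has linear quotients, so $I$ has homological linear quotients. Alternatively, one can check this directly: first order the generators of $V_{k+1}$ in squarefree-lex order, then order the generators of $x_s^{w_s}V_k$ in lex order; the colon ideals are generated by variables by the same exchange argument.
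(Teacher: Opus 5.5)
Your proof is correct and follows essentially the paper's route. You describe $HS_k(I)$ via \Cref{L-homologicalshift} as the squarefree Veronese ideal of degree $k+1$ in $x_1,\ldots,x_{s-1}$ plus $x_s^{w_s}$ times the one of degree $k$, check the weakly polymatroidal exchange condition directly, and conclude linear quotients. The key move in both is exchanging with $x_s$ itself when the lex-smaller generator is $x_Gx_s^{w_s}$ with $G$ inside the common prefix. Your split by whether $x_s\mid v$ is a more uniform way of organizing the paper's mixed-pair cases.

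One harmless slip: in case (b), $u$ is in fact forced to equal $x_{G\cup\{t\}}$. So your remark that $u$ ``also needs $x_s^{w_s}$ or one more index $j'>t$'' is wrong, but your argument never uses it.
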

\begin{proof} Fix $k\geq 0$. Let $I=(x_1,\ldots,x_{n-1},x_s^{w_s})$. Then we have $I=HS_0(I)$ has linear quotients. Assume $i\geq 1$. Note that 
    $\text{set}_I(x_i)=\{1,\ldots,{i-1}\}$ for all $i\in\{1,\ldots,s-1\}$ and $\text{set}_I(x_s^{w_s})=\{1,\ldots,{s-1}\}$. Then by \Cref{L-homologicalshift},  $HS_k(I)=HS_k(x_1,\ldots,x_{s-1})+A$, where 
  \begin{align*}
A &= (x_{i_1}\ldots x_{i_k}x_s^{w_s}\mid 1\leq i_1<\cdots <i_k\leq s-1) \text{ and } \\
HS_k(x_1,\ldots,x_{s-1}) 
&= x_{k+1}(x_1\ldots x_k) 
+ x_{k+2}(x_{i_1}\ldots x_{i_k}\mid 1\leq i_1<\cdots <i_k\leq k+1) \\
&\quad + \dots + x_{s-1}(x_{i_1}\ldots x_{i_k}\mid 1\leq i_1<\cdots <i_k\leq s-2)\\
&=(x_{i_1}\ldots x_{i_{k+1}}\mid 1\leq i_1<\cdots <i_{k+1}\leq s-1)
\end{align*}
    Note that $HS_k(x_1,\ldots,x_{s-1})$ is the square-free Veronese ideal of degree $k+1$ in the variables $x_1,\ldots,x_{s-1}$. Thus, by \cite[Theorem 4.2]{hh06}, it is a weakly polymatroidal ideal. It is easy to see that $A$ is a weakly polymatroidal ideal. Now, we will show that $HS_k(x_1,\ldots,x_{s-1})+A$ is weakly polymatroidal. Let  $u=x_{p_1}\cdots x_{p_k}x_s^{w_s}\in A$ and $v=x_{q_1}\cdots x_{q_k}x_{q_{k+1}}\in HS_k(x_1,\ldots,x_{s-1})$, where $1\leq p_1<\cdots <p_k\leq s-1$ and $1\leq q_1<\cdots <q_k < q_{k+1}\leq s-1$. The following cases arise: 
    \vskip 0.2cm
    \noindent 
   {\bf Case 1:}  Suppose $u\underset{lex}{>}v$. Then there exists an index $r$ such that  $p_1=q_1,\ldots,p_{r-1}=q_{r-1}$ and $p_r<q_r$. Now we show that there exists $x_j \underset{lex}{<} x_{p_r}$ such that $\frac{v}{x_j}x_{p_r} \in HS_k(I)$. If $r>k$, then $u=x_{p_1}\cdots x_{p_k}x_s^{w_s}$ and $v=x_{p_1}\cdots x_{p_k}x_{q_{k+1}}$, which implies that $u\underset{lex}{<}v$, a contradiction. 
    If $r=k$, then $p_k<q_k$ and $p_i=q_i$ for all $i\in \{1,\ldots,k-1\}$. This implies that  $x_{q_{k+1}} \underset{lex}{<} x_{p_r}$ and $\frac{v}{x_{q_{k+1}}}x_{p_r} =x_{q_1} \cdots x_{q_{k}}x_{p_r} \in HS_k(x_1,\ldots,x_{s-1}) \subseteq HS_k(I)$, as required. Now, assume $r < k$. That is, $q_r < q_k$. This implies that $x_{q_k} \underset{lex}{<} x_{q_r}$. Then $\frac{v}{x_{q_k}}x_{p_r}=x_{p_1}\cdots x_{p_r} x_{q_r}\cdots x_{q_{k-1}}x_{q_k}\in HS_k(x_1,\ldots,x_{s-1})$. 
    \vskip 0.2cm 
    \noindent
    {\bf Case 2:} Suppose $u\underset{lex}{<}v$. Then there exists an index $r\in \{1,\ldots,k+1\}$ such that  $p_1=q_1,\ldots,p_{r-1}=q_{r-1}$ and $q_r<p_r$. Now we show that there exists $x_j \underset{lex}{<} x_{q_r}$ such that $\frac{u}{x_j}x_{q_r} \in HS_k(I)$. Suppose $r=k+1$. Since $x_{q_r}=x_{q_{k+1}} \underset{lex}{>} x_s$, then $\frac{u}{x_n}x_{q_{k+1}}=x_{q_1}\cdots x_{q_k}x_{q_{k+1}}x_s^{w_s-1}\in HS_k(I)$, as required. If $r\neq k+1$ i.e., $r<k$, then $q_r < p_r \leq p_k$. This implies that $x_{p_k}\underset{lex}{\geq}x_{p_r}\underset{lex}{>} x_{q_r}$ and $\frac{u}{x_{p_k}}x_{q_r}\in A$, as required. 
    \vskip 0.2cm 
    \noindent 
    Thus $HS_k(I)$ is a weakly polymatroidal ideal. Consequently, $HS_k(I)$ has linear quotients. Therefore, $I$ has homological linear quotients.
\end{proof}

\begin{proposition}\label{tree1}
     Let $I=x_1^{w_1}(x_2,\ldots, x_m)+x_1(x_{m+1},\ldots,x_s) \subset R=\mathbb{K}[x_1,\ldots,x_n]$ be an ideal. Then $I$ has homological linear quotients.
\end{proposition}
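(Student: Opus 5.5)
The plan is to factor out $x_1$ and reduce to an ideal whose homological shift ideals can be written down explicitly via \Cref{L-homologicalshift}. Write $I=x_1J$ with $J=(x_{m+1},\ldots,x_s)+x_1^{w_1-1}(x_2,\ldots,x_m)$. Multiplication by the monomial $x_1$ shifts the whole multigraded minimal free resolution by $x_1$. Hence $HS_k(I)=x_1HS_k(J)$ for all $k$. Moreover, $x_1K$ has linear quotients if and only if $K$ does, since all colons are unchanged. So it suffices to show that $J$ has homological linear quotients. If $w_1=1$, then $J$ is generated by variables and we are done by \Cref{tree}, or directly because the $HS_k$ are squarefree Veronese ideals. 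So assume $w_1\ge 2$ and set $w=w_1-1$.

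First, $J$ has linear quotients with respect to the order
$$x_{m+1},\ldots,x_s,\;x_1^{w}x_2,\ldots,x_1^{w}x_m.$$
For this order, $\operatorname{set}_J(x_j)=\{m+1,\ldots,j-1\}$ and $\operatorname{set}_J(x_1^wx_i)=\{m+1,\ldots,s\}\cup\{2,\ldots,i-1\}$. Indeed, $x_j:x_1^wx_i=x_j$ and $x_1^wx_{i'}:x_1^wx_i=x_{i'}$. By \Cref{L-homologicalshift}, we then get
$$HS_k(J)=x_1^{w}A_k+B_k.$$
Here $A_k$ is the squarefree Veronese ideal of degree $k+1$ in $x_2,\ldots,x_s$, and $B_k$ is the squarefree Veronese ideal of degree $k+1$ in $x_{m+1},\ldots,x_s$. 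The point is that for $u\in\mathcal G(A_k)$ with some $x_i$, $2\le i\le m$, in its support, take the largest such $i$; then $u=x_Fx_i$ with $F\subseteq\operatorname{set}_J(x_1^wx_i)$. Also $B_k\subseteq A_k$, and $\mathcal G(HS_k(J))=\mathcal G(B_k)\sqcup\{x_1^wu: u\in\mathcal G(A_k)\setminus\mathcal G(B_k)\}$.

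Next, I would order the generators of $HS_k(J)$ as follows. First come the generators of $B_k$ in a linear quotient order of the squarefree Veronese ideal \cite[Corollary 4.2]{hmmz21}. Then come the elements $x_1^wu$ with $u\in\mathcal G(A_k)\setminus\mathcal G(B_k)$, sorted by increasing $|\supp(u)\cap\{x_2,\ldots,x_m\}|$ and, within each level, lexicographically. Colons among the $B_k$ part are fine. For later generators, note that $b:x_1^wu=b:u$ for $b\in B_k$ and $x_1^wu':x_1^wu=u':u$. So everything reduces to a statement about squarefree monomials of degree $k+1$ in $x_2,\ldots,x_s$: for $v$ earlier than $u$, we must find $x_j\mid v:u$ with $x_j\notin\supp(u)$ and an earlier generator equal to $x_1^{\epsilon}\,x_ju/x_i$ for some $x_i\in\supp(u)\setminus\supp(v)$.

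This exchange step is the main obstacle. I would argue as follows. If $v$ has strictly fewer variables from $\{x_2,\ldots,x_m\}$ than $u$, then some $x_i\in\supp(u)\setminus\supp(v)$ lies in $\{x_2,\ldots,x_m\}$. Choose $x_j\in\supp(v)\setminus\supp(u)$; the swap $x_ju/x_i$ lowers (or keeps) the count. If it lowers the count, the swapped element comes earlier; in particular, if the count drops to zero it lies in $B_k$. If the count is preserved, then $x_j\in\{x_2,\ldots,x_m\}$, and I would choose the pair $(i,j)$ so that the swap is lex-earlier. If $v$ has the same count, the lex exchange property of squarefree Veronese ideals (polymatroidal, \cite[Theorem 4.2]{hh06}) provides the needed swap within the same level. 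Checking that these choices are always possible, especially when $x_j$ must be taken from $\{x_{m+1},\ldots,x_s\}$, is where I expect care is needed. If the order proves awkward, the fallback is to verify directly that $HS_k(J)$ is weakly polymatroidal for a suitable variable ordering ($x_{m+1}>\cdots>x_s>x_2>\cdots>x_m>x_1$), in the style of \Cref{tree}.
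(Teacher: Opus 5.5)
Your proposal is correct, and it takes a genuinely different route from the paper.

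\textbf{The paper's route.} The paper never writes down $HS_k(I)$. It inducts on $s$ via the vertex splitting $I=x_{m+1}(x_1)+I_2$, with $I_2=x_1(x_{m+2},\ldots,x_s)+x_1^{w_1}(x_2,\ldots,x_m)$. Then \Cref{betti split} gives $HS_k(I)=x_{m+1}HS_{k-1}(I_2)+HS_k(I_2)$ for $k\ge 1$, and \Cref{sum} glues the two pieces using $HS_k(I_2)\subseteq HS_{k-1}(I_2)$. This is short and stays inside the paper's splitting machinery, but it says nothing about the generators.

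\textbf{What your route buys.} Your factorization $I=x_1J$ yields the closed form $HS_k(I)=x_1^{w_1}A_k+x_1B_k$ together with an explicit linear quotient order. The price is the exchange bookkeeping, which is easier than you feared. Write $P=\{x_2,\ldots,x_m\}$ and $Q=\{x_{m+1},\ldots,x_s\}$, and let $c$ count variables from $P$.

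\textbf{Cross-level case.} If $c(v)<c(u)$ (this includes $v\in B_k$, where $c(v)=0$), equal degrees force some $x_i\in(\supp(u)\setminus\supp(v))\cap P$ and some $x_j\in(\supp(v)\setminus\supp(u))\cap Q$. So you can always take $x_j\in Q$, the swap $x_ju/x_i$ always lowers the level, and your ``count preserved'' subcase never arises. When $c(u)=1$, the swapped monomial lies in $B_k$ and is used without the factor $x_1^{w}$.

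\textbf{Same-level case.} If $c(v)=c(u)$ and $u,v$ differ in both blocks, the same level-lowering swap exists. If they differ in only one block, order each level by decreasing lex. Then the first differing variable $x_j$ lies in $v$ and in that block. Exchanging it for any $x_i\in\supp(u)\setminus\supp(v)$ in the same block gives a lex-larger element of the same level.

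\textbf{Citation.} A level is the generating set of a product of two squarefree Veronese ideals in disjoint variables, not of a single squarefree Veronese ideal. So the result you cite for squarefree Veronese ideals is not literally the right justification there. The case split above makes that citation, and your weakly polymatroidal fallback, unnecessary.
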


\begin{proof}
  Let $I=x_1^{w_1}(x_2,\ldots, x_m)+x_1(x_{m+1},\ldots,x_s)$. To prove $HS_k(I)$ has linear quotients for all $k\geq 0$, we use induction on $s$. Note that $I$ is vertex-splittable with the vertex splitting $I=x_{m+1}I_1+I_2$, where $I_1=(x_1)$ and $I_2=x_1(x_{m+2},\ldots,x_s)+x_1^{w_1}(x_2,\ldots,x_m)$. Thus $HS_0(I)=I$ has linear quotients. Assume $k\geq 1$. By \Cref{betti split}, we get $HS_k(I)=x_{m+1}(HS_k(I_1)+HS_{k-1}(I_2))+HS_k(I_2)=x_{m+1}HS_{k-1}(I_2)+HS_k(I_2)$, because $HS_k(I_1)=0$. By induction $HS_k(I_2)$ has linear quotients for all $k\geq 0$.  Since $HS_k(I_2)\subset HS_{k-1}(I_2)$, then by \Cref{sum} we get $HS_k(I)$ has linear quotients. Therefore, $I$ has homological linear quotients.
\end{proof}

The following lemma provides a necessary and sufficient condition for a weighted oriented star graph to have homological linear quotients.
\begin{proposition}\label{star}
 Let $D$ be a weighted oriented star graph with the underlying simple graph $G$ as in \Cref{fig9}. Then $HS_k(I(D))$ has linear quotients for all $k\geq 0$ $\iff$ $D_1$ and $D_2$ are not induced subgraphs of $D$.
\end{proposition}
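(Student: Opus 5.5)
Let $x_1$ be the center of the star and $x_2,\ldots,x_s$ its leaves, so $I(D)$ is generated by one monomial for each edge $\{x_1,x_i\}$. For the forward direction, if $HS_k(I(D))$ has linear quotients for all $k$, then in particular $I(D)=HS_0(I(D))$ has linear quotients. \Cref{corollary-2..5} then says $D_1$ and $D_2$ are not induced subgraphs of $D$. So this direction is immediate.

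For the converse, assume $D$ is $D_1$- and $D_2$-free and write down the generators. Each edge oriented from $x_1$ to $x_i$ gives $x_1x_i^{w_i}$, and each edge oriented from $x_i$ to $x_1$ gives $x_ix_1^{w_1}$, since leaves pointing into the center are sources. Every induced $P_3$ in a star has the form $x_i - x_1 - x_j$, which lets us read off the constraints.
\begin{itemize}
\item If some edge is $(x_i,x_1)$ with $w_1>1$, then forbidding $D_1$ (the path $x_i\to x_1\to x_j$ with $w_1,w_j>1$) forces $w_j=1$ for every out-neighbour $x_j$ of $x_1$.
\item Forbidding $D_2$ (the path $x_i\leftarrow x_1\to x_j$ with $w_i,w_j>1$) forces at most one out-neighbour of $x_1$ to have weight $>1$.
\end{itemize}
This leaves two shapes.

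Case A: $w_1=1$, or $x_1$ has no in-neighbours. Then $I(D)=x_1(x_2,\ldots,x_{s-1},x_s^{w_s})$ after relabeling, where $x_s$ is the unique heavy out-neighbour (if any) and every other factor is linear. Multiplication by the monomial $x_1$ shifts the multigraded resolution, so $HS_k(x_1J)=x_1HS_k(J)$, and linear quotients are preserved under multiplying all generators by a common monomial. By \Cref{tree}, $J=(x_2,\ldots,x_{s-1},x_s^{w_s})$ has homological linear quotients, hence so does $I(D)$.

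Case B: $w_1>1$ and $x_1$ has in-neighbours $x_2,\ldots,x_m$. Then every out-neighbour has weight $1$, so
\[
I(D)=x_1^{w_1}(x_2,\ldots,x_m)+x_1(x_{m+1},\ldots,x_s).
\]
This is exactly the ideal in \Cref{tree1}, which has homological linear quotients.

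The main obstacle is making the case analysis exhaustive. One has to check carefully that the forbidden configurations really reduce $D$ to Case A or Case B, including the degenerate situations with no in-neighbours or no out-neighbours. One also has to verify the routine fact $HS_k(uJ)=u\,HS_k(J)$ for a monomial $u$ together with the transfer of linear quotients. Everything else comes directly from earlier results.
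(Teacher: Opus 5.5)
Your proof is correct and follows essentially the same route as the paper. The forward direction goes through \Cref{corollary-2..5}; for the converse, $D_1$- and $D_2$-freeness reduce $I(D)$ either to a monomial multiple of $(x_2,\ldots,x_{s-1},x_s^{w_s})$, handled by \Cref{tree}, or to the ideal of \Cref{tree1}. Your case split is tidier than the paper's, because you apply \Cref{tree1} directly where the paper re-splits off an out-neighbour and invokes \Cref{vertex splittable} and \Cref{sum}. The one case to spell out is the sink with $w_1>1$ (no out-neighbours): it is only a degenerate instance of \Cref{tree1}, and it is most simply handled by the same shift argument $HS_k(x_1^{w_1}J)=x_1^{w_1}HS_k(J)$ you use in Case A, which is what the paper does.
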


\begin{proof}
Let  $HS_k(I(D))$ has linear quotients for all $k\geq 0$. In particular, $I(D)$ has linear quotients. Then by \Cref{corollary-2..5}, we have $D_1,D_2$ are not induced subgraphs of $D$.

Conversely let, $D$ be a star graph with center $x_1 \in V(D)$ which is a sink. Then  $I(D)=x_1^{w_1}(x_2,\ldots, x_n)$. Then by \cite[Proposition 1.7]{hmmz21}, we get $HS_k(I)=HS_0(x_1^{w_1})HS_k(J)=x_1^{w_1}HS_k(J)$, where $J=(x_2,\ldots,x_n)$. By \cref{tree}, we have $HS_k(J)$ has linear quotients. This gives that $HS_k(I)$ has linear quotients. 

Let $x_1 \in V(D)$ be a non-sink vertex. Then $(x_1,x_i)\in E(D)$, for some $i\in\{2,\ldots,n\}$. Assume $w_i>1$. Then $x_i$ is not a source vertex. This gives that $(x_1,x_i)\in E(D)$. Now onwards we have $(x_1,x_i)\in E(D)$. Then $w_j=1$ for all $j\in\{2,\ldots,n\}\setminus \{i\}$, otherwise  the induced subgraph $N_D[\{x_1,x_i,x_j\}]$ is same as $D_2$, which is contradiction. If $w_1>1$, then $x_1$ is not a source vertex. This gives that $(x_j,x_1)\in E(D)$. Then the induced subgraph $N_D[\{x_1,x_i,x_j\}]$ is same as $D_1$, which is contradiction. Therefore $w_1=1$. Thus we have $I(D)=x_1(x_1,\ldots,x_i^{w_i},\ldots,x_n)$, which is vertex splitting of $I(D)$. By \Cref{betti split}, we have $HS_k(I(D))=x_1HS_k((x_1,\ldots,x_i^{w_i},\ldots,x_n))$. Then by \Cref{tree}, we have $HS_k((x_1,\ldots,x_i^{w_i},\ldots,x_n))$ has linear quotients for all $k\geq 0$. Therefore $HS_k(I(D))$  has linear quotients for all $k\geq 0$.

If $w_i=1$. Then $I(D)=x_iI_1+I_2$ is a vertex-splitting of $I(D)$, where $I_1=(x_1)$ and $I_2=x_1^{w_1}(x_l\mid x_l \in N_D^-(x_1))+x_1(x_l\mid x_l \in N_D^+(x_1))$.  By \Cref{betti split}, we have $$HS_k(I(D))=x_2HS_k(I_1)+x_2HS_{k-1}(I_2)+HS_k(I_2).$$  By \Cref{vertex splittable}, we have $HS_1(I(D))$ has linear quotients. By \Cref{tree1}, we get $HS_k(I_2)$ has linear quotients for all $ k\geq 0$. Note that $HS_k(I_1)=0$ for all $k\geq 2$. Therefore, $HS_k(I)=x_2HS_{k-1}(I_2)+HS_k(I_2)$ for all $k\geq 2$. Then by \Cref{sum}, $HS_k(I)$ has linear quotients for all $k\geq2$.
\end{proof}

We are now ready to prove the following characterization of weighted oriented trees with homological linear quotients.

\begin{figure}[htbp]
\centering
\resizebox{0.85\textwidth}{!}{%
\begin{tikzpicture}[
    vertex/.style={circle,draw,fill=white,minimum size=3pt,inner sep=0pt},
    dot/.style={circle,fill=black,minimum size=1.2pt,inner sep=0pt},
    every node/.style={font=\Large},
    thick
]
\def\r{2.2}
\node[vertex] (v1) at (0,0) {};
\node[vertex,label=left:$x_2$]        (v2) at (180:\r) {};
\node[vertex,label=below left:$x_3$]  (v3) at (225:\r) {};
\node[vertex,label=below:$x_4$]       (v4) at (270:\r) {};
\node[vertex,label=below right:$x_5$] (v5) at (315:\r) {};
\node[vertex,label=right:$x_6$]       (v6) at (360:\r) {};
\node[vertex,label=below right:$x_7$] (v7) at (45:\r) {};
\node[vertex,label=above right:$x_8$] (v8) at (90:\r) {};
\node[vertex,label=above:$x_n$]       (vn) at (135:\r) {};
\foreach \x in {v2,v3,v4,v5,v6,v7,v8,vn}
    \draw (v1)--(\x);
\node[fill=white,inner sep=1pt] at ($(v1)+(22.5:1.15)$) {$x_1$};
\foreach \a in {102,112,122}
    \node[dot] at ($(v1)+(\a:0.95*\r)$) {};
\begin{scope}[xshift=9cm]
\node[vertex,label=below:$x_1$] (a1) at (0,0) {};
\node[vertex,label=below:$x_2$] (a2) at (1.8,0) {};
\node[vertex,label=below left:$x_3$] (a3) at (3.8,0) {};
\draw (a1)--(a2)--(a3);
\node[vertex,label=below:$x_4$]        (b4) at ($(a3)+(290:2.4)$) {};
\node[vertex,label=below right:$x_5$]  (b5) at ($(a3)+(325:2.4)$) {};
\node[vertex,label=right:$x_6$]        (b6) at ($(a3)+(350:2.4)$) {};
\node[vertex,label=right:$x_7$]        (b7) at ($(a3)+(15:2.4)$) {};
\node[vertex,label=above right:$x_8$]  (b8) at ($(a3)+(40:2.4)$) {};
\node[vertex,label=above:$x_n$]        (bn) at ($(a3)+(70:2.4)$) {};
\foreach \x in {b4,b5,b6,b7,b8,bn}
    \draw (a3)--(\x);
\foreach \a in {48,55,62}
    \node[dot] at ($(a3)+(\a:2.3)$) {};
\end{scope}
\end{tikzpicture}%
}
\caption{Star graph and broom graph}
\label{fig9}
\end{figure}

\begin{theorem}\label{thm:hmologicalshift:tree}
  Let $D$ be a weighted oriented tree with its underlying simple graph $G$. Then 
  $HS_k(I(D))$ has linear quotients for all $k\geq 0$ \;$\iff$  \;$G$ is a star graph or a broom graph as in \Cref{fig9} and $D_i$ are not induced subgraphs of $D$ for all $i\in \{1,2,5,6,8\}$.
\end{theorem}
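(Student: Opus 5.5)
For the forward direction I would assume $HS_k(I(D))$ has linear quotients for all $k\geq 0$. By \Cref{complement}, $D$ is $D_i$-free for all $i\in\{1,\ldots,8\}$, in particular for $i\in\{1,2,5,6,8\}$. The same corollary shows that $G$ is co-chordal. A tree is co-chordal exactly when it has no induced $2K_2$, since the complement of $2K_2$ is $C_4$ and a forest contains no induced $C_4$. So $G$ has no two edges at distance at least two, which forces $G$ to have diameter at most $3$. A tree of diameter at most $2$ is a star. A tree of diameter $3$ is a double star with centers $x_2,x_3$, and it must have no induced $2K_2$. If both centers carried a further leaf, say $x_1\sim x_2$ and $x_4\sim x_3$, then $x_1x_2$ and $x_3x_4$ would form an induced $2K_2$. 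Hence one center carries exactly one leaf, which is precisely the broom graph in \Cref{fig9}. This completes the forward direction.

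For the converse, the star case is \Cref{star}, because $D_1$- and $D_2$-freeness is part of the hypothesis. So the remaining work is the broom $G$ with path $x_1x_2x_3$ and leaves $x_4,\ldots,x_n$ attached to $x_3$. My plan is to write $I(D)$ as a vertex splitting and then apply \Cref{betti split} together with \Cref{sum}, as in the proof of \Cref{star}. The natural splitting variable is $x_1$ or $x_2$, depending on the orientation and weight of the edge $x_1x_2$. The $D_1,D_2$-freeness and \Cref{V+} restrict the weights on the star at $x_3$ heavily: at most one out-neighbour of $x_3$ lies in $V^+$, and the weights of $x_3$ and of the leaves are constrained as in the proof of \Cref{star}. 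The $D_5,D_6,D_8$-freeness constrains the path $x_1x_2x_3x_j$, which is exactly the shape of these forbidden graphs.

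The case analysis would run as follows.

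\textbf{Case (a):} $x_1$ is a leaf with $w_1=1$ (or $x_1$ is a source). Split on $x_1$, giving $I=x_1(x_2^{w_2}\text{ or }x_2)+I(D\setminus x_1)$. Here $D\setminus x_1$ is a star, and $I(D\setminus x_1)$ has the form handled by \Cref{tree1} or by the sink case of \Cref{star}.

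\textbf{Case (b):} $(x_2,x_1)\in E(D)$ with $w_1>1$. Split on $x_2$, so that $I_1=(x_1^{w_1},x_3^{\ast})$ and $I_2=I(\text{star at }x_3)$. Here $D_8$-freeness and $D_2$-freeness should force the weights near $x_3$ to be $1$.

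In each case, \Cref{betti split} gives
\[HS_k(I)=x\bigl(HS_k(I_1)+HS_{k-1}(I_2)\bigr)+HS_k(I_2).\]
Here $HS_k(I_1)$ is a principal or zero ideal, or is handled by \Cref{tree} since $I_1$ has the shape $(x_{i_1},\ldots,x_{i_r}^{w})$. By \Cref{sum}, it suffices to prove that $HS_k(I_1)+HS_{k-1}(I_2)$ has linear quotients and contains $HS_k(I_2)$, with the minimality condition $\mathcal{G}(xJ)\subseteq\mathcal{G}(HS_k(I))$.

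The main obstacle is that $HS_k(I_1)+HS_{k-1}(I_2)$ is a sum of two ideals with linear quotients, which need not have linear quotients itself. I would handle this as in \Cref{wl-one} and \Cref{wl-two}. First, write $HS_{k-1}(I_2)$ explicitly using \Cref{L-homologicalshift}; since $I_2$ is a star ideal, this is $x_3^{\ast}$ times a squarefree Veronese-type ideal in the leaves, up to the single weighted variable. Then order the generators of $HS_k(I_1)$ first and those of $HS_{k-1}(I_2)$ after them. Finally, check the colon ideals case by case, using the forbidden subgraphs $D_5,D_6,D_8$ to rule out the bad configurations, exactly as $D_6$ and $D_8$ were used in \Cref{colon1}. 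If this direct check becomes unwieldy, I would instead show that the sum is weakly polymatroidal under a suitable variable order, as in \Cref{tree}.
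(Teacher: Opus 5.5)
\textbf{Forward direction.} The step ``if both centers carried a further leaf, say $x_1\sim x_2$ and $x_4\sim x_3$, then $x_1x_2$ and $x_3x_4$ would form an induced $2K_2$'' is false. Since $x_2x_3$ is an edge, $G[\{x_1,x_2,x_3,x_4\}]$ is a path, not $2K_2$.

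In fact every double star is $2K_2$-free, hence co-chordal. For the double star with leaves $a_1,a_2$ at $a$ and $b_1,b_2$ at $b$, the complement is a $K_4$ on the leaves together with the two simplicial vertices $a,b$. So co-chordality gives only diameter at most $3$, and it cannot distinguish brooms from arbitrary double stars.

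The missing ingredient is the other half of \Cref{complement}: $G$ is $H_n^c$-free. The graph $H_6^c$ has edges $x_1x_3,x_1x_4,x_1x_5,x_3x_2,x_3x_6$. So it is exactly the double star with two leaves at each center, and any double star with at least two leaves at each center contains it as an induced subgraph. That is what forces the broom. The obstruction is real: for that double star, $HS_2(I(G))=(aba_1a_2,\,abb_1b_2)$, which has no linear quotients although $I(G)$ does. The paper uses both co-chordality and $H_6^c$-freeness at this point.

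\textbf{Converse.} Your Case (a) is not a vertex splitting. \Cref{spl1} requires $I_2\subseteq I_1$, which for edge ideals means the neighbourhood of the splitting variable must be a vertex cover. Here $N_D(x_1)=\{x_2\}$, and $I(D\setminus x_1)$ contains $x_3x_j\notin(x_2)$. So \Cref{betti split} does not apply, and the formula genuinely fails. For the unweighted broom $P_4$ it would put $x_1x_3x_4$ into $HS_1$ and $x_1x_2x_3x_4$ into $HS_2$. In fact $HS_1(I(P_4))=(x_1x_2x_3,x_2x_3x_4)$ and $HS_2(I(P_4))=0$.

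On a broom only $x_2$ and $x_3$ can serve as splitting variables. Case (a) contains the unweighted case and most weighted configurations, so the converse is essentially unproved. Your Case (b), splitting on $x_2$, is a legitimate splitting, but its weight reductions are only asserted (``should force'').

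The paper proceeds as follows.
\begin{enumerate}
\item It uses $D_1,D_2,D_5,D_6,D_8$-freeness, case by case according to which vertex has weight $>1$, to show that at most one vertex is heavy and to fix the orientations around it.
\item In most cases it splits on $x_3$, whose neighbourhood is a vertex cover, so that $I_2$ is the principal ideal of the $x_1x_2$-edge. Then $HS_k(I)=x_3HS_k(I_1)$ for $k\ge 2$, with $I_1$ of the form in \Cref{tree}.
\item When $w_3>1$ it splits on $x_2$ with $I_1$ two-generated. Then $HS_k(I_1)=0$ for $k\ge 2$ and $HS_k(I)=x_2HS_{k-1}(I_2)+HS_k(I_2)$, with $I_2$ a star ideal handled by \Cref{tree} and \Cref{tree1}. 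It concludes via \Cref{sum}, using $HS_k(I_2)\subseteq HS_{k-1}(I_2)$.
\item $HS_1$ comes from \Cref{vertex splittable}; $D_7$ is a triangle, so it never occurs in a tree.
\end{enumerate}
With the right splitting variable, one summand vanishes for $k\ge 2$. So the ``main obstacle'' you anticipate, checking colons in $HS_k(I_1)+HS_{k-1}(I_2)$ one by one, never arises.
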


\begin{proof}
Suppose $HS_k(I(D))$ has linear quotient property for all $k\geq0$. Then by \cref{complement}, we get that $G$ is a tree with $H_6^c$-free and co-chordal. Since $G$ is co-chordal, then $G$ can not have a path of length $ \geq 4$. This leads to that $G$ is same as \Cref{fig9}. 
\vskip 0.2cm 
\noindent 
Conversely, suppose $D$ is a weighted oriented tree such that $D_i$ are not induced subgraphs of $D$ for all $i\in \{1,2,5,6\}$ and $G$ is as in \Cref{fig9}. We will show that $HS_k(I(D))$ has linear quotients for all $k\geq 0$. If $G$ is a star graph, then by \Cref{star}, we have $I(D)$ has homological linear quotients, as required. Assume  $G$ is the broom graph as in \Cref{fig9}. We divide the proof in the following cases:
\vskip 0.2cm    
\noindent  
\textbf{Case 1:} Suppose $w_i=1$ for all $i$. That is, $I(D)=I(G)$. In this case, we can write $I(D)=x_3(x_2,x_4,\ldots,x_n)+(x_2x_1)$ which is a vertex splitting of $I(D)$. By \Cref{vertex splittable}, we have $HS_1(I(D))$ has linear quotient property. Assume $k\geq 2$. By \Cref{betti split}, we have
\begin{align*}
    HS_k(I(D))=x_3 HS_k((x_2,x_4,\ldots,x_n))+x_3HS_{k-1}((x_2x_1))+HS_k((x_2x_1)),~\text{for all}~ k\geq 2.
\end{align*}
Since  $HS_k((x_2x_1))=0$ for all $k \geq 2$, this implies that  $HS_k(I(D))=x_3HS_k((x_2,x_4,\ldots,x_n))$. By \Cref{tree}, $HS_k((x_2,x_4,\ldots,x_n))$ has linear quotients for all $k\geq 2$. Therefore $HS_k(I(D)$ has linear quotients for all $k\geq 0$. 
\vskip 0.2cm    
\noindent  
\textbf{Case 2:} Suppose $w_1>1$, then $x_1$ is not a source vertex. This implies that $(x_2,x_1)\in E(D)$. If $w_2>1$, then $(x_3,x_2)\in E(D)$ and the induced subgraph $D[\{x_1,x_2,x_3\}]$ is same as $D_1$, which is a contradiction. Therefore $w_2=1$. If $w_3>1$, then $x_3$ is not a source vertex. This implies that $(x_i,x_3)\in E(D)$, for some $i\in \{2, 4,\ldots,n\}$. If $i=2$, then the induced subgraph $D[\{x_1,x_2,x_3\}]$ is same as $D_2$, which is a contradiction. If $i\neq 2$, then the induced subgraph $D[\{x_1,x_2,x_3,x_i\}]$ is same as $D_6$, which is a contradiction. Therefore, $w_3=1$. Assume $w_j>1$ for some $j\in \{4,\ldots,n\}$, then $x_j$ is not a source vertex. This implies that $(x_3,x_j)\in E(D)$ and the induced subgraph $D[\{x_1,x_2,x_3,x_j\}]$ is same as $D_8$, which is a contradiction. This implies that $w_j=1$, for all $j\in\{4,\ldots,n\}$. 

Thus we have $I(D)=x_3(x_2,x_4,\ldots,x_n)+(x_2x_1^{w_1})$ which is a vertex splitting of $I(D)$. By \Cref{betti split}, we have
    $$HS_k(I(D))=x_3HS_k((x_2,x_4,\ldots,x_n))+x_3HS_{k-1}((x_2x_1^{w_1}))+HS_k((x_2x_1^{w_1})).$$ 
By \Cref{vertex splittable}, we have $HS_1(I(D))$ has linear quotients. Since  $HS_k((x_2x_1^{w_1}))=0$ for all $k>1$, then $HS_k(I(D))=x_3HS_k((x_2,x_4,\ldots,x_n))$. By \Cref{tree}, $HS_k((x_2,x_4,\ldots,x_n))$ has linear quotients. Therefore $HS_k(I(D)$ has linear quotients for all $k\geq 0$.
\vskip 0.2cm   
\noindent
\textbf{Case 3:} Suppose $w_2>1$. Then $x_2$ is not a source vertex. This gives that  $(x_1,x_2)\in E(D)$ or $(x_3,x_2)\in E(D)$. Assume $w_1>1$. Then $x_1$ is not a source vertex. If $(x_1,x_2)\in E(D)$, then we get that $x_1$ is source vertex, which is a contradiction. If $(x_3,x_2)\in E(D)$ then the induced subgraph $D[\{x_1,x_2,x_3\}]$ is same as $D_1$, which is a contradiction. Hence $w_1=1$. Assume $w_3>1$. Then we derive a contradiction. Note that $x_3$ is not a source vertex. This implies that $(x_i,x_3)\in E(D)$, for some $i\in \{2, 4,\ldots,n\}$.  Suppose $(x_1,x_2)\in E(D)$. If $i=2$, then the induced subgraph $D[\{x_1,x_2,x_3\}]$ is same as $D_1$, which is a contradiction. If $i\neq 2$, then the induced subgraph $D[\{x_1,x_2,x_3,x_i\}]$ contains $D_1$ or $D_2$, which is a contradiction. Suppose $(x_3,x_2)\in E(D)$. Then the induced subgraph  $D[\{x_2,x_3,x_i\}]$ is same as $D_1$, which is a contradiction. Therefore, $w_3=1$.

Now we will show that $w_i=1$, for all $i\in \{4,\ldots,n\}$. Suppose $w_i>1$, for some $i\in \{4,\ldots,n\}$. Then $x_i$ is not a source vertex. This gives that $(x_3,x_i)\in E(D)$. Since $w_2>1$, we have  either $(x_1,x_2)\in E(D)$ or $(x_3,x_2)\in E(D)$. If $(x_1,x_2)\in E(D)$ then the induced subgraph $D[\{x_1,x_2,x_3,x_i\}]$ is same as $D_6$, which is a contradiction. If $(x_3,x_2)\in E(D)$, then the induced subgraph $D[\{x_2,x_3,x_i\}]$ is same as $D_2$, which is a contradiction. Therefore $w_i=1$ for all $i\in \{4,\ldots,n\}$. Therefore,
\[
I(D)=
\begin{cases}
x_3(x_2^{w_2},x_4,\ldots,x_n)+(x_1x_2^{w_2}), & \text{if } (x_1,x_2)\in E(D) ~\text{and}~ (x_3,x_2)\in E(D),\\[1mm]
x_3(x_2,x_4,\ldots,x_n)+(x_1x_2^{w_2}), & \text{if } (x_1,x_2)\in E(D) ~\text{and}~ (x_2,x_3)\in E(D),\\[1mm]
x_3(x_2^{w_2},x_4,\ldots,x_n)+(x_1x_2), & \text{if } (x_3,x_2)\in E(D) ~\text{and}~ (x_2,x_1)\in E(D).
\end{cases}
\]
Each of these decompositions is a vertex splitting of $I(D)$, say $I(D)=x_3 I_1+I_2$. Then by \Cref{betti split}, we have $HS_k(I(D))=x_3HS_k(I_1)+x_3HS_{k-1}(I_2)+HS_k(I_2)$. By \Cref{vertex splittable}, we have $HS_1(I(D))$ has linear quotients. Since  $HS_k(I_2)=0$ for all $k\geq 2$, then we get $HS_k(I(D))=x_3HS_k(I_1)$ for all $k\geq 2$. By \Cref{tree}, we have $HS_k(I_1)$ has linear quotients. Therefore $HS_k(I(D))$ has linear quotients for all $k\geq 2$.
\vskip 0.2cm 
\noindent
\textbf{Case 4:} Suppose $w_3>1$. Then $x_3$ is not a source vertex. This gives that  $(x_i,x_3)\in E(D)$ for some $i\in \{2,4,\ldots,n\}$. Now, we will have the following subcases:
\vskip 0.2cm
\noindent
\textbf{Subcase 4(a):}  Suppose $i=2$, i.e., $(x_2,x_3)\in E(D)$. If $w_1>1$, then the induced subgraph $D[\{x_1,x_2,x_3\}]$ is same as $D_2$, which is a contradiction. Therefore $w_1=1$. If $w_2>1$. Then $x_2$ is not a source vertex. Then $(x_1,x_2)\in E(D)$, which gives that the induced subgraph $D[\{x_1,x_2,x_3\}]$ is same as $D_2$, which is a contradiction. Therefore $w_2=1$. If $w_j>1$, for some $j\in\{4,\ldots,n\}$, then the induced subgraph $D[\{x_2,x_3,x_j\}]$ is same as $D_1$, which is a contradiction. Thus, $w_j=1$ for all $j\in \{4,\ldots,n\}$. Consequently, $(x_j,x_3)\in E(D)$ for all $j\in \{4,\ldots,n\}$, otherwise, if $(x_3,x_j)\in E(D)$ implies that the induced subgraph $N_D[\{x_1,x_2,x_3,x_j\}]$ is same as $D_5$, which is a contradiction. Therefore, we can write $I(D)=x_2I_1+I_2$, which is vertex splitting of $I(D)$, where $I_1=(x_1,x_3^{w_3})$ and $I_2=x_3^{w_3}(x_4,\ldots,x_n)$. By \Cref{betti split}, we have 
       $$HS_k(I(D))=x_2HS_k(I_1)+x_2HS_{k-1}(I_2)+HS_k(I_2).$$
By \Cref{vertex splittable}, we get that $HS_1(I)$ has linear quotients. Note that $HS_k(I_1)=0$ for all $k>1$. Therefore, $HS_k(I)=x_2HS_{k-1}(I_2)+HS_k(I_2)$ for all $k\geq 2$. By \cref{tree}, $HS_k(I_2)$ has linear quotients for all $k>1$. Then by \Cref{sum}, $HS_k(I)$ has linear quotients for all $k>1$. 

\noindent
\textbf{Subcase 4(b):} Suppose $i\neq2$. Then $i\in \{4,\ldots,n\}$ and $(x_i,x_3)\in E(D)$. If $w_1>1$, then $D[\{x_1,x_2,x_3,x_i\}]$ is same as $D_6$, which is a contradiction. Hence $w_1=1$. If $w_j>1$ for some $j\in\{4,\ldots,n\}\setminus\{i\}$. Then $x_j$ not a source vertex. This gives that $(x_3,x_j)\in E(D)$. Then the induced subgraph $N_D[\{x_i,x_3,x_j\}]$ is same as $D_1$, which is a contradiction. Therefore $w_j=1$ for all $j\in\{4,\ldots,n\}\setminus\{i\}$. If $w_2>1$, then the induced subgraph $D[\{x_i,x_3,x_2\}]$ is same as $D_2$, which is a contradiction. Hence $w_2=1$. If $(x_2,x_3)\in E(D)$, then it falls Subcase-$4(a)$. Now assume $(x_3,x_2)\in E(D)$. Then $I(D)=x_2I_1+I_2$ is a vertex-splitting of $I(D)$, where $I_1=(x_1,x_3)$ and $I_2=x_3^{w_3}(x_l\mid x_l \in N_D^-(x_3))+x_3(x_l\mid x_l \in N_D^+(x_3)\setminus\{x_2\})$.  By \Cref{betti split}, we have $$HS_k(I(D))=x_2HS_k(I_1)+x_2HS_{k-1}(I_2)+HS_k(I_2).$$  By \Cref{vertex splittable}, we have $HS_1(I(D))$ has linear quotients. By \Cref{tree1}, we get $HS_k(I_2)$ has linear quotients for all $ k\geq 0$. Note that $HS_k(I_1)=0$ for all $k\geq 2$. Therefore, $HS_k(I)=x_2HS_{k-1}(I_2)+HS_k(I_2)$ for all $k\geq 2$. Then by \Cref{sum}, $HS_k(I)$ has linear quotients for all $k\geq2$. 

\noindent
\textbf{Case 5:} Suppose $w_i>1$ for some $i\in \{4,\ldots,n\}$. Since $x_i$ is not a source vertex, then we must have $(x_3,x_i)\in E(D)$. If $w_1>1$, then the induced subgraph $D[\{x_1,x_2,x_3,x_i\}]$ is same as $D_8$, which is a contradiction. Hence $w_1=1$. If $w_j>1$ for some $j\in\{4,\ldots,n\}\setminus\{i\}$. Then $x_j$ not a source vertex. This gives that $(x_3,x_j)\in E(D)$. Then the induced subgraph $N_D[\{x_i,x_3,x_j\}]$ is same as $D_2$, which is a contradiction. If $w_2>1$, then $(x_1,x_2)\in E(D)$ or $(x_3,x_2)\in E(D)$. If $(x_1,x_2)\in E(D)$ (respectively $(x_3,x_2)\in E(D)$) then the induced subgraph $D[\{x_1,x_2,x_3,x_i\}]$ (respectively $D[\{x_2,x_3,x_i\}]$ ) is same as $D_6$(respectively $D_2$), which is a contradiction. Hence $w(x_2)=1$. If $w_3>1$, then the induced subgraph $D[\{x_2,x_3,x_i\}]$ is same as $D_1$, which is contradiction. Hence $w_3=1$.  If $w(x_j)>1$, for $j\in \{4,\ldots,n\}\setminus \{i\}$ then the induced subgraph $D[\{x_i,x_3,x_j\}]$ is same as $D_2$, which is a contradiction.  Thus, $w_j=1$ for all $j\neq i$. Therefore, $I(D)=x_3I_1+I_2$ is a vertex-splitting of $I(D)$, where $I_1=(x_2,x_4,\ldots x_i^{w_i},\ldots x_n)$ and $I_2=(x_1x_2)$. Then by \Cref{betti split}, we have $HS_k(I(D))=x_3HS_k(I_1)+x_3HS_{k-1}(I_2)+HS_k(I_2)$. By \Cref{vertex splittable}, we have $HS_1(I(D))$ has linear quotients. Since  $HS_k(I_2)=0$ for all $k\geq 2$, then we get $HS_k(I(D))=x_3HS_k(I_1)$ for all $k\geq 2$. By \Cref{tree}, we have $HS_k(I_1)$ has linear quotients. Therefore $HS_k(I(D))$ has linear quotients for all $k\geq 2$.      
\end{proof}

\bibliographystyle{plain}
\bibliography{reference}
\end{document}